\newtheorem{theorem}{Theorem}[section]
\newtheorem{ques}[theorem]{Question}
\newtheorem{lemma}[theorem]{Lemma}
\newtheorem{cor}[theorem]{Corollary}
\newtheorem{conjecture}[theorem]{Conjecture}
\newtheorem{prop}[theorem]{Proposition}
\newtheorem{fact}[theorem]{Fact}
\newtheorem{claim}[theorem]{Claim}
\theoremstyle{definition}
\theoremstyle{definition}
\theoremstyle{definition}
\theoremstyle{definition}
\theoremstyle{definition}
\theoremstyle{definition}
\theoremstyle{definition}
\theoremstyle{definition}
\newcommand{\ep}{\varepsilon}
\newcommand{\eps}{\varepsilon}
\newcommand{\de}{\delta}
\newcommand{\ga}{\gamma}
\newcommand{\Ga}{\Gamma}
\newcommand{\De}{\Delta}
\newcommand{\cM}{\mathcal{M}}
\newcommand{\cF}{\mathcal{F}}
\newcommand{\cI}{\mathcal{I}}
\newcommand{\mis}{\mathrm{MIS}}
\newcommand{\fm}{f_{\max}}
\newcommand{\msf}{\mathrm{MSF}}
\def\COMMENT#1{}
\let\COMMENT=\footnote% COMMENT OUT for clean output
\title{Sharp bound on the number of maximal sum-free subsets of integers}
\author{
 J\'ozsef Balogh
 \thanks{Department of Mathematical Sciences,
 University of Illinois at Urbana-Champaign, Urbana, Illinois 61801, USA {\tt
jobal@math.uiuc.edu}. Research is  partially supported by NSF Grant DMS-1500121 and an Arnold O. Beckman Research Award (UIUC Campus Research Board 15006).
}
 \quad
 Hong Liu
 \thanks{Mathematics Institute,
 University of Warwick, United Kingdom, {\tt
h.liu.9@warwick.ac.uk}. Research is supported by the Leverhulme Trust Early Career Fellowship~ECF-2016-523.}
 \quad
 Maryam Sharifzadeh
 \thanks{Mathematics Institute,
 	University of Warwick, United Kingdom,  {\tt
m.sharifzadeh@warwick.ac.uk}. Research is supported by the European Unions Horizon 2020 research and innovation programme under the Marie Curie Individual Fellowship No.~752426.}
\quad
Andrew Treglown
\thanks{School of Mathematics, University of Birmingham, United Kingdom, {\tt 
a.c.treglown@bham.ac.uk}. Research is supported by EPSRC grant EP/M016641/1.}
}
\begin{document}
\maketitle

\begin{abstract}
Cameron and Erd\H{o}s~\cite{CE} asked whether  the number of \emph{maximal} sum-free sets in $\{1, \dots , n\}$ is much smaller than the  number of sum-free sets. In the same paper they gave a lower bound of $2^{\lfloor n/4 \rfloor }$ for the number of maximal sum-free sets. Here, we prove the following: For each $1\leq i \leq 4$, there is a constant $C_i$ such that, given any $n\equiv i \mod 4$,
$\{1, \dots , n\}$ contains  $(C_i+o(1)) 2^{n/4}$ maximal sum-free sets.
 Our proof makes use of  container and removal lemmas of Green~\cite{G-CE, G-R},  a structural result of Deshouillers, Freiman, S\'os and Temkin~\cite{DFST} and a recent bound on the number of subsets of integers with small sumset by Green and Morris~\cite{GM}. We also discuss related results and open problems on the number of maximal sum-free subsets of abelian groups.
\end{abstract}

%%%%%%%%%%%%%%%%%%%%%%%%%%%%%%%%%%%%%%%%%%%%%%%%%%%%%%%%%%%%%%%%%%%%%%%
%%%%%%%%%%%%%%%%%%%%%%%%%%%%%%%%%%%%%%%%%%%%%%%%%%%%%%%%%%%%%%%%%%%%%%%

\section{Introduction}

A triple $x,y,z$ is a \emph{Schur triple} if $x+y=z$ (note $x,y$ and $z$ may not necessarily be distinct).
A set $S$ is \emph{sum-free} if $S$ does not contain a Schur triple. Let $[n]:=\{1,\dots,n\}$. We say that $S\subseteq [n]$ is a \emph{maximal sum-free subset of $[n]$} if it is sum-free and it is not properly contained in another sum-free subset of $[n]$. Let $f(n)$ denote the number of sum-free subsets of $[n]$ and $f_{\max}(n)$ denote the number of maximal sum-free subsets of $[n]$. The study of sum-free sets of integers has a rich history. Clearly, any set of odd integers and any subset of $\{\lfloor n/2 \rfloor +1, \dots , n\}$ is a sum-free set, hence $f(n)\ge 2^{n/2}$. Cameron and Erd\H{o}s~\cite{cam1} conjectured that $f(n)=O(2^{n/2})$. In fact, they conjectured the stronger statement that $f(n)/2^{n/2}$ tends to two different constants depending on the parity of $n$.\footnote{The constants for odd (resp. even) $n$ is approximately $6.8$ (resp. 6.0).} This conjecture was proven independently by Green~\cite{G-CE} and Sapozhenko~\cite{sap}. Indeed, they showed that there are constants $C_1$ and $C_2$ such that $f(n)=(C_i+o(1))2^{n/2}$ 
for all $n \equiv i \mod 2$.

In a second paper, Cameron and Erd\H{o}s~\cite{CE} observed that $f_{\max} (n) \geq 2^{\lfloor n/4 \rfloor }$. Noting that all the sum-free subsets of $[n]$ described above lie in just two maximal sum-free sets, they asked whether $f_{\max} (n) = o(f(n))$ or even $f_{\max} (n) \leq f(n)/ 2^{\ep n}$ for some constant $\ep >0$. \L{u}czak and Schoen~\cite{ls} answered this question in the affirmative, showing that $f_{\max} (n) \leq 2^{n/2-2^{-28}n}$ for sufficiently large $n$. Later, Wolfovitz~\cite{wolf} proved that $f_{\max} (n) \leq 2^{3n/8+o(n)}$. More recently, the authors~\cite{BLST} proved that the lower bound is essentially tight, proving that $f_{\max}(n)=2^{(1/4+o(1))n}$. In this paper we give the following exact solution to the problem. 

%\begin{theorem}\label{thm-main}
%There are $O(2^{n/4})$ maximal sum-free sets in $[n]$. That is,
%$$f_{\max}(n)=O(2^{n/4}).$$
%\end{theorem} 

\begin{theorem}\label{thm-main}
For each $1\leq i \leq 4$, there is a constant $C_i$ such that, given any $n\equiv i \mod 4$,
$[n]$ contains  $(C_i+o(1)) 2^{n/4}$ maximal sum-free sets.
\end{theorem}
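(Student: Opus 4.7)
The plan is to sharpen the approach of \cite{BLST}, which established $f_{\max}(n)=2^{(1/4+o(1))n}$, to extract the exact multiplicative constants. First, I would apply Green's container lemma \cite{G-CE} to cover the family of all sum-free subsets of $[n]$ by a polynomial (in $n$) number of containers, each with $o(n^2)$ Schur triples. By Green's removal lemma \cite{G-R}, each such container is close to a sum-free set, and by the Deshouillers--Freiman--S\'os--Temkin structural result \cite{DFST} each (large) container is essentially of one of the following types: (i) a subset of the odd integers in $[n]$; (ii) a subset of an interval $(n/2,n]$ (or a small perturbation thereof); (iii) a subset of a small collection of arithmetic progressions with odd common difference (small doubling type). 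Every maximal sum-free set lies in at least one container, so the count splits by type.

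The main term should come from maximal sum-free sets $M$ whose ``upper half'' $M\cap (n/2,n]$ consists of odd integers in $(n/2,n]$, of which there are exactly $\lceil n/4\rceil$ or $\lfloor n/4\rfloor$ depending on $n\bmod 4$. For each subset $T$ of this set I would count how many distinct maximal sum-free sets $M\subseteq[n]$ have $T$ as their odd upper part: maximality forces a specific behavior on $[1,n/2]$ and on even elements of $(n/2,n]$, since any element not creating a Schur triple with $T$ must be added. A careful case analysis driven by the four residue classes $n\bmod 4$ (which governs the parity of $\lfloor n/2\rfloor$ and the exact size of the odd numbers in $(n/2,n]$), combined with inclusion--exclusion to avoid double counting when the same $M$ appears in multiple containers, yields the precise asymptotic $(C_i+o(1))2^{n/4}$. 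The matching lower bound refines the Cameron--Erd\H{o}s construction by producing $(C_i+o(1))2^{n/4}$ genuinely distinct maximal extensions.

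For the residual contribution, maximal sum-free sets lying in containers of type (iii) are controlled by the Green--Morris \cite{GM} bound on the number of subsets of $[n]$ with small sumset: such containers are so constrained that the number of sum-free subsets they contain is $2^{o(n)}\cdot 2^{n/4}/\omega(1)$, which is $o(2^{n/4})$. Containers of type (i) that do not fall into the main-term category (i.e.\ whose intersection with $(n/2,n]$ is not the full set of odd numbers there) are forbidden by the maximality condition from being large enough to contribute, because one of the ``missing'' upper odd elements together with the small elements forced in must create a Schur triple. The principal obstacle is the delicate Stage~2 enumeration: determining, for each $T\subseteq \{\text{odd integers in }(n/2,n]\}$, the exact number of distinct maximal extensions requires understanding which even numbers in $(n/2,n]$ and which small elements of $[1,n/2]$ can be adjoined, and this depends on arithmetic coincidences that behave differently in each residue class modulo $4$. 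Controlling these coincidences with asymptotic precision is where the bulk of the technical work lies.
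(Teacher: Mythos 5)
Your high-level plan (containers + removal + DFST structure, then count inside each container type and use Green--Morris for the hard residual case) is the right scaffolding and matches the paper, but the identification of where the main term comes from is wrong, and this is a genuine gap, not just a different route.

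The DFST classification gives \emph{two} large container types, not three: (b) ``interval'' containers where $F$ is a small perturbation of a sum-free subset of $[(1/2-\gamma)n,n]$, and (c) ``odd'' containers where $F$ is a small perturbation of a subset of $O$. (Your ``small-doubling/AP'' type is not one of DFST's categories; in the paper Green--Morris enters later, inside the analysis of type~(b), to control the case where the second-smallest element $b$ of $M$ is bounded away from $n/2$ — see Lemma~\ref{lem-int1}.) The crucial point you miss is that the type~(b) interval containers contribute a main term comparable to the odd ones: the paper proves $C_i = D_i + D'_i$ where $D_i$ (Lemma~\ref{lem-intmain}) is the constant coming from maximal sets $M$ with $\min(M)\approx n/4$ and $\min_2(M)\approx n/2$, and $D'_i$ (Lemma~\ref{lem:count}) is the constant from maximal sets with exactly one even element. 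Your proposal accounts only for the latter. The interval lower-bound construction of Section~\ref{extremal} (take $n/4$, choose $S'\subseteq\{3n/4+1,\dots,n\}$, and add $x-n/4$ for each $x$ not chosen) already gives $2^{n/4}$ maximal sum-free sets, essentially none of which have an ``odd upper half'' — so any counting scheme that declares these to be negligible or forbidden by maximality is losing roughly half of $C_i$.

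A secondary issue: even within the odd containers, the paper does \emph{not} restrict the main term to sets whose intersection with $(n/2,n]$ is the full set of odd integers there. Instead it counts $\sum_{x\in E}\mathrm{MIS}(L_x[O])$ (Lemmas~\ref{boundingf'},~\ref{lem:count}): for each even $m$, the link graph $L_m[O]$ is shown to decompose into $P_3$'s and matching edges, giving $\approx 2^{m/4}$ maximal independent sets, and one sums over $m$. The odd part of such an $M$ is an arbitrary maximal independent set in $L_m[O]$, not forced to be ``all odds in $(n/2,n]$.'' What is shown to be small is the number of maximal sum-free sets in type~(c) containers that contain \emph{two or more} even numbers (Lemma~\ref{lem-odd}), which is a different dichotomy from the one you propose. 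Finally, a correct argument also has to rule out double-counting between types~(b) and~(c), which the paper does in Lemmas~\ref{noovercount} and~\ref{noovercount1}; your proposal gestures at inclusion--exclusion but without first identifying both sources of the main term there is nothing to deduplicate.
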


We remark that for the constants $C_i$ can also be computed up to any additive error (say $\eps$) in constant time (i.e.~depending only on $\eps$). We refer the reader to Section~\ref{explain} (and the remarks after Lemma~\ref{lem-intmain}) for more details.
The proof of Theorem~\ref{thm-main} is given in Section~\ref{sec-main}, with the main work arising in Section~\ref{subsec-interval}.
The proof draws on a number of ideas from~\cite{BLST}. In particular, as in~\cite{BLST} we 
 make use of `container' and `removal' lemmas of Green~\cite{G-CE, G-R} as well as a  result of Deshouillers, Freiman, S\'os and Temkin~\cite{DFST} on the structure of sum-free sets. 
Our work also has parallels with recent developments on maximal triangle-free graphs~\cite{BLPS,sar} (see the introduction in~\cite{BLPS} for a discussion on this).

Despite these connections, the  details of these proofs are actually significantly different to the proof of Theorem~\ref{thm-main}. In particular, as described in Section~\ref{21}, the container method is naturally set up to yield an error term in the exponent when computing $f_{\max} (n)$. Thus,
in order to avoid over-counting the number of maximal sum-free subsets of $[n]$,
our present proof  develops a number of new ideas, thereby making the argument substantially more involved.
We use a bound on the number of subsets of integers with small sumset by Green and Morris~\cite{GM} as well as several new bounds on the number of maximal independent sets in various graphs.
Further, the proof provides  information about the typical structure of the maximal sum-free subsets of $[n]$. Indeed, we show that almost all of the maximal sum-free subsets of $[n]$ look like one of two particular extremal constructions (see Section~\ref{23} for more details).

Our main result is an example of an \emph{enumeration} problem. This area has a long history. In particular, in the context of graph theory, the study was initiated by Erd\H{o}s, Kleitman and Rothschild~\cite{ekr} who (up to an error term in the exponent) determined the number of $K_r$-free graphs on $n$ vertices. Since then, a number of tools have been developed for attacking such problems. However, progress on enumeration problems for sum-free sets has been slower. Indeed, as mentioned above, it took nearly 15 years for the conjecture of Cameron and Erd\H{o}s on the number of sum-free subsets of $[n]$ to be fully resolved. We believe that our methods  are likely to provide insight for attacking related problems. For example, in Section~\ref{group} we state several open problems on the number of maximal sum-free subsets of abelian groups.

In Section~\ref{overview} we give an overview of the proof and highlight the new ideas that we develop. We state  some useful results in Section~\ref{tools} and prove Theorem~\ref{thm-main} in Section~\ref{sec-main}.
%In Section~\ref{group} we give some results and open problems on the number of maximal sum-free subsets of abelian groups.

\section{Background and an overview of the proof of Theorem~\ref{thm-main}}\label{overview}
\subsection{Independence and container theorems}\label{21}
An exciting recent development has been the emergence of `independence' providing a framework to study a plethora of problems arising in combinatorics, geometry, number theory and probability as well as at the interfaces of such areas. 
%This notion has transformed the study of many topics at the interface of algebra, combinatorics and number theory. 
To be more precise, let $V$ be a set and $\mathcal E$ a collection of subsets of $V$. We say that a subset $I$ of $V$ is an \emph{independent set} if $I$ does not contain any element of $\mathcal E$ as a subset. 
For example, if $V:=[n]$ and  $\mathcal E$ is the collection of all Schur triples in $[n]$ then an independent set $I$ is simply a sum-free set. 
It is often helpful to think of $(V,\mathcal E)$ as a hypergraph with vertex set $V$ and edge set $\mathcal E$; thus an independent set $I$ corresponds to an independent set in the hypergraph.

So-called `container results' have  emerged as a powerful tool for attacking many problems that concern counting independent sets. Roughly speaking, container results state that the independent sets of a given hypergraph $H$ lie only in a `small' number of subsets of the vertex set of $H$ (referred to as \emph{containers}), where each of these containers is an `almost independent set'.  Balogh, Morris and Samotij~\cite{BMS} and independently Saxton and Thomason~\cite{ST},  proved  general container theorems  for hypergraphs whose edge distribution satisfies  certain boundedness conditions. 

In the proof of Theorem~\ref{thm-main} we will apply the following container theorem of Green~\cite{G-CE}.
\begin{lemma}[Proposition 6 in~\cite{G-CE}]\label{lem-container}
There exists a family $\cF$ of subsets of $[n]$ with the following properties.

(i) Every member of $\cF$ has at most $o(n^2)$ Schur triples.

(ii) If $S\subseteq [n]$ is sum-free, then $S$ is contained in some member of $\cF$.

(iii) $|\cF|=2^{o(n)}$.

(iv) Every member of $\cF$ has size at most $(1/2+o(1))n$.
\end{lemma}
We refer to the sets in $\cF$ as \emph{containers}. 

In~\cite{BLST} we used Lemma~\ref{lem-container} to prove that $f_{\max}(n)=2^{(1+o(1))n/4}.$ Indeed, we showed that every  $F \in \mathcal F$ contains at most
$2^{(1+o(1))n/4}$ maximal sum-free subsets of $[n]$ which by (ii) and (iii) yields the desired result. 
To obtain an exact bound on $f_{\max}(n)$ it is not sufficient to give a tight general bound on the number of maximal sum-free subsets of $[n]$ that lie in a container $F \in \mathcal F$.
Indeed, such an $F \in \mathcal F$ could contain $O(2^{n/4})$ maximal sum-free subsets of $[n]$, and thus together with (iii) this still gives an error term in the exponent.
In general, since containers may overlap,  applications of container results may lead to `over-counting'.

We therefore need to count the number of maximal sum-free subsets of $[n]$ in a more refined way. To explain our method, we first need to describe the constructions which imply that 
 $f_{\max} (n) \geq 2^{\lfloor n/4 \rfloor }$.

\subsection{Lower bound constructions}\label{extremal}
The following construction of Cameron and Erd\H{o}s~\cite{CE} implies that $f_{\max} (n) \geq 2^{\lfloor n/4 \rfloor }$. Let $n \in \mathbb N$ and 
 let $m=n$ or $m=n-1$, whichever is even. Let $S$ consist of $m$ together with precisely
one number from each pair  $\{x,m-x\}$ for odd $x <m/2$. Then $S$ is sum-free. Moreover, although $S$ may not be maximal, no further odd numbers less than $m$ can be added, so distinct $S$ lie in distinct maximal sum-free subsets of $[n]$.

The following construction from~\cite{BLST} also yields the same lower bound on $f_{\max} (n)$.
Suppose that $4|n$ and set $I_1:=\{n/2+1, \ldots, 3n/4\}$ and $I_2:=\{3n/4+1, \ldots,n\}$. First choose the element $n/4$ and a set $S'\subseteq I_2$. Then for every $x\in I_2\setminus S'$, choose $x-n/4\in I_1$. The resulting set $S$ is sum-free but may not be maximal. However, no further element in $I_2$ can be added, thus distinct $S$ lie in distinct maximal sum-free sets in $[n]$. There are $2^{|I_2|}=2^{n/4}$ ways to choose $S$.

\subsection{Counting maximal sum-free sets}\label{23}
The following result provides structural information about the containers $F \in \mathcal F$.  Lemma~\ref{lem-structure} is implicitly stated in~\cite{BLST} and was essentially proven in~\cite{G-CE}. It
is an immediate consequence of a result of Deshouillers, Freiman, S\'os and Temkin~\cite{DFST} on the structure of sum-free sets and a removal lemma of Green~\cite{G-R}. Here $O$ denotes the set of odd numbers in $[n]$.

\begin{lemma}\label{lem-structure}
If $F\subseteq [n]$ has $o(n^2)$ Schur triples  then either

(a) $|F|\leq 0.47 n$;

or one of the following holds for some $-o(1) \leq \ga=\ga(n)\leq 0.03$:

(b) $|F|=\left(\frac{1}{2}-\ga\right)n$ and $F=A \cup B$ where $|A|=o(n)$ and $B \subseteq [(1/2-\gamma)n, n]$ is sum-free;

(c) $|F|=\left(\frac{1}{2}-\ga\right)n$ and $F=A \cup B$ where $|A|=o(n)$ and $B \subseteq O$.
\end{lemma}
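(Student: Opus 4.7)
The plan is to first apply Green's removal lemma to extract a large sum-free subset from $F$, then feed that sum-free set into the Deshouillers--Freiman--S\'os--Temkin (DFST) structural theorem, and finally translate the DFST output into one of the three cases.

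First, I would apply Green's removal lemma from \cite{G-R}: since $F$ has $o(n^2)$ Schur triples, there exists $A \subseteq F$ with $|A| = o(n)$ such that $B := F \setminus A$ is sum-free. This already produces the decomposition $F = A \cup B$ with $|A| = o(n)$ demanded by cases (b) and (c). It remains to control $|F|$ and the location of $B$.

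If $|F| \leq 0.47n$, case (a) holds and we are done. Otherwise $|B| \geq 0.47n - o(n)$, well above the threshold at which the DFST classification is available. The DFST theorem guarantees that a sum-free set $B \subseteq [n]$ of density close to $1/2$ must fall into one of a short list of structural types: either $B$ consists entirely of odd numbers (plus possibly a lower-order exceptional set absorbed into $A$), or $B$ is contained in a high interval of the form $[(1/2-\gamma)n, n]$ for some small $\gamma \geq -o(1)$, or $B$ sits in certain ``middle'' configurations. The crucial quantitative content of DFST is that once $|B| \geq (1/2 - 0.03)n - o(n)$ the middle configurations are ruled out, leaving only the two listed possibilities, and the precise density formula $|F| = (\tfrac{1}{2}-\gamma)n$ with $\gamma \leq 0.03$ comes directly from reading off the DFST bound applied at this density.

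Thus, in the odd-numbers case, we take $A$ as produced by the removal lemma and set $B \subseteq O$, yielding (c). In the high-interval case, we set $B \subseteq [(1/2-\gamma)n, n]$ and again $B$ is sum-free, yielding (b). Finally one must verify that the error term $|A|=o(n)$ produced by the removal lemma is compatible with the density identity $|F| = (\tfrac12-\gamma)n$: this is fine since we allow $\gamma = \gamma(n)$ to absorb the $o(n)$ slack, in particular permitting the range $-o(1) \leq \gamma \leq 0.03$.

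The main obstacle is not the removal step, which is essentially a black-box invocation, but rather reading the DFST classification at exactly the density threshold $0.47n$ so that only types (b) and (c) survive with the stated parameter range on $\gamma$. One has to be careful that the $o(n)$ corruption from the removal lemma does not create Schur triples that force $B$ out of the odd-numbers or high-interval structure; this is handled by absorbing any such corruption into $A$ and re-applying DFST to the cleaned-up $B$, so no new ideas are required beyond bookkeeping.
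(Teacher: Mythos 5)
Your proposal takes the same route the paper intends: Green's removal lemma to decompose $F = A \cup B$ with $B$ sum-free and $|A| = o(n)$, then the Deshouillers--Freiman--S\'os--Temkin theorem applied to $B$. The overall logic is sound, but your account of DFST obscures the actual mechanism. The version used here is a trichotomy: every sum-free $S \subseteq [n]$ satisfies $|S| \le 2n/5 + 1$, or $S \subseteq O$, or $|S| \le \min(S)$. There are no separate ``middle configurations'' to rule out at density $0.47$ --- the first alternative is a pure size bound, eliminated as soon as $|B| > 0.4n + 1$, and $0.47$ is simply a convenient margin above $0.4$ chosen to define case~(a), which also translates to $\gamma \le 0.03$. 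The step you leave implicit, and which is the crux, is that the alternative $|B| \le \min(B)$ forces $B \subseteq [|B|, n] = [(1/2-\gamma)n - o(n), n]$; one then moves the at most $o(n)$ elements of $B$ lying below $(1/2-\gamma)n$ into $A$ to obtain the clean containment $B \subseteq [(1/2-\gamma)n, n]$ of case~(b).

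Two smaller points. First, the worry in your final paragraph is a non-issue: the removal lemma hands you $B$ already sum-free, so DFST applies to it verbatim and there is no ``corruption'' to absorb or any re-application needed. Second, the bound $\gamma \ge -o(1)$ does not come from DFST or the removal lemma per se; it follows because $B$ is a sum-free subset of $[n]$ and hence $|B| \le \lceil n/2 \rceil$, so $|F| = |B| + o(n) \le n/2 + o(n)$. Worth stating, since the lemma explicitly permits slightly negative $\gamma$.
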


The crucial idea in the proof of Theorem~\ref{thm-main} is that we show `most' of the maximal sum-free subsets of $[n]$ `look like' the examples given in Section~\ref{extremal}: We first show that containers of type (a) house only a small (at most $2^{0.249n}$) number of maximal sum-free subsets of $[n]$ (see Lemma~\ref{lem-non-ext}).
For type (b) containers we split the argument into two parts. More precisely, we count the  number of maximal sum-free subsets $S$ of $[n]$ with the property that (i)  the smallest element of $S$ is $n/4\pm o(n)$ 
and (ii) the second smallest element of $S$ is at least $n/2 -o(n)$. (For this we use a direct argument rather than counting such sets within the containers.)
We then show that the  number of maximal sum-free subsets of $[n]$ that lie in type (b) containers but that fail to satisfy  one of (i) and (ii) is small ($o(2^{n/4})$).
We use a similar idea for type (c) containers. Indeed, we show directly that the number of maximal sum-free subsets of $[n]$ that contain \emph{at most} one even number is $O(2^{n/4})$. We then show that the  number of maximal sum-free subsets of $[n]$ that lie in type (c) containers and which contain two or more even numbers is small ($o(2^{n/4})$).

In each of our cases, we give an upper bound on the number of maximal sum-free sets in a container by
counting the number of maximal independent sets in various auxiliary graphs. (Similar techniques were used in~\cite{wolf,BLST}, and in the graph setting in~\cite{sar}.)
In Section~\ref{aux} we collect together a number of results that are useful for this.

%%%%%%%%%%%%%%%%%%%%%%%%%%%%%%%%%%%%%%%%%%%%%%%%%%%%%

\section{Notation and preliminaries}\label{tools}

\subsection{Notation}
For a set $F\subseteq [n]$, denote by $\msf(F)$ the set of all maximal sum-free subsets of $[n]$ that are contained in $F$ and let $f_{\max}(F):=|\msf(F)|$. Also, denote by $\min(F)$ and $\max(F)$ the minimum and the maximum element of $F$ respectively. Let $\min _2 (F)$ denote the second smallest element of $F$. Denote by $E$ the set of all even and by $O$ the set of all odd numbers in $[n]$. 
Given  sets $A,B$, we let $A+B:= \{ a+b : \, a\in A, \, b \in B\}$.
 We say a real valued function $f(n)$ is exponentially smaller than another real valued function $g(n)$ if there exists a constant $\ep >0$ such that $f(n)\le g(n)/2^{\ep n}$ for $n$ sufficiently large.
We use $\log$ to denote the logarithm function of base $2$. 

%In this paper, when using little-$o$ notation we write for example $o(n)$ only for \emph{non-negative} functions $f$ such that $f(n)/n \rightarrow 0$ as $n \rightarrow \infty$.

Throughout, all graphs considered are simple unless stated otherwise. We say that $G$ is a \emph{graph possibly with loops} if $G$ can be obtained from a simple graph by adding at most one loop at each vertex.  We write $e(G)$ for the number of edges in $G$.
Given a vertex $x$ in $G$, we write $\deg _G (x)$ for the \emph{degree} of  $x$ in $G$. Note that a loop at $x$ contributes two to the degree of $x$. We write $\delta (G)$ for the \emph{minimum degree} and $\Delta (G)$ for the \emph{maximum degree} of $G$.
 Denote by $G[T]$ the induced subgraph of $G$ on the vertex set $T$ and $G\setminus T$ the induced subgraph of $G$ on the vertex set $V(G)\setminus T$. 
%Denote by $E[T]$ the set of edges in $G$ spanned by $T$ and by $E[T,V(G)\setminus T]$ the set of edges in $G$ with exactly one vertex in $T$. 
Given $x \in V(G)$, we write $N_G (x)$ for the \emph{neighourhood of $x$ in $G$}. Given $S\subseteq V(G)$, we write $N_G (S)$ for the set of vertices $y \in V(G)$ such that $xy \in E(G)$ for some $x \in S$. 
 
 We write $C_m$ for the cycle, and $P_m$ for the path on $m$ vertices. Given graphs $G$ and $H$ we write $G\square H$ for the \emph{cartesian product graph}. So $G\square H$ has vertex set
 $V(G) \times V(H)$ and $(x,y)$ and $(x',y')$ are adjacent in $G\square H$ if (i) $x=x'$ and $y$ and $y'$ are adjacent in $H$ or (ii) $y=y'$ and $x$ and $x'$ are adjacent in $G$. 

Throughout the paper we omit floors and ceilings where the argument is unaffected. We write $0<\alpha \ll \beta \ll \gamma$ to mean that we can choose the constants
$\alpha, \beta, \gamma$ from right to left. More
precisely, there are increasing functions $f$ and $g$ such that, given
$\gamma$, whenever we choose some $\beta \leq f(\gamma)$ and $\alpha \leq g(\beta)$, all
calculations needed in our proof are valid. 
Hierarchies of other lengths are defined in the obvious way.

%%%%%%%%%%%%%%%%%%%%%%%%%

\subsection{The number of sets with small sumset}

We  need the following lemma of Green and Morris~\cite{GM}, which bounds the number of sets with small sumset.
\begin{lemma}\label{lem-G-M}
Fix $\de>0$ and $R>0$. Then the following hold for all integers $s\ge s_0(\de, R)$. For any $D\in\mathbb{N}$ there are at most 
$$2^{\de s}{\frac{1}{2}Rs\choose s}D^{\lfloor R+\de\rfloor}$$
sets $S\subseteq [D]$ with $|S|=s$ and $|S+S|\le R|S|$.
\end{lemma}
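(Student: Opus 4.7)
The plan is to combine the hypergraph container method with additive combinatorics in order to produce a small collection of ``containers'' in $[D]$ such that every set $S \subseteq [D]$ of size $s$ with $|S+S|\le R|S|$ lies in one of them, with each container having size not much larger than $\tfrac12 Rs$. The core additive input is that small doubling forces large additive energy: since the $\binom{s+1}{2}$ pair sums $x+y$ with $x\le y$ in $S$ take at most $Rs$ distinct values in $S+S$, pigeonhole yields of order $s^3/R$ additive quadruples $(x,y,x',y')\in S^4$ with $x+y=x'+y'$. By Pl\"{u}nnecke-Ruzsa, all iterated sumsets of $S$ grow only polynomially in $R$, so $S$ is far from additively generic, and Freiman-type constraints should let us capture $S$ inside a short progression-like container.

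First I would set up an auxiliary hypergraph $\cH$ on vertex set $[D]$ whose edges encode the additive relations that the small doubling forces on $S$. The point is to choose edges (for instance, suitable additive quadruples $x+y=x'+y'$ or Schur-type triples $x+y=z$) so that the co-degree function of $\cH$ is controlled by the iterated-sumset bounds from Pl\"{u}nnecke-Ruzsa, and so that any small-sumset $S$ of size $s$ contains many edges of $\cH$. Then I would run the standard iterative container-building process: at each step, pick a highest-degree vertex of the current working set to extend a \emph{fingerprint}, and prune every vertex whose co-degree with the fingerprint so far is below a carefully chosen threshold. A balance-of-parameters calculation shows that termination occurs after at most $\lfloor R+\de\rfloor$ rounds, at which point the remaining container has size at most $(\tfrac12 R+o(1))s$. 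Since fingerprints are sequences in $[D]$ of length at most $\lfloor R+\de\rfloor$, the number of containers is at most $D^{\lfloor R+\de\rfloor}$; each contributes at most $\binom{Rs/2}{s}$ subsets of size $s$; and the $2^{\de s}$ factor absorbs polynomial and logarithmic overheads from the parameter choices.

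The hard part will be obtaining the sharp container size $\tfrac12 Rs$ and the sharp fingerprint length $\lfloor R+\de\rfloor$. A direct application of Freiman's theorem places $S$ inside a generalized arithmetic progression of size only $C(R)s$ with $C(R)\gg R$, which would yield $\binom{C(R)s}{s}$ and an exponentially weaker bound. Achieving the constant $\tfrac12$ requires exploiting the precise additive-energy estimate coming from $|S+S|\le R|S|$ rather than just the qualitative Freiman structure: the co-degree threshold in the container iteration must be calibrated so that pruning kills vertices at exactly the rate governed by the iterated-sumset growth, and so that the process cannot survive past $\lfloor R+\de\rfloor$ steps. Getting these two constraints to meet at the sharp constant, while keeping all error terms absorbable into $2^{\de s}$, is where the Green-Morris argument is most delicate.
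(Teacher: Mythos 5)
This statement is Lemma~\ref{lem-G-M}, which the paper does not prove: it is imported verbatim from Green and Morris~\cite{GM}. There is therefore no in-paper argument to compare your proposal against; the only question is whether your sketch constitutes a credible stand-alone proof, and it does not.

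The central issue is that your argument stops precisely where the theorem begins. You correctly identify the shape of the bound --- the number of fingerprints should be $D^{\lfloor R+\de\rfloor}$, each container should have size roughly $\tfrac12 Rs$, and $\binom{Rs/2}{s}$ counts $s$-subsets of a container --- but every quantitative step that would produce these numbers is replaced by an assertion: ``a balance-of-parameters calculation shows that termination occurs after at most $\lfloor R+\de\rfloor$ rounds'', ``the remaining container has size at most $(\tfrac12 R+o(1))s$'', ``the $2^{\de s}$ factor absorbs \dots overheads''. These are exactly the claims that need proof; none of them follows from Pl\"unnecke--Ruzsa or from the additive-energy count you invoke without substantial further work. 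Your final paragraph concedes this openly, which is commendable honesty but confirms the proposal is an outline, not a proof.

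There is also a conceptual mismatch in the reduction you propose. The hypergraph container method, as usually formulated, is a theorem about \emph{independent sets}: objects that contain \emph{no} edges of some hypergraph. A set with $|S+S|\le R|S|$ is the opposite of independent in any hypergraph encoding additive coincidences --- it contains \emph{many} additive quadruples --- and the property $|S+S|\le R|S|$ is neither monotone under taking subsets nor an avoidance property. Green and Morris do not encode small doubling as hypergraph independence and then apply an off-the-shelf container theorem; they prove a bespoke ``container/fingerprint'' lemma for small-sumset sets by a direct iterative covering argument, where the fingerprint is a short sequence $a_1,\dots,a_k\in S$ (hence the $D^{\lfloor R+\de\rfloor}$ factor, as $k\le \lfloor R+\de\rfloor$) and the container is built from translates and intersections of approximate sumsets determined by the fingerprint. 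The constant $\tfrac12$ arises from the symmetry $x+y=y+x$ in $S+S$ (the extremal example is an arithmetic progression of length roughly $Rs/2$), not from the additive-energy pigeonhole you lead with. So even at the level of strategy, your sketch diverges from the actual Green--Morris proof in a way that would need to be bridged by new ideas, not just more detail.

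In short: the paper cites this lemma rather than proving it; your proposal names relevant tools but replaces every sharp estimate with a placeholder and frames the argument in a standard-container template that does not directly apply to the non-monotone small-sumset property. As written it is a description of a proof, not a proof.
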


\subsection{Maximal independent sets in graphs}\label{aux}
In this section we collect together  results on the number of maximal independent sets in a graph. Let ${\rm{MIS}}(G)$ denote the number of maximal independent sets in a graph $G$.

Moon and Moser~\cite{MM} showed that for any simple graph $G$, ${\rm{\rm{MIS}}}(G)\le 3^{|G|/3}$. 
When a graph is triangle-free, this bound can be improved significantly: A result of Hujter and Tuza~\cite{HT} states that for any triangle-free graph $G$,
 \begin{align}\label{htnew}
 {\rm{\rm{MIS}}}(G)\le 2^{|G|/2}.
 \end{align}
The next result implies that the bound given in (\ref{htnew}) can be further lowered if $G$ is additionally not too sparse. 

\begin{lemma}\label{dense}
Let $n,D \in \mathbb N$ and $k \in \mathbb R$. %such that $2k \in \mathbb Z$.
%\COMMENT{So $k$ could be negative... important! Also important that this graph does not have loops}
Suppose that $G$ is a triangle-free graph  on $n$ vertices with $\Delta (G) \leq D$ and $e(G) \geq n/2+k$. Then 
$${\rm MIS} (G) \leq 2^{n/2-k/(100D^2)}.$$
\end{lemma}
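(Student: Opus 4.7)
I would prove this by induction on $n$, peeling off simple substructures and applying Hujter--Tuza's bound (\ref{htnew}) to the residual subgraphs.

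First, the easy peelings. If $G$ has an isolated vertex $v$, then $v$ belongs to every maximal independent set, so $\mis(G) = \mis(G - v)$ and induction handles the case (the hypothesis is preserved with the same $k$). If $G$ has an isolated edge $\{u, v\}$, then $\mis(G) = 2\mis(G - \{u, v\})$, and induction again finishes, since one edge is lost against one reduction of $n/2$.

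Next, if $G$ has a pendant $v$ whose neighbour $u$ satisfies $d_u \geq 2$, then in any maximal independent set $I$ exactly one of $v, u$ appears, because $u$ is the only possible dominator of $v$. This yields the branching
\[
\mis(G) \leq \mis(G - \{u, v\}) + \mis(G - N[u]).
\]
Tracking the loss of edges in each subproblem (at most $D$ on the left and at most $Dd_u \leq D^2$ on the right) and invoking the inductive hypothesis, the sum is bounded by $2^{n/2 - k/(100D^2)} \bigl(2^{-1} + 2^{-(d_u+1)/2} + O(1/D)\bigr)$, and the bracketed factor is at most $2^{-1} + 2^{-3/2} + O(1/D) < 1$ because $d_u \geq 2$, which closes this case.

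The main obstacle is the remaining case $\delta(G) \geq 2$, where $e(G) \geq n$ forces $k \geq n/2$, so only a saving of order $n/D^2$ beyond Hujter--Tuza is required. My plan is to fix a vertex $v$ of degree $2$ with neighbours $u_1, u_2$ and branch on the four valid patterns of $I \cap \{v, u_1, u_2\}$ (the empty pattern is forbidden because $v$ must be dominated). Triangle-freeness bounds $|N^2[v]| \leq 1 + d_v(D-1) \leq D^2$, so each branching subproblem loses at most $O(D^2)$ edges and the excess parameter decreases by a controlled amount. The delicate point is that a single three-way branch at a degree-$2$ vertex in isolation loses a small multiplicative factor relative to the Hujter--Tuza target; this small loss is absorbed by iterating the branching across the many degree-$\geq 2$ vertices available (by $\sum_{v}(d_v - 1) = 2k$ there are at least $2k/D$ such vertices), amortising the per-step losses against the cumulative gain. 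This amortisation step is the hard part and will need to mirror the refinement in Hujter--Tuza's own proof of (\ref{htnew}), producing the claimed saving of $k/(100D^2)$ in the exponent.
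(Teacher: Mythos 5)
Your low-degree peelings (isolated vertex, isolated edge, pendant attached to a vertex of degree at least $2$) match the paper's Cases 1 and 2, and the branching inequality $\mis(G)\le\mis(G-\{u,v\})+\mis(G-N[u])$ with the numerical check $2^{-1}+2^{-3/2}+O(1/D)<1$ is essentially how the paper closes Case 2. The trouble starts where you call the remaining case ``the main obstacle'': your plan there has two real gaps.

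First, you only ever branch at a vertex of degree~$2$, but after removing degree-$0$ and degree-$1$ vertices you may well have $\delta(G)\ge 3$, in which case your plan says nothing. The paper handles these separately: a vertex of minimum degree gives an $i$-way branch ($i=\deg+1$), and the factor $i\,2^{-i/2+i/100}$ is $<1$ once $i\ge 5$ (min degree $\ge 4$); there is then dedicated casework for a degree-$3$ vertex with a higher-degree neighbour, and for $3$-regular components a two-way split $\mis(G)\le\mis(G-v)+\mis(G-N[v])$.

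Second, and more fundamentally, the three-way branch at a degree-$2$ vertex is \emph{not} strong enough on its own, and no amount of amortisation fixes this: branching on a degree-$2$ vertex $v$ gives $\mis(G)\le\mis(G-N[v])+\mis(G-N[u_1])+\mis(G-N[u_2])$, i.e.\ a factor of at most $3$ while removing at least $3$ vertices. That is exactly the Moon--Moser $3^{n/3}$ rate, not the Hujter--Tuza $2^{n/2}$ rate, so for $2$-regular $G$ this branching never beats the target $2^{n/2-k/(100D^2)}$ (indeed $3>2^{3/2}$). The paper's route is structural: after the other cases one is reduced to disjoint unions of $2$-regular and $3$-regular components; the $2$-regular triangle-free components are disjoint cycles of length at least $4$, and one invokes F\"uredi's recursion $\mis(C_m)=\mis(C_{m-2})+\mis(C_{m-3})$ to show $\mis(C_m)<2^{0.49m}$, which directly beats the target. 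Your proposal never produces this ingredient; saying the amortisation ``will need to mirror the refinement in Hujter--Tuza's own proof'' is precisely the step that is missing, and it is the heart of the lemma. As written, the plan would not close.
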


The following result for `almost triangle-free' graphs follows from Lemma~\ref{dense}.
\begin{cor}\label{use}
Let $n,D \in \mathbb N$ and $k \in \mathbb R$. %such that $2k \in \mathbb Z$.
Suppose that $G$ is a graph and $T$ is a set such that $G':=G\setminus T$ is triangle-free.
Suppose that $\Delta (G) \leq D$, $|G'|=n$  and $e (G') \geq n/2+k$. Then 
$${\rm MIS} (G) \leq 2^{n/2-k/(100D^2)+101|T|/100}.$$
\end{cor}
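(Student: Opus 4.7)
The plan is to reduce the problem to Lemma~\ref{dense} by first deciding the trace of a maximal independent set on $T$, and then counting the possible traces on $V(G')$ inside an appropriate induced subgraph of $G'$.

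More concretely, I would argue as follows. Every maximal independent set $I$ in $G$ is determined by the pair $(I\cap T,\, I\cap V(G'))$, so the first step is to enumerate over $T':=I\cap T$, giving at most $2^{|T|}$ choices. Having fixed $T'$, let $U:=V(G')\setminus N_G(T')$. By maximality of $I$, every vertex of $U\setminus I$ must have a neighbour in $I$, and by definition of $U$ this neighbour lies in $I\cap V(G')\subseteq U$; hence $I\cap V(G')$ is a maximal independent set in the induced subgraph $H:=G'[U]$. So it suffices to bound $\mathrm{MIS}(H)$ for each choice of $T'$.

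The next step is to control the parameters of $H$. Since $G'$ is triangle-free, so is $H$. Since $\Delta(G)\le D$, we have $|V(G')\setminus U|\le D|T|$, so $|H|\le n$, and the number of edges of $G'$ incident to $V(G')\setminus U$ is at most $D^2|T|$; therefore $e(H)\ge e(G')-D^2|T|\ge n/2+k-D^2|T|\ge |H|/2+(k-D^2|T|)$. Applying Lemma~\ref{dense} to $H$ (with the parameter $k$ replaced by $k-D^2|T|$, and noting $\Delta(H)\le D$) yields
\[
\mathrm{MIS}(H)\le 2^{|H|/2-(k-D^2|T|)/(100D^2)}\le 2^{n/2-k/(100D^2)+|T|/100}.
\]
Summing over the at most $2^{|T|}$ choices of $T'$ then gives the claimed bound $2^{n/2-k/(100D^2)+101|T|/100}$.

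I do not expect a real obstacle here: the only thing to check is that the loss in applying Lemma~\ref{dense} to $H$ instead of $G'$—namely $D^2|T|$ missing edges and up to $D|T|$ missing vertices—is absorbed cleanly by the $+101|T|/100$ slack in the exponent, which it is (the $D^2|T|$ edge loss contributes $|T|/100$ after dividing by $100D^2$, and the additional $2^{|T|}$ factor from choosing $T'$ contributes the remaining $|T|$). The one mild subtlety is that Lemma~\ref{dense} is stated for a real number $k$, so we are free to use it even when $k-D^2|T|$ is negative; otherwise the argument is mechanical.
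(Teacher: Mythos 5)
Your proposal is correct and follows essentially the same route as the paper: decompose a maximal independent set into its trace on $T$ and its trace on $V(G')$, observe that the latter is maximal in the induced triangle-free subgraph obtained by deleting $T$ and the neighbourhood of the chosen trace, bound the edge loss by $D^2|T|$, and apply Lemma~\ref{dense} with the shifted parameter $k-D^2|T|$. Your subgraph $H=G'[U]$ is exactly the paper's $G''=G\setminus(T\cup N_G(S))$, and the bookkeeping ($2^{|T|}$ choices and the $|T|/100$ exponent slack from the lost edges) matches the paper's.
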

We defer the proofs of Lemma~\ref{dense} and Corollary~\ref{use} to the appendix.

The following result gives an improvement on the Moon--Moser bound for graphs that are not too sparse, almost regular and of large minimum degree. (The result is proven as equation (3) in~\cite{BLST}.)
\begin{lemma}[\cite{BLST}] \label{lem-ik}
Let $k\ge 1$ and let $G$ be a graph on $n$ vertices possibly with loops. 
Suppose that $\De(G)\le k\de(G)$ and set $b:= \sqrt{\delta (G)}$.  Then
\begin{align*}
{\rm{MIS}}(G)\le \sum_{0 \le i\le n/b}{n\choose i}3^{{\left(\frac{k}{k+1}\right)\frac{n}{3}} + \frac{2n}{3b}}  .
\end{align*}
\end{lemma}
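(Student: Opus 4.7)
The plan is to combine a Moon--Moser style branching recursion with the near-regularity condition $\Delta(G)\le k\delta(G)$ to improve the baseline bound $3^{n/3}$ on $\mathrm{MIS}(G)$.

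First I would establish that every maximal independent set $I$ in $G$ satisfies $|I|\le \frac{k}{k+1}n$. This is an easy double-counting of the edges between $I$ and $V(G)\setminus I$: each $v\in I$ contributes at least $\delta:=\delta(G)$ such edges (a loop at $v$ adds two to its degree, consistent with the paper's convention), while each $w\in V(G)\setminus I$ absorbs at most $\Delta(G)\le k\delta$ of them. Since $I$ is maximal, $V(G)\setminus I\subseteq N(I)$, so $|V(G)\setminus I|\ge \delta|I|/\Delta\ge |I|/k$, giving $|I|\le kn/(k+1)$.

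Next I would iterate the classical branching. At each step, pick a vertex $v$ of the current graph $H$ (initially $H=G$) and branch on the two options ``$v\in I$'' or ``$v\notin I$, with a specified neighbor $u\in N_H(v)$ in $I$''; both branches remove a closed neighborhood $N_H[\cdot]$. By maximality of $I$, one of these branches captures any MIS of $G$, giving the standard inequality $\mathrm{MIS}(H)\le \sum_{u\in N_H[v]}\mathrm{MIS}(H-N_H[u])$. I run this recursion greedily while the minimum degree of the current $H$ remains at least $b:=\sqrt{\delta}$; each such step kills at least $b+1$ vertices and has at most $\Delta+1$ options. Invoking the size bound above, the number of branching steps whose chosen vertex actually enters the final $I$ is at most $|I|\le \tfrac{k}{k+1}n$, and the Moon--Moser triple analysis then yields a contribution of at most $3^{\frac{k}{k+1}\cdot\frac{n}{3}}$ from these ``productive'' branches. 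The extra factor $3^{2n/(3b)}$ in the exponent absorbs the at most $O(n/b)$ boundary branchings where the clean triple charge fails (for instance, when removing $N_H[\cdot]$ leaves isolated vertices behind).

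Finally I would handle the residual graph $H'$ that remains once the above phase stops. Every vertex of $H'$ has degree less than $b$ in $H'$, and since each prior branching removed at least $b$ vertices from $V(G)$, the number of residual vertices satisfies $|V(H')|\le n/b$. The intersection of any MIS with $H'$ is then a subset of a set of size at most $n/b$, and directly enumerating it contributes the combinatorial factor $\sum_{0\le i\le n/b}\binom{n}{i}$ in front of the above exponential term.

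The main obstacle will be the careful bookkeeping that converts the size bound $|I|\le kn/(k+1)$ from the first step into the reduced Moon--Moser exponent $\tfrac{k}{k+1}\cdot\tfrac{n}{3}$, together with quantifying the $O(n/b)$ slack term $\tfrac{2n}{3b}$ from the boundary of the recursion. Once these two ingredients are made precise, assembling the three contributions yields the claimed bound.
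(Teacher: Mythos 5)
Your opening step, the double count giving $|I|\le\frac{k}{k+1}n$ for every independent set $I$, is correct and is indeed the source of the factor $\frac{k}{k+1}$. But the mechanism you propose for converting it into the improved Moon--Moser exponent does not work. The $n/3$ in Moon--Moser is an accounting of \emph{vertices removed}, not of branching steps: if step $j$ branches over $\delta_j+1$ options and removes at least $\delta_j+1$ vertices, then $\prod_j(\delta_j+1)\le 3^{n/3}$ because $\sum_j(\delta_j+1)\le n$ and $x\mapsto x^{1/x}$ is maximised (over positive integers) at $x=3$. Bounding the \emph{depth} of the branching tree by $|I|\le\frac{k}{k+1}n$ therefore gives nothing: since $k\ge 1$ we have $\frac{k}{k+1}n\ge n/2>n/3$, so this is weaker than the depth bound $n/3$ that Moon--Moser already implies, and it places no extra constraint. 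If instead you imagine charging a factor $3^{1/3}\approx 1.44$ per step, that is also impossible, since every nontrivial step has an integer branching factor at least $2>3^{1/3}$; thus $\frac{k}{k+1}n$ such steps would give far more than $3^{\frac{k}{k+1}n/3}$ leaves, not fewer. How the size bound on $I$ actually enters the exponent is the heart of the lemma, and this is precisely the step that is missing from your sketch; a genuinely different amortisation is needed.

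The last step has a separate gap. From ``each prior branching removed at least $b$ vertices'' you conclude $|V(H')|\le n/b$, but that premise only bounds the \emph{number of branching steps}, not the number of vertices remaining; the residual graph could still contain nearly all of $V(G)$. Also, stopping when $\delta(H)<b$ only guarantees that \emph{some} vertex of $H'$ has degree below $b$, not every vertex of $H'$ as you assert. Finally, note that $\sum_{0\le i\le n/b}\binom{n}{i}$ is in general far larger than $2^{n/b}$, the number of subsets of a fixed $(n/b)$-element set, which is a hint that the intended argument must select an exceptional set of at most $n/b$ vertices from the \emph{entire} vertex set $[n]$ rather than enumerating subsets of a small residual graph.
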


\begin{fact}\label{newfact}
Suppose that $G'$ is a (simple) graph. If $G$ is a graph obtained from $G'$ by adding loops at some vertices $x \in V(G')$ then
$${\rm MIS}(G) \leq {\rm MIS}(G').$$
\end{fact}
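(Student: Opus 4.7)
The plan is to construct an injective map $\phi:\mathrm{MIS}(G)\to\mathrm{MIS}(G')$, which immediately yields the desired inequality. Let $L\subseteq V(G')$ denote the set of vertices at which a loop was added to obtain $G$ from $G'$. Since a loop at $x$ contributes two to the degree of $x$, no independent set in $G$ can contain any vertex of $L$; in particular every $I\in\mathrm{MIS}(G)$ satisfies $I\cap L=\emptyset$ and is an independent set in $G'$ as well (the only edges of $G$ not in $G'$ are the loops at $L$).

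Fix an arbitrary ordering $x_1,\dots,x_k$ of $L$. Given $I\in\mathrm{MIS}(G)$, define $\phi(I)$ by a greedy extension: set $I_0:=I$ and for $1\le i\le k$, let
\[
I_i:=\begin{cases} I_{i-1}\cup\{x_i\} & \text{if } x_i \text{ has no neighbor in } I_{i-1} \text{ in } G',\\ I_{i-1} & \text{otherwise.}\end{cases}
\]
Then set $\phi(I):=I_k$.

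I would first verify $\phi(I)\in\mathrm{MIS}(G')$. Independence in $G'$ is immediate by construction. For maximality, take any $v\in V(G')\setminus \phi(I)$. If $v\notin L$, then $v\notin I$ and since $I$ is maximal in $G$, $v$ has a neighbor in $I\subseteq\phi(I)$. If $v\in L$, say $v=x_i$, then $v$ was rejected when processed, so $x_i$ had a neighbor in $I_{i-1}\subseteq\phi(I)$. Either way $v$ cannot be added, so $\phi(I)$ is maximal.

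Finally, $\phi$ is injective because $\phi(I)\setminus I\subseteq L$ by construction and $I\cap L=\emptyset$, so $I=\phi(I)\setminus L$ is recoverable from $\phi(I)$. This completes the proof. The statement is elementary, so I do not anticipate any genuine obstacle; the only subtlety to keep in mind is that the greedy extension must be carried out in a canonical, fixed order so that $\phi$ is well-defined and the recovery argument $I=\phi(I)\setminus L$ goes through.
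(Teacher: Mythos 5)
The paper states this as a \emph{Fact} without proof, so there is no argument of its own to compare against; the burden is only to check that your proof is sound, and it is. Your key observations are exactly the right ones: in the convention of the paper a looped vertex cannot lie in any independent set, so every $I\in\mathrm{MIS}(G)$ is an independent subset of $V(G')\setminus L$ (where $L$ is the set of newly looped vertices); the greedy extension of $I$ over $L$ in a fixed order produces a well-defined element $\phi(I)\in\mathrm{MIS}(G')$; and injectivity follows because $I=\phi(I)\setminus L$. One small nitpick on exposition: the sentence ``since a loop at $x$ contributes two to the degree of $x$, no independent set in $G$ can contain any vertex of $L$'' gives the degree convention but not the actual reason — the reason is simply that a loop is an edge from $x$ to itself, so $\{x\}$ already violates independence. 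With that phrasing tightened, the proof is complete and essentially the only natural way to prove the statement; an equivalent phrasing is to observe that for any $J\in\mathrm{MIS}(G')$, the set $J\setminus L$ is independent in $G$ and contains every $I\in\mathrm{MIS}(G)$ with $I\subseteq J$, so by maximality at most one $I\in\mathrm{MIS}(G)$ can extend to each $J$.
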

%\COMMENT{This comment will be hidden in final version. 
%Proof: It suffices to consider two graphs $G$ and $G'$ where $G$ is obtained from $G'$ by adding a single loop. Suppose $G$ has the new loop at $x$. Want to find an injection from $MIS$ in $G$ to $MIS$ in $G'$: Notice that every max ind set $I$ in $G$ does not contain $x$. Map $I$ to $I$ if $I\cup \{x\}$ not independent set  in $G'$. Otherwise map $I$ to $I\cup \{x\}$. This is an injection from the max ind sets in $G$ to the max ind sets in $G'$}

The following lemma from~\cite{BLPS} gives an improvement on~\eqref{htnew} when $G$ additionally contains many vertex disjoint $P_3$s. Its proof is similar to that of Lemma~\ref{dense}.

\begin{lemma}[\cite{BLPS}]\label{lem-mis-p3}
Let $G$ be an $n$-vertex triangle-free graph, possibly with loops. If $G$ contains $k$ vertex-disjoint $P_3$s, then 
$${\rm{MIS}}(G)\le 2^{\frac{n}{2}-\frac{k}{25}}.$$
\end{lemma}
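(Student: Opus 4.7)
My plan is to prove this by induction on $n$, mirroring the proof of Lemma~\ref{dense}. The base case $k=0$ is exactly Hujter--Tuza's bound~\eqref{htnew}.

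For the inductive step, fix a single $P_3$ from the given packing, say $P_1=abc$ with center $b$, so that $P_2,\ldots,P_k$ are still vertex-disjoint from $P_1$. Every maximal independent set $I$ of $G$ restricts to $V(P_1)$ in one of the five independent subsets
\[
\emptyset,\quad \{a\},\quad \{b\},\quad \{c\},\quad \{a,c\}.
\]
For each option $X$, I will show that the maximal independent sets $I$ with $I\cap V(P_1)=X$ inject into $\mathrm{MIS}(G')$ for some triangle-free induced subgraph $G'\subseteq G$ on at most $n-3$ vertices that still contains at least $k-1$ of the $P_j$'s (for $j\ge 2$). The induction hypothesis then bounds this count by $2^{(n-3)/2-(k-1)/25}$.

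When $X\in\{\{b\},\{a,c\}\}$ is \emph{internally maximal} in $V(P_1)$, the residual $I\setminus X$ is a MIS of $G-N_G[X]$, a triangle-free graph on at most $n-3$ vertices; the $P_3$s $P_2,\ldots,P_k$ survive whenever $N_G(b)$ (resp.\ $N_G(\{a,c\})$) meets few of them. When $X\in\{\emptyset,\{a\},\{c\}\}$ is \emph{defective}, the maximality of $I$ in $G$ automatically forces an external dominator in $I$ for whichever vertex of $\{a,b,c\}$ is uncovered by $X$; this constraint matches each such $I$ to a MIS of a further-reduced triangle-free subgraph of the appropriate size, and again induction closes the bound.

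The principal obstacle is the arithmetic of the inductive step. A naive five-way union bound produces an inflation factor of $5=2^{\log_2 5}$, whereas closing the induction demands an effective branching factor of at most $2^{3/2-1/25}\approx 2.75$ per removed $P_3$. To bridge this gap I plan to merge cases that share a common witness vertex in $V(P_1)$ (so that several options collapse to the same MIS of the reduced graph), and to handle separately the sub-case in which some vertex of $V(P_1)$ has large degree, in which branch the reduced graph loses strictly more than three vertices and the induction closes with slack. The constant $1/25$ is a conservative choice that makes this bookkeeping go through, much as $1/(100D^2)$ does in Lemma~\ref{dense}.
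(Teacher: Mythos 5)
The paper does not prove this lemma; it cites it from~\cite{BLPS}, remarking only that the proof is similar to that of Lemma~\ref{dense}. Judged on its own terms, your proposal has the right skeleton (induction on $n$, pivot on a single $P_3$, base case from Hujter--Tuza) but leaves a genuine gap precisely where the work is. First, the claimed injections into $\mathrm{MIS}(G')$ for a subgraph $G'$ on at most $n-3$ vertices do not hold for the defective options $X\in\{\emptyset,\{a\},\{c\}\}$: if $\deg_G(a)=1$ then $|N_G[a]|=2$ and $G-N_G[a]$ has $n-2$ vertices, and for $X=\emptyset$ the ``forced external dominator'' of $b$ is not unique, so one does not get a single injection but rather a sum over $b'\in N_G(b)\setminus\{a,c\}$. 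Second, you have not verified that the retained $P_3$s survive: nothing stops $N_G[a]$ or $N_G[b]$ from meeting several of $P_2,\ldots,P_k$, and (unlike Lemma~\ref{dense}) there is no maximum-degree hypothesis here to cap how many are destroyed per step, so ``still contains at least $k-1$ of the $P_j$'s'' requires an argument you have not supplied.

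Most importantly, you explicitly flag that the five-way split over-counts ($5 > 2^{3/2-1/25}$) and then wave at ``merging cases that share a common witness'' and ``handling the large-degree sub-case separately.'' That is exactly the content of the lemma, not a loose end. In fact the hard case is the \emph{small}-degree one: when $a,b,c$ form a component that is a bare $P_3$, the three terms $\mathrm{MIS}_G(a)+\mathrm{MIS}_G(b)+\mathrm{MIS}_G(c)$ over-count by a factor that already breaks the budget, and one must instead notice that $\mathrm{MIS}_G(a)=\mathrm{MIS}_G(c)$ and collapse them; the scheme for doing this systematically (degree-based case split on the center $b$, applying Fact~\ref{fact2} at a chosen vertex rather than partitioning by $I\cap V(P_1)$, and bounding the number of destroyed $P_3$s by the number of removed vertices) is what actually closes the induction with constant $1/25$, and you have not written it. As it stands, the proposal identifies the obstacle but does not overcome it, so it is not a proof.
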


%%%%%%%%%%%%%%%%%%%%%%%%%%%%%%%%%%%%%%%%%%%%%%%%%%%%%

\section{Proof of Theorem~\ref{thm-main}}\label{sec-main}
Let $1\leq i \leq 4$ and $0<\eta <1$.
To prove Theorem~\ref{thm-main}, we must show that there is a constant $C_i$ (dependent only on $i$) such that if $n$ is sufficiently large and $n \equiv i \mod 4$ then
\begin{align}\label{ultimateaim}
(C_i - \eta )2^{n/4} \leq f_{\max} (n) \leq (C_i + \eta )2^{n/4}.
\end{align}

Given $\eta >0$ and  sufficiently large $n$ with $n \equiv i \mod 4$, define constants $\alpha, \delta, \ep>0$ so that
\begin{align}\label{hier}
0<1/n \ll \alpha \ll  \delta \ll \ep \ll \eta <1.
\end{align}

Let $\mathcal F$ be the family of containers obtained from Lemma~\ref{lem-container}. Since $n$ is sufficiently large, Lemma~\ref{lem-structure} implies that $|\mathcal F|\leq 2^{\alpha n}$ and
for every $F \in \cF$ either

(a) $|F|\leq 0.47 n$;

or one of the following holds for some $-\alpha \leq \ga=\ga(n)\leq 0.03$:

(b) $|F|=\left(\frac{1}{2}-\ga\right)n$ and $F=A \cup B$ where $|A|\leq \alpha n$ and $B \subseteq [(1/2-\gamma)n, n]$ is sum-free; 

(c) $|F|=\left(\frac{1}{2}-\ga\right)n$ and $F=A \cup B$ where $|A|\leq \alpha n$ and $B \subseteq O$.

\noindent Throughout the rest of the paper we refer to such containers as type~(a), type~(b) and type~(c), respectively.

For any subsets $B, S\subseteq [n]$, let $L_S[B]$ be the \emph{link graph of $S$ on $B$} defined as follows. The vertex set of $L_S[B]$ is $B$. The edge set of $L_S[B]$ consists of the following two types of edges:

\noindent (i) Two vertices $x$ and $y$ are adjacent if there exists an element $z\in S$ such that $\{x,y,z\}$ forms a Schur triple; 

\noindent (ii) There is a  loop at a vertex $x$ if $\{x,x, z\}$ forms a Schur triple for some $z \in S$ or if  $\{x, z,z'\}$ 
forms a Schur triple for some $z, z'\in S$.

\smallskip

The following simple lemma from~\cite{BLST} will be applied in many cases throughout the proof.

\begin{lemma}[\cite{BLST}]\label{lem-mis}
Suppose that $B$ and $S $ are both sum-free subsets of $[n]$. If $I\subseteq B$ is  such that $S\cup I$ is a maximal sum-free subset of $[n]$, then $I$ is a maximal independent set in $G:=L_S[B]$.
\end{lemma}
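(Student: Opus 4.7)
The plan is to establish the two defining conditions of a maximal independent set in $G := L_S[B]$ in turn. For independence, suppose toward contradiction that $I$ contains an edge or loop of $G$. If distinct $x,y \in I$ are adjacent, then clause~(i) in the definition of $L_S[B]$ supplies some $z \in S$ with $\{x,y,z\}$ a Schur triple, and this triple lies inside $S \cup I$, contradicting its sum-freeness. If some $x \in I$ carries a loop, then clause~(ii) produces either $z \in S$ with $\{x,x,z\}$ a Schur triple or $z, z' \in S$ with $\{x,z,z'\}$ a Schur triple; either way $S \cup I$ contains a Schur triple, contradicting its sum-freeness.

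For maximality, I take an arbitrary $x \in B \setminus I$ and aim to produce either an edge of $G$ from $x$ to a vertex of $I$, or a loop of $G$ at $x$. Since $S \cup I$ is maximal sum-free, the set $S \cup I \cup \{x\}$ is not sum-free, and because $S \cup I$ is itself sum-free, every witnessing Schur triple must use $x$; write such a triple as $\{x,a,b\}$ with $a, b \in S \cup I$ (allowing $a = b$). Now I case-split on where $a$ and $b$ sit. If both $a, b \in I \subseteq B$, then $\{x,a,b\} \subseteq B$ is a Schur triple inside the sum-free set $B$, a contradiction. Hence at least one of $a, b$ lies in $S$. If both lie in $S$, then clause~(ii) places a loop at $x$; otherwise, relabelling so that $a \in I$ and $b \in S$, clause~(i) yields an edge from $x$ to $a$ in $G$. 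In every case $I \cup \{x\}$ is not independent, so $I$ is maximal.

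The step requiring real care is the case analysis above: the sum-freeness hypothesis on $B$ is what rules out the scenario in which the Schur triple has all three elements in $B$ but no element in $S$, since such a triple would not be visible as an edge of $L_S[B]$; without that hypothesis the lemma is false. The degenerate possibility $a = b \in S$ is absorbed at no cost, as $\{x,a,a\}$ with $a \in S$ still triggers the loop clause. Finally, in the intended applications the partition is set up so that $B \cap S = \emptyset$, so the corner case $x \in B \cap S$ (in which there need not be a loop at $x$, since $S$ is sum-free) does not arise.
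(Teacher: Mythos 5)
Your two-part strategy --- verify independence, then maximality via a case split on where the remaining elements of a witnessing Schur triple lie --- is the natural route, and the independence half is airtight. In the maximality half there is one small lacuna: writing the witnessing Schur triple as $\{x,a,b\}$ with $a,b\in S\cup I$ silently assumes that $x$ fills exactly one slot of the triple. The triple could also be $x+x=2x$ with $2x\in S\cup I$, which is not of that form. It resolves just like your other cases --- if $2x\in S$ the first alternative of clause~(ii) gives a loop at $x$, while if $2x\in I\subseteq B$ then $\{x,x,2x\}$ is a Schur triple inside the sum-free set $B$ --- but it is a genuinely separate case and should be listed.

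Your closing remark deserves more than parenthetical status: as stated the lemma really does need $B\cap S=\emptyset$ (equivalently, that $I$ be chosen as $(S\cup I)\cap B$). If there is some $x\in(B\cap S)\setminus I$, then $S\cup I\cup\{x\}=S\cup I$ is still sum-free, so the maximality of $S\cup I$ yields no Schur triple through $x$; and one checks, using that $S$ and $S\cup I$ are sum-free and $x\in S$, that $x$ has no loop and no neighbour in $I$ within $L_S[B]$, so $I$ fails to be a maximal independent set. A concrete witness: $n=4$, $B=S=\{3,4\}$, $I=\emptyset$ gives $S\cup I$ maximal sum-free in $[4]$, yet $L_S[B]$ is edgeless and loopless, so $\emptyset$ is not maximal. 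Every application in the paper takes $B$ and $S$ from disjoint ranges, so this never bites, but the hypothesis (or the choice $I=(S\cup I)\cap B$) should be made explicit rather than left to the applications.
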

The  next lemma will allow us to apply (\ref{htnew}) to certain link graphs.
\begin{lemma}\label{free}
Suppose that $B,S \subseteq [n] $ such that $S$ is sum-free and $\max (S) <\min(B)$. Then $G:=L_S[B]$ is triangle-free.
\end{lemma}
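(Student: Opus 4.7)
The plan is to show that any triangle of $G := L_S[B]$ on three distinct vertices would force three elements of $S$ to form a Schur triple within $S$, contradicting that $S$ is sum-free. Since loops do not create triangles on three distinct vertices, the loop edges of $G$ can be ignored for this argument.

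First I would unpack what a non-loop edge of $G$ forces under the hypothesis $\max(S) < \min(B)$. If $uv \in E(G)$ with $u, v \in B$ distinct, then by definition there exists $z \in S$ such that $\{u,v,z\}$ is a Schur triple. Since $z \le \max(S) < \min(B) \le u, v$, the element $z$ is strictly smaller than both $u$ and $v$, so it cannot play the role of the sum in the triple. Hence either $u + z = v$ or $v + z = u$, which in either case forces $z = |u - v|$. In particular, the element of $S$ witnessing the edge $uv$ is uniquely determined as $|u-v|$.

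Now suppose, for contradiction, that $G$ contains a triangle on three distinct vertices, which I may relabel as $x < y < w$ in $B$. Applying the observation to each of the three edges $xy$, $yw$, $xw$ yields
\[
z_1 := y - x, \quad z_2 := w - y, \quad z_3 := w - x,
\]
and all three lie in $S$. But then
\[
z_1 + z_2 = (y - x) + (w - y) = w - x = z_3,
\]
so $\{z_1, z_2, z_3\}$ is a Schur triple contained in $S$, contradicting that $S$ is sum-free.

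There is no real obstacle here: the argument is essentially a one-line telescoping calculation. The only subtle point is the use of $\max(S) < \min(B)$, which is what rules out $z$ being the sum in the Schur triple associated with each edge and thereby pins $z$ down as a difference; without this, $z$ could be the sum $u+v$ and the telescoping identity $z_1 + z_2 = z_3$ would no longer follow.
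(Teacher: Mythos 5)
Your proof is correct and takes essentially the same approach as the paper's: label the triangle $x<y<w$, note that the hypothesis $\max(S)<\min(B)$ forces the witness of each edge to be the difference of its endpoints, and conclude via the telescoping identity that $S$ contains a Schur triple. The only difference is that you spell out explicitly why the witness is pinned down as $|u-v|$, which the paper leaves implicit.
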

\proof
Suppose to the contrary that $z>y>x>\max(S)$ form a triangle in $G$. Then there exists $a,b,c\in S$ such that $z-y=a, y-x=b$ and $z-x=c$, which implies $a+b=c$ with $a,b,c\in S$. This is a contradiction to $S$ being sum-free.
\endproof
In the proof  we will use the simple fact that  if $S\subseteq T \subseteq [n]$ then 
\begin{align}\label{nondec}
f_{\max}(S)\le f_{\max}(T).
\end{align} 

The following lemma is a slightly stronger form of Lemma~$3.2$ from~\cite{BLST}, which deals with containers of `small' size. The proof is exactly the same as in~\cite{BLST}.

\begin{lemma}\label{lem-non-ext}
If $F\in\cF$ has size at most $0.47n$, then $f_{\max}(F)\leq 2^{0.249n}$.
\end{lemma}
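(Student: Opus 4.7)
The plan is to adapt the proof of Lemma~3.2 in~\cite{BLST}. The strategy is to associate to each maximal sum-free subset $S \in \msf(F)$ a maximal independent set in a carefully chosen auxiliary graph, and then invoke the Moon--Moser bound ${\rm MIS}(G) \le 3^{|V(G)|/3}$. Since $|F|\le 0.47n$, one has
\[
3^{|F|/3}\;\le\;3^{0.47n/3}\;\le\;2^{0.2484n},
\]
which sits below the target $2^{0.249n}$ with a slack of roughly $2^{6\cdot 10^{-4}n}$ available to absorb auxiliary enumeration factors.

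Concretely, I would decompose $F = A \cup B$ with $B \subseteq F$ a sum-free subset of $[n]$ and $|A|$ a small linear function of $n$ (say $|A|\le 5\cdot 10^{-4}n$); the existence of such a decomposition follows from Lemma~\ref{lem-container}(i), since $F$ has only $o(n^2)$ Schur triples, combined with the structural control on $F$ afforded by being a container. With such a decomposition in hand, for each $S\in\msf(F)$ set $S_0 := S\cap A$ and $I:= S\cap B$, so that $S_0$ and $B$ are both sum-free. Enumerating over the $2^{|A|}$ choices of the sum-free set $S_0 \subseteq A$, and applying Lemma~\ref{lem-mis} to $S_0$ and $B$, one concludes that $I$ is a maximal independent set of the link graph $L_{S_0}[B]$. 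Moon--Moser then gives at most $3^{|B|/3}\le 3^{|F|/3}\le 2^{0.2484n}$ choices for $I$ given $S_0$. Combining,
\[
f_{\max}(F)\;\le\;2^{|A|}\cdot 3^{|F|/3}\;\le\;2^{5\cdot 10^{-4}n}\cdot 2^{0.2484n}\;\le\;2^{0.249n}
\]
for $n$ sufficiently large.

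The main obstacle is constructing the decomposition $F=A\cup B$ with $|A|$ of the required small linear size: the qualitative bound ``$F$ has $o(n^2)$ Schur triples'' alone does not produce a sufficiently small vertex cover for the $3$-uniform Schur-triple hypergraph on $F$ by generic hypergraph arguments (greedy removal yields only a $\Theta(n\log n)$ cover in the worst case), so one must exploit additional arithmetic features of Schur triples to pin down a ``bad'' set $A$ of the correct order. This is precisely the technical core of Lemma~3.2 in~\cite{BLST}, and since the current statement only strengthens the bound marginally (replacing the $o(1)$ in the exponent by the fixed gap between $0.2484$ and $0.249$), the argument there transfers to the present setting verbatim.
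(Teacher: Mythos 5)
Your proof is correct and follows exactly the route the paper takes (the paper simply refers to Lemma~3.2 of~\cite{BLST}, whose argument is the one you describe: decompose the container into a small set $A$ plus a sum-free set $B$, enumerate sum-free choices $S_0\subseteq A$, and bound the extensions in $B$ by applying Moon--Moser to the link graph $L_{S_0}[B]$, using Fact~\ref{newfact} to dispose of loops). However, the ``main obstacle'' you flag in your final paragraph is not actually an obstacle here: the decomposition $F=A\cup B$ with $B$ sum-free and $|A|=o(n)$ is precisely the content of Green's arithmetic removal lemma (Corollary~1.6 in~\cite{G-R}, which the paper sets up for exactly this purpose), applied to $F$ using the fact that containers have $o(n^2)$ Schur triples by Lemma~\ref{lem-container}(i). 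So there is no need to ``exploit additional arithmetic features'' beyond citing that lemma, and your worry about greedy covers giving only $\Theta(n\log n)$ does not arise. Once that is noted, your calculation $2^{|A|}\cdot 3^{|F|/3}\le 2^{5\cdot10^{-4}n}\cdot 2^{0.2484n}\le 2^{0.249n}$ closes the argument, matching the paper.
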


Thus, to show that (\ref{ultimateaim}) holds it suffices to show that there is a constant $C_i$ such that 
in total, type (b) and (c) containers house $(C_i\pm \eta /2)2^{n/4}$ maximal sum-free subsets of $[n]$. In Section~\ref{subsec-interval} we deal with containers of type~(b) and in Section~\ref{subsec-odd} we deal with containers of type~(c).

%%%%%%%%%%%%%%%%%%%%%%%%%%%%%%%%%%%%%%%%%%%%%%
%%%%%%%%%%%%%%%%%%%%%%%%%%%%%%%%%%%%%%%%%%%%%%

\subsection{Type (b) containers}\label{subsec-interval}

The following lemma allows us to restrict our attention to type~(b) containers that have at most $\eps n$ elements from $[n/2]$. 

\begin{lemma}\label{lem-CE}
Let $F \in \cF$ be a container of type (b) so that $|F \cap [n/2]|\ge \ep n$. 
Then $f_{\max}(F) \leq 2^{(1/4- \delta)n}$.
\end{lemma}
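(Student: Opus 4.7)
The plan is to apply the standard link-graph approach together with the Hujter--Tuza bound. Let $F = A \cup B$ be the type (b) decomposition with $|A| \leq \al n$ and $B \subseteq [(1/2-\ga)n, n]$ sum-free; we may take $A$ and $B$ disjoint.

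The first step is to extract the key consequence of the hypothesis: namely $\ga \geq \eps/2$. Observe that $F \cap [n/2] \subseteq A \cup (B \cap [(1/2-\ga)n, n/2])$, and the interval $[(1/2-\ga)n, n/2]$ contains at most $\ga n + 1$ integers. Thus
$$
\eps n \;\leq\; |F \cap [n/2]| \;\leq\; \al n + \ga n + 1,
$$
which forces $\ga \geq \eps/2$ for $n$ large (using $\al \ll \eps$). Consequently $|B| \leq (1/2-\ga)n \leq (1/2 - \eps/2)n$.

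Next, for each $M \in \msf(F)$, write $M = S \cup I$ with $S := M \cap A$ (sum-free) and $I := M \cap B$. Since $B$ is sum-free, Lemma~\ref{lem-mis} gives that $I$ is a maximal independent set of the link graph $L_S[B]$. Summing over the (at most $2^{|A|}$) choices of $S$,
$$
f_{\max}(F) \;\leq\; 2^{\al n} \cdot \max_{S \subseteq A \text{ sum-free}} \mis(L_S[B]).
$$
We may assume without loss of generality that $A \subseteq [1, (1/2-\ga)n - 1]$ (any element of $A$ lying in $[(1/2-\ga)n, n]$ can be absorbed into $B$ by a minor refinement of the container decomposition, e.g.\ via Green's removal lemma applied to $F$, with the resulting $o(n)$ increase in $|A|$ absorbed into the choice of $\al$). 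Then $\max(S) < (1/2-\ga)n \leq \min(B)$, so Lemma~\ref{free} implies $L_S[B]$ is triangle-free. Applying the Hujter--Tuza bound~(\ref{htnew}), together with Fact~\ref{newfact} to discard any loops, yields
$$
\mis(L_S[B]) \;\leq\; 2^{|B|/2} \;\leq\; 2^{(1/4 - \eps/4)n}.
$$

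Combining,
$$
f_{\max}(F) \;\leq\; 2^{\al n + (1/4 - \eps/4)n} \;=\; 2^{(1/4 - \eps/4 + \al)n} \;\leq\; 2^{(1/4 - \de)n},
$$
where the last inequality uses $\al + \de < \eps/4$, which follows from the hierarchy $\al \ll \de \ll \eps$. The argument is otherwise short and direct; the main delicate point is the WLOG reduction on the decomposition to ensure triangle-freeness of the link graph through Lemma~\ref{free}, and everything else is a direct application of tools already developed in Sections~\ref{aux} and~\ref{tools}.
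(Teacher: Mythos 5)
Your high-level plan is appealing — use the small exceptional part $A$ of the type~(b) decomposition as the link-set and the sum-free part $B$ as the ambient set, then apply Hujter--Tuza — and it would give a much shorter proof than the paper's. Unfortunately the ``WLOG $A \subseteq [1,(1/2-\ga)n-1]$'' step, which is the only thing standing between you and the bound, is a genuine gap and does not seem to be fixable by the device you propose.

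The problem is that an element $a\in A\cap[(1/2-\ga)n,n]$ cannot simply be ``absorbed into $B$'': $B\cup\{a\}$ need not be sum-free, so the new set is no longer admissible as the $B$-part of a type~(b) decomposition. Green's removal lemma does not repair this either: applied to $F$ it produces a sum-free $B'$ with $|F\setminus B'|=o(n)$, but it says nothing about where $B'$ lives. In fact $B'$ \emph{must} contain $\Omega(\eps n)$ elements of $[n/2]$ here, since $|F\cap[n/2]|\ge\eps n \gg o(n)$, so $B'$ is nothing like a subset of $[(1/2-\ga)n,n]$ and the new exceptional part is not confined to $[1,(1/2-\ga)n-1]$ either. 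Without the WLOG, $\max(S)$ can exceed $\min(B)$, and then Lemma~\ref{free} is unavailable and $L_S[B]$ really can contain triangles: for example a triple $x<y<z$ in $B$ with $x+y=s_1$, $z-y=s_2$, $z-x=s_3$, all in $S$, has no forced Schur relation among $s_1,s_2,s_3$ (it needs $s_1$ near $n$, which is exactly the kind of element you are trying to rule out of $S$). One might hope to delete the few offending vertices and apply Corollary~\ref{use}, but the vertices of $B$ incident to a ``sum edge'' occupy an interval of size roughly $2\ga n$ around $n/2$, and $\ga$ can be as small as $\Theta(\eps)$; paying $2^{\Theta(\ga n)}$ to remove them wipes out the gain $2^{-\ga n/2}$ coming from $|B|\le(1/2-\ga)n$, so the argument breaks even at best.

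This obstruction is precisely why the paper's proof takes a different and longer route. It partitions $F$ not as $A\cup B$ but as $F_1=F\cap[n/2]$ and $F_2=F\cap[n/2+1,n]$, so that $\max(F_1)<\min(F_2)$ automatically and $F_2$ is automatically sum-free, making $L_S[F_2]$ triangle-free for free. The price is that $|F_1|=cn$ can be as large as $\approx 0.03n$, so the crude bound $2^{|F_1|}\cdot 2^{|F_2|/2}$ only gives $2^{(1/4+O(\alpha))n}$; the work in Claims~\ref{moved}--\ref{blb} (entropy of choosing small $S$, missing elements near $n$ forcing loops, and $P_3$ structure depending on $\min(S)$ and $\min_2(S)$) is exactly what recovers the lost $2\delta n$ in the exponent. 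Your argument skips all of this, and the WLOG is where the missing work is hiding.
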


\begin{proof}
Define $c\ge \ep$ so that $|F \cap [n/2]|=cn$.
Since $F$ is of type (b), $F=A \cup B$ where $|A|\leq \alpha n$ and $B$ is sum-free where $\min (B) \geq 0.47n$. Therefore $cn \leq (0.03+\alpha)n$.

As $|F \cap [n/2]|= cn$, $|B \cap [0.47n, n/2]|\geq (c-\alpha)n$ and so trivially 
 $|(B+B)\cap [0.94n, n]|\ge (2c-4\alpha )n$.
Therefore, since $B$ is sum-free, $F$ is missing at least $(2c-4\alpha) n$ numbers from $[0.94n,n]$. 
Partition $F=F_1\cup F_2$ where $F_1:=F\cap [n/2]$ and $F_2:=F\setminus F_1$. Note that $|F_2|\le (1/2-2c+4\alpha)n$.

The following  observation is a key idea for the proof  of this lemma. Every maximal sum-free subset of $[n]$ in $F$ can be built in the following two steps. First, fix an arbitrary sum-free set $S\subseteq F_1$. Next, extend $S$ in $F_2$ to a maximal one. Since $|F_1|=cn$, there are at most $2^{cn}$ ways to pick $S$. By Lemma~\ref{lem-mis}, the number of choices for the second step is at most the number of maximal independent sets $I$ in $L_S[F_2]$.

\begin{claim}\label{moved}
There are at most $2^{(1/4-\ep/20)n}$ maximal sum-free subsets $M$ of $[n]$ in $F$ such that $|M\cap F_1| \leq cn /4$. 
\end{claim}
\proof
Choose an arbitrary sum-free set $S\subseteq F_1$ such that $|S|\leq cn/4$ (there are at most $cn\binom{cn}{cn/4}/4$ choices for $S$). By Lemma~\ref{free}, $L:=L_S[F_2]$ is triangle-free. So ${\rm MIS}(L)\leq 2^{|F_2|/2} \leq 2^{(1/4-c+2\alpha)n}$
by~\eqref{htnew}. Thus, the number of maximal sum-free subsets of $[n]$ in $F$ with at most $cn/4$ elements from $F_1$ is at most
$$\frac{cn}{4} {cn\choose\frac{cn}{4}}\cdot 2^{(1/4-c+2\alpha)n}\leq 2^{(1/4-c/10+2\alpha)n}\leq 2^{(1/4-\ep /20)n},$$
where the last inequality follows since $\alpha \ll \eps \leq c$.  
\endproof
Let $S \subseteq F_1$ be sum-free such that $|S|> cn/4$.
Claim~\ref{moved} together with our earlier observation implies that to prove the lemma it suffices to show that
 ${\rm{MIS}}(L_S[F_2])\le 2^{(1/4-c-2\delta )n}$. 

By Lemma~\ref{free}, $L_S[F_2]$ is triangle-free.
We may assume that  $F$ is missing at most $(2c+4\delta)n$ numbers from $[0.94n,n]$.
Indeed, otherwise by~\eqref{htnew}, ${\rm{MIS}}(L_S[F_2]) \le 2^{(1/4-c-2\delta)n}$, as required.
\begin{claim}\label{claimy}
We may assume that $(2c-4\alpha)n \le |[n/2+1,n]\setminus F|\le (2c+9 \delta)n$.
\end{claim}
\begin{proof}
Since we already know that $(2c-4\alpha)n \le |[0.94n,n]\setminus F|\le (2c+4 \delta)n$, to prove the claim we only need to prove that $F$ is missing at most $5 \delta n$ elements from $[0.5n,0.94n]$. Suppose to the contrary that $F$ is missing at least $5 \delta n$ numbers from $[0.5n,0.94n]$. Then $|F_2|\le (1/2-2c+4\alpha -5  \delta)n \leq (1/2-2c-4 \delta)n$ and so by~\eqref{htnew}, ${\rm{MIS}}(L_S[F_2])\le 2^{(1/4-c-2 \delta)n}$. 
\end{proof}

\begin{claim}\label{mbound}
Set $m:=\min (S)$. Suppose that $m < (1/4-2c)n$ or $m > (1/4+\eps)n$. Then  ${\rm{MIS}}(L_S[F_2])\le 2^{(1/4-c-2 \delta)n}$.
\end{claim} 
\begin{proof}
Suppose  that $m> (1/4+\ep) n$.
 Then in $L:=L_S[F_2]$ a vertex  $x \in [(3/4-\eps)n,(3/4+\eps) n]=:N$
is either isolated or adjacent only to itself. 
Thus ${\rm {MIS}}(L)= {\rm {MIS}}(L')$ where $L':= L\setminus N$. Recall that $(2c-4\alpha)n \le |[0.94n,n]\setminus F|$.
Hence,~\eqref{htnew} implies that, ${\rm{MIS}}(L)\le 2^{(1/4-c+2\alpha- \eps)n}\leq 2^{(1/4-c-2 \delta)n}$.

Now suppose that $m< (1/4-2c)n$. Then $L:=L_S[F_2]$ contains at least $100  \delta n$ vertex-disjoint copies of $P_3$. Indeed, consider the set of all $P_3$s with vertex set $\{n/2+i,n/2+m+i,n/2+2m+i\}$ for all $1\le i\le n/2-2m$. Since $m\le (1/4-2c)n$, we have at least $n/2-2m\ge 4cn$ such $P_3$s. By Claim~\ref{claimy}, at most $(2c+9 \delta)n$ elements from $[n/2+1,n]$ are not in $F$. Hence, $L$ contains at least $(2c-9 \delta)n\ge 700  \delta n$ of these copies of $P_3$. Note that these copies of $P_3$ may not be vertex-disjoint, but given one of these  copies $P$ of $P_3$, there are at most $6$ copies of $P_3$ of this type that intersect $P$ in $L$.
So $L$ contains a collection of  $100 \delta n$ vertex-disjoint copies of $P_3$.
Using Lemma~\ref{lem-mis-p3}, we have ${\rm{MIS}}(L)\le 2^{(1/4-c+2\alpha)n-4 \delta n}\leq 2^{(1/4-c-2 \delta)n}$. \end{proof}

 By Claim~\ref{mbound}  we may now  assume that  $(1/4-2c)n \leq m \leq (1/4+\eps)n$.

\begin{claim}\label{blb} Set $b:=\min _2 (S)$.
If $b\leq (1/2-4c)n$ then ${\rm{MIS}}(L_S[F_2])\le 2^{(1/4-c-2 \delta )n}$.
\end{claim}
\begin{proof}
%Suppose that $b\le n/2-4cn$. We consider two cases separately. Let us first assume that $b-m>\ep n+2cn$ for some constant $\ep>0$. We claim that $L:=L_S[F_2]$ contains at least $c'n/3-o(n)$ 
%vertex-disjoint $P_3$s, for some constant $c'>0$. Consider the set of all $P_3$s with vertex set $\{n/2+i,n/2+b+i,n/2+b-m+i\}$ for all $1\le i\le \min\{b-m,n/2-b\}$. Note that since $b\le n/2-4cn$, we have $n/2-b\ge 4cn$. Hence there are at least $c'n+2cn$ $P_3$s in this collection, where $c':=\min\{\ep ,2c\}$. Recall that $F$ might be missing up to $2cn+o(n)$ elements from $[n/2+1,n]$. Hence, $L$ contains at least $c'n-o(n)$ of these copies of $P_3$. It is not hard to see that at least one-third of these $P_3$s are vertex-disjoint. Hence, Lemma~\ref{lem-mis-p3} implies that ${\rm{MIS}}(L)\le 2^{(n/4-cn+o(n))-c'n/75}$. 

%Now let us assume that $b-m\le2cn+\ep n$. We claim that $L:=L_S[F_2]$ has at least $cn/5-o(n)$ vertex disjoint $P_3$'s. Consider the set of all $P_3$'s with vertex set $\{n/2+i,n/2+b+i,n/2+b-m+i\}$ for all $1\le i\le n/2-b$. Since $b\le 0.5n-4cn$, we have at least $n/2-b\ge 4cn$ such $P_3$'s. Note that $F$ might be missing upto $2cn+o(n)$ elements from $[n/2+1,n]$. Hence, there are at least $2cn-o(n)$ $P_3$'s left. It is not hard to see that at least $cn/5-o(n)$ of them are vertex disjoint. Hence, using Lemma~\ref{lem-mis-p3}, ${\rm{MIS}}(L)\le 2^{n/4-cn-cn/25+o(n)}$.

We claim that $L:=L_S[F_2]$ contains at least $100 \delta n$ vertex-disjoint copies of $P_3$. Consider the set of all $P_3$s with vertex set $\{n/2+i,n/2+b+i,n/2+b-m+i\}$ for all $1\le i\le n/2-b$. Since $b\le n/2-4cn$, we have at least $n/2-b\ge 4cn$ such $P_3$s. Note that $F$ might be missing up to $(2c+9 \delta)n$ elements from $[n/2+1,n]$. Hence, $L$ contains at least $(2c-9 \delta)n \geq 700  \delta n$ of these copies of $P_3$.  Note that these copies of $P_3$ may not be vertex-disjoint, but given one of these  copies $P$ of $P_3$, there are at most $6$ copies of $P_3$ of this type that intersect $P$ in $L$.
So $L$ contains a collection of  $100 \delta n $ vertex-disjoint copies of $P_3$. Hence,  Lemma~\ref{lem-mis-p3} implies that ${\rm{MIS}}(L_S[F_2])\le 2^{(1/4-c-2 \delta)n}$.
\end{proof}
So now we may assume that $|S|> cn/4$, $(1/4-2c)n \leq m \leq (1/4+\eps)n$  and $b\ge (1/2-4c)n$. Thus, at least $cn/4$ elements from $[(3/4-6c)n,(3/4+\eps)n]$ lie in $S+m$. Every element of $S+m$ is either missing from $F_2$ or has a loop in $L_S [F_2]$. Recall that  $F_2$ is missing $(2c -4\alpha)n$ elements from $[0.94n,n]$. Thus, altogether at least $2cn-4\alpha n +cn/4\geq 2cn +4 \delta n$ elements from $[n/2+1,n]$ are either missing from
 $F_2$ or have a loop in $L_S [F_2]$. Hence, we have,
$${\rm{MIS}}(L_S[F_2])\le 2^{(1/4-c-2 \delta)n}.$$
\end{proof}

\begin{lemma}\label{lem-CE*}
 Let $F \in \cF$ be a container of type (b) so that $|F \cap [n/2]|\leq \eps n$. 
Let $f^* _{\max} (F)$ denote the number of maximal sum-free subsets $M$ of $[n]$ in $F$ that  satisfy at least one of the following properties:

(i) $\min (M) > (1/4+2\eps) n$ or $\min (M) < (1/4-175\eps) n$;

(ii) $\min _2 (M) \leq (1/2-350\eps) n$.

\noindent
Then $f^*_{\max}(F) \leq 2^{(1/4-\eps)n}$.
\end{lemma}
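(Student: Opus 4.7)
The plan is to imitate the link-graph case analysis of Lemma~\ref{lem-CE} in the complementary regime $|F\cap[n/2]|\le\eps n$, partitioning the counted $M$'s into three mutually exclusive cases: (I) $\min(M)>(1/4+2\eps)n$; (II) $\min(M)<(1/4-175\eps)n$; (III) $\min(M)\in[(1/4-175\eps)n,(1/4+2\eps)n]$ with $\min_2(M)\le(1/2-350\eps)n$. Write $F_1:=F\cap[n/2]$ and $F_2:=F\setminus F_1\subseteq[n/2+1,n]$, so $|F_1|\le\eps n$, $|F_2|\le n/2$, and (crucially) $F_2$ is automatically sum-free since $x+y>n$ for $x,y\in F_2$. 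For each counted $M$ let $S_0:=M\cap F_1$; by Lemma~\ref{lem-mis}, $M\setminus S_0$ is a maximal independent set in $L:=L_{S_0}[F_2]$, which is triangle-free by Lemma~\ref{free}. With at most $2^{|F_1|}\le 2^{\eps n}$ choices for $S_0$, the task reduces to bounding ${\rm MIS}(L)\le 2^{n/4-2\eps n}$ in each case.

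In case (I), mimicking the first half of Claim~\ref{mbound}, each $x\in F_2\cap N$ with $N:=[(3/4-2\eps)n,(3/4+2\eps)n]$ is isolated in $L$ apart from possible loops: each of the three orientations of a putative Schur triple $\{x,y,z\}$ with $y\in F_2$ and $z\in S_0$ forces $z\le(1/4+2\eps)n$, contradicting $S_0\subseteq((1/4+2\eps)n,n/2]$. Applying Fact~\ref{newfact} and~\eqref{htnew} to $L$ with the vertices $F_2\cap N$ removed gives ${\rm MIS}(L)\le 2^{(|F_2|-|F_2\cap N|)/2}$. Since $F_2\cup N\subseteq[n/2+1,n]$, the inclusion--exclusion estimate $|F_2\cap N|\ge|F_2|+|N|-n/2$ forces the exponent to be at most $n/4-2\eps n$ whenever $|F_2|\ge n/2-4\eps n$; otherwise the trivial bound ${\rm MIS}(L)\le 2^{|F_2|/2}$ already dominates.

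Cases (II) and (III) both exploit $P_3$-packings via Lemma~\ref{lem-mis-p3}. For (II), the triples $\{n/2+i,n/2+m+i,n/2+2m+i\}$ of Claim~\ref{mbound} (with $m=\min(M)$ and $1\le i\le n/2-2m$) give $\ge 350\eps n$ copies of $P_3$; at most $3(n/2-|F_2|)$ of these are destroyed by elements missing from $F_2$, and greedy thinning (each such triple intersects $\le 4$ others of the same form) leaves $\ge 50\eps n$ vertex-disjoint $P_3$s in $L$ whenever $|F_2|\ge n/2-4\eps n$. Lemma~\ref{lem-mis-p3} then yields ${\rm MIS}(L)\le 2^{|F_2|/2-2\eps n}\le 2^{n/4-2\eps n}$, with the complementary range of $|F_2|$ handled trivially. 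Case (III) is identical using Claim~\ref{blb}'s triples $\{n/2+i,n/2+b+i,n/2+b-m+i\}$ with $b=\min_2(M)$.

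The main technical obstacle is case (I): unlike the $P_3$-packing cases, the savings come solely from interval isolation and rely on the inclusion--exclusion identity $|F_2\cap N|\ge|F_2|+|N|-n/2$ from $F_2,N\subseteq[n/2+1,n]$, forcing a two-sided case split on $|F_2|$ that must work however much of $[n/2+1,n]$ is missing from $F$. Once this is handled, combining the three per-case bounds with the $2^{\eps n}$ choices of $S_0$ gives the target $f^*_{\max}(F)\le 2^{(1/4-\eps)n}$, the constant loss from the three-way union bound being absorbed by the explicit slack built into the thresholds $2,175,350$ (in particular, in cases (II) and (III) the vertex-disjoint $P_3$-count can be pushed to yield exponent savings strictly exceeding $\eps n$).
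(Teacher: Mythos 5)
Your argument is correct and follows essentially the same route as the paper: partition the counted $M$'s according to $\min(M)$ and $\min_2(M)$, build each $M$ from $S_0:=M\cap F_1$ plus a maximal independent set in the triangle-free link graph $L_{S_0}[F_2]$, and bound $\mathrm{MIS}$ via interval isolation together with~\eqref{htnew} when $\min(M)$ is large, and via $P_3$-packings together with Lemma~\ref{lem-mis-p3} when $\min(M)$ or $\min_2(M)$ is small. The one simplification you miss is that the paper invokes the monotonicity fact~\eqref{nondec} at the outset to replace $F$ by $F_1\cup[n/2+1,n]$, so that one may assume $F_2=[n/2+1,n]$ with no elements missing; this eliminates your two-sided case split on $|F_2|$, the inclusion--exclusion bound on $|F_2\cap N|$, and the bookkeeping about how many of the $P_3$'s are destroyed by missing elements, since the isolated-vertex count and $P_3$-count are then exact. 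Your version does go through (the inequality $|F_2\cap N|\geq |F_2|+|N|-n/2$ is valid regardless of sign, and the triple-survival accounting with the ``each triple meets $\le 4$ others'' thinning is correct), so the extra work is harmless, but with~\eqref{nondec} roughly half the proposal can be deleted. One small point: the observation that $F_2$ is ``automatically sum-free'' is never used; what matters for triangle-freeness of $L_{S_0}[F_2]$ is that $S_0$ is sum-free and $\max(S_0)<\min(F_2)$, which is exactly the hypothesis of Lemma~\ref{free}.
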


\begin{proof}
Since $F$ is of type (b), $F=A \cup B$ for some $A,B$ where $|A|\leq \alpha n$ and $B$ is sum-free where $\min (B) \geq 0.47n$. Partition $F=F_1\cup F_2$ where $F_1:=F\cap [n/2]$ and $F_2:=F\setminus F_1$.
So $|F_1|\leq \eps n$ by the hypothesis of the lemma. By (\ref{nondec}) we may assume that $F_2=[n/2+1, n]$.

 Every maximal sum-free subset of $[n]$ in $F$ that satisfies (i) or (ii) can be built in the following two steps. First, fix a sum-free set $S\subseteq F_1$. Next, extend $S$ in $F_2$ to a maximal one.
To give an upper bound on the sets $M$  satisfying (i) we choose $S \subseteq F_1$ where $m:=\min (S)$ is such that $m> (1/4+2\ep) n$ or $m < (1/4-175 \eps) n$ (there are at most $2^{|F_1|}\leq2^{\eps n}$ choices
for $S$). Then by arguing similarly to Claim~\ref{mbound} we have that ${\rm MIS} (L_S [F_2]) \leq 2^{(1/4-2\ep) n}$.

To give an upper bound on the sets $M$  satisfying (ii) we choose $S \subseteq F_1$ where $b:=\min_2 (S)$ is such that $b\leq n/2-350\ep n$ (there are at most $2^{|F_1|}\leq 2^{\eps n}$ choices
for $S$). Then by arguing similarly to Claim~\ref{blb} we have that ${\rm MIS} (L_S [F_2]) \leq 2^{(1/4-2\ep) n}$. 

Altogether,  this implies that $f^*_{\max}(F) \leq 2^{(1/4-\eps) n}$ as desired.
\end{proof}

Throughout this subsection, given a maximal sum-free set $M$  we write $m:=\min (M)$ and $b:=\min_2 (M)$ and define $S:=(M\cap [n/2])\setminus \{m\}$.
Lemmas~\ref{lem-CE} and~\ref{lem-CE*} imply that, to count the number of maximal sum-free subsets of $[n]$ lying in type (b) containers, it now suffices to count the number of maximal sum-free sets $M$ with the following structure:

($\alpha$)  $m\in [(1/4-175\eps)n, (1/4+175\eps)n]$.

($\beta$) $b\geq (1/2-350\eps) n$.

\medskip

In particular, the next lemma shows that almost all of the maximal sum-free subsets of $[n]$ that satisfy ($\alpha$) and ($\beta$) lie in type (b) containers only.

\begin{lemma}\label{noovercount}
There are at most $\eps 2^{n/4}$ maximal sum-free subsets of $[n]$ that satisfy ($\alpha$) and ($\beta$) and that lie in type (a) or (c) containers.
\end{lemma}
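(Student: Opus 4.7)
The plan is to handle type (a) and type (c) containers separately, exploiting that each container type already forces the count of maximal sum-free sets (even without invoking ($\alpha$) or ($\beta$)) to be much smaller than $2^{n/4}$.

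Type (a) is immediate: Lemma~\ref{lem-non-ext} gives $f_{\max}(F)\leq 2^{0.249n}$ for each such $F$, and since $|\mathcal F|\leq 2^{\alpha n}$ with $\alpha\ll \eps$, the total contribution from type (a) containers is at most $2^{(0.249+\alpha)n}$, which is exponentially smaller than $\eps 2^{n/4}$.

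For a type (c) container $F=A\cup B$ with $B\subseteq O$ and $|A|\leq \alpha n$, the key observation is that $M\cap E\subseteq F\cap E\subseteq A$, so $|M_E|:=|M\cap E|\leq \alpha n$ is automatically tiny. Combined with ($\alpha$) and ($\beta$), this places the odd part $M\cap O\setminus\{m\}$ inside $T:=O\cap[(1/2-350\eps)n,n]$, a set of size at most $(1/4+175\eps)n+1$. I would count such $M$ in three stages: choose $m\in [(1/4-175\eps)n,(1/4+175\eps)n]$, at most $350\eps n$ choices; choose $M_E\subseteq A$, at most $2^{|A|}\leq 2^{\alpha n}$ choices; and setting $S:=M_E\cup\{m\}$, observe via Lemma~\ref{lem-mis} that $M\setminus S\subseteq T$ is a maximal independent set in $L:=L_S[T]$. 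For the last stage, Fact~\ref{newfact} lets me remove any loops from $L$ without decreasing $\mathrm{MIS}$, and Moon--Moser then gives $\mathrm{MIS}(L)\leq 3^{|T|/3}\leq 2^{0.14n}$ for $\eps$ small enough. Multiplying and summing over the at most $2^{\alpha n}$ type (c) containers gives a total of at most $2^{2\alpha n+0.14n+O(\log n)}\leq 2^{0.15n}$, which is far smaller than $\eps 2^{n/4}$.

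The main point to verify is that the hypotheses of Lemma~\ref{lem-mis} hold in the third stage: $S$ is sum-free (immediate since $S\subseteq M$), $I:=M\setminus S$ is sum-free (same reason), $I\subseteq T$ (by ($\alpha$), ($\beta$), and $B\subseteq O$, splitting on the parity of $m$), and $M=S\cup I$ is maximal sum-free in $[n]$ by assumption. Beyond this, the argument is routine: one could sharpen the MIS bound to $2^{|T|/2}$ by noting that $L_{\{m\}}[T]$ is triangle-free via Lemma~\ref{free} (since $\max(\{m\})=m<\min(T)$), but the cruder Moon--Moser bound already leaves considerable slack against the target $\eps 2^{n/4}$, so no finer analysis is needed.
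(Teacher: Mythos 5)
Your proof is correct and follows essentially the same strategy as the paper's: dispatch type (a) via Lemma~\ref{lem-non-ext}, and for type (c) fix the container, $m$, and the small even part $S\setminus\{m\}\subseteq A$, then bound the odd completion by Moon--Moser applied to a link graph on roughly $(1/4+175\eps)n$ vertices. The only cosmetic difference is that you take the host set for Lemma~\ref{lem-mis} to be $T=O\cap[(1/2-350\eps)n,n]$ rather than the paper's container-dependent $F'=B\cap[(1/2-350\eps)n,n]$, which is a valid (slightly larger) choice and changes nothing substantive.
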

\proof
By Lemma~\ref{lem-non-ext}, at most $2^{0.249n} \leq \eps 2^{n/4} /2$ such maximal sum-free subsets of $[n]$ lie in type (a) containers.

Suppose that $M$ is a maximal sum-free subset of $[n]$ that satisfies ($\alpha$) and ($\beta$) and lies in a type  (c) container $F$. 
Thus, $F=A\cup B$ where $|A|\leq \alpha n$ and $B \subseteq O$.
Define $F':=B \cap [n/2-350 \eps n, n]$. So,
$|F'|\leq  (1/4+175\eps)n$. By Lemma~\ref{lem-mis}, $M=I \cup S$ where $\min (S)=m$ for some $m\in [(1/4-175\eps)n, (1/4+175\eps)n]$, $(S\setminus \{m\})\subseteq A$ and 
$I$ is a maximal independent set in $G:=L_S [F']$. By the Moon--Moser bound,
$${\rm MIS} (G) \leq 3^{(1/12 +60\eps)n} \leq 2^{(1/4-\eps)n}.$$
In total, there are at most $2^{\alpha n}$ choices for $F$,  at most $350\eps n$ choices for $m$ and at most $2^{\alpha n}$ choices for $S\setminus \{m\}$.
Thus, there are at most
$$2^{\alpha n} \times 350\eps n \times 2^{\alpha n}  \times 2^{n/4-\eps n} \leq \eps 2^{n/4} /2$$
 maximal sum-free subsets of $[n]$ that satisfy ($\alpha$) and ($\beta$) and that lie in type  (c) containers, as desired.
\endproof

For the rest of this subsection, we focus on counting the maximal sum-free sets that satisfy~($\alpha$) and ($\beta$). 
Fix $m,b$ such that $m\in [(1/4-175\eps)n, (1/4+175\eps)n]$ and $b\geq (1/2-350\eps) n$.
Define $t:=|m-n/4|$ and $D:=n/2-b$, so $t,D\leq 350 \eps n$. (Notice that if $b>n/2$, then $D$ is negative.) %We also define $S:=\{i: n/2-i\in A\cap[n/2]\setminus\{m\}\}$ and let $s:=|S|\le D$. 
Let $S \subseteq [b,n/2]$ such that $b \in S$, $S \cup \{m\}$ is sum-free and set $s:=|S|\leq D$. In the case when $b>n/2$, we define $S:=\emptyset$.

 Denote by $L:=L(n,m,S)$ the link graph of $S\cup\{m\}$ on vertex set $[n/2+1, n]$. So $L$ is triangle-free by Lemma~\ref{free}. 
%Then we can bound the number of sum-free sets as follows:
%\begin{eqnarray}\label{eq-total}
%\# A \le \#\mbox{choices of }m \cdot \#\mbox{choices of } S \cdot {\rm{MIS}}(L).
%\end{eqnarray}
We will need the following two bounds on the number of maximal independent sets in $L$.
\begin{lemma}\label{lem-mis-L}
We have the following two bounds on ${\rm{MIS}}(L)$.

(i) ${\rm{MIS}}(L)\le 2^{n/4-D/25}$;

(ii) Let $R$ be defined so that $|S+S|=Rs$. Then ${\rm{MIS}}(L)\le 2^{n/4-(R+1)s/2}$.

\end{lemma}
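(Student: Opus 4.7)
The plan is to prove (i) by exhibiting $D$ vertex-disjoint copies of $P_3$ in the triangle-free graph $L$ and applying Lemma~\ref{lem-mis-p3}, and to prove (ii) by identifying $(R+1)s$ loop-vertices in $L$ (which cannot lie in any maximal independent set) and then applying~\eqref{htnew} to the resulting loop-free subgraph. Throughout, I use only that $|L|=n/2$ and that $L$ is triangle-free by Lemma~\ref{free}.

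For (i), I may assume $D\ge 1$ (otherwise the claimed bound is $\ge 2^{n/4}$, immediate from~\eqref{htnew}), so that $b\in S$ and $b\le n/2$. For each $y\in[n/2+1,\,n/2+D]$ I take $P_y:=(y,\,y+b,\,y+b-m)$. The two consecutive differences $b\in S$ and $m$ yield the required edges of $P_y$ in $L$. For the third pair, $b-m\notin S$ since $b-m<b=\min S$, and $b-m\ne m$ because $b=2m$ would produce the Schur triple $m+m=b$ inside the sum-free set $S\cup\{m\}$; hence $\{y,y+b-m\}$ is a non-edge, so $P_y$ really is a $P_3$ in $L$. The three vertices of $P_y$ live in the pairwise disjoint intervals $[n/2+1,\,n/2+D]$, $[n-D+1,\,n]$ and $[n-D-m+1,\,n-m]$, all contained in $[n/2+1,n]$; the disjointness follows from $D\le 350\eps n$ being much smaller than both $m$ and $b-m$. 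This immediately gives vertex-disjointness of the $D$ copies, and Lemma~\ref{lem-mis-p3} with $k=D$ yields ${\rm MIS}(L)\le 2^{n/4-D/25}$.

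For (ii), I may assume $s\ge 1$. Every $x\in S+S$ gives a loop at $x$ in $L$ via the Schur triple $\{x,z_1,z_2\}$ with $z_1,z_2\in S$ and $z_1+z_2=x$; similarly every element of $S+m$ is a loop-vertex. Since $m<b=\min S$ we have $m\notin S$, so translation by $m$ is injective on $S$ and $|S+m|=s$. Next I check that the two contributions are disjoint: $S+S\subseteq[2b,n]$ while $S+m\subseteq[b+m,\,n/2+m]$, and the inequality $2b>n/2+m$ follows from $b\ge(1/2-350\eps)n$, $m\le(1/4+175\eps)n$ and the hierarchy $\eps\ll 1$ from~\eqref{hier}. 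Both sets lie in $[n/2+1,n]$, so $L$ has at least $(R+1)s$ loop-vertices. No maximal independent set contains a loop-vertex, hence ${\rm MIS}(L)={\rm MIS}(L')$ where $L'$ is the induced subgraph of $L$ on the non-loop vertices. Since $L'$ is triangle-free on at most $n/2-(R+1)s$ vertices, (\ref{htnew}) gives ${\rm MIS}(L)\le 2^{n/4-(R+1)s/2}$.

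The main obstacles are bookkeeping rather than conceptual: verifying that each $P_y$ is an induced path (not a triangle) in (i) via sum-freeness of $S\cup\{m\}$, and confirming $(S+S)\cap(S+m)=\emptyset$ in (ii) via a single numerical inequality, both of which reduce to the interval constraints on $b$ and $m$ coming from the ranges $m\in[(1/4-175\eps)n,(1/4+175\eps)n]$ and $b\ge(1/2-350\eps)n$ and the smallness of $\eps$.
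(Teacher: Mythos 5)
Your proof is correct and follows essentially the same approach as the paper: for (i) you exhibit the same $D$ vertex-disjoint $P_3$s (your $P_y = (y, y+b, y+b-m)$ with $y = n/2+i$ coincides with the paper's $\{n/2+i, n+i-D, n+i-D-m\}$ since $b = n/2-D$) and invoke Lemma~\ref{lem-mis-p3}, and for (ii) you identify the $(R+1)s$ loop-vertices at $S+S$ and $S+m$, delete them, and apply~\eqref{htnew}. You additionally verify some details the paper leaves implicit, namely that each $P_y$ is an induced path rather than a triangle and that $(S+S)\cap(S+m)=\emptyset$ with both sets inside $[n/2+1,n]$; these checks are sound, though your aside that $m\notin S$ makes translation by $m$ injective is unnecessary since translation is always injective.
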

\begin{proof}
If $D\leq 0$ then (i) follows from (\ref{htnew}). So assume $D>0$.
Notice that there are $D$ vertex-disjoint $P_3$s in $L$: $\{n/2+i, n+i-D, n+i-D-m\}$ for each $1\le i\le D$. (These paths are vertex-disjoint since $D\leq 350 \eps n$ and $m\in [(1/4-175\eps)n, (1/4+175\eps)n]$.) The bound follows immediately from Lemma~\ref{lem-mis-p3}.

For (ii), notice that in $L$ we have  loops at all vertices in $S+S$ and $S+m$ (in total $(R+1)s$ vertices). 
${\rm{MIS}}(L)={\rm{MIS}}(L')$ where $L'$ is the graph obtained from $L$ by deleting all the vertices with loops.
The bound then follows from~\eqref{htnew}.
\end{proof}

The following lemma bounds the number of maximal sum-free sets $M$ satisfying~($\alpha$) and ($\beta$) and with $b$ sufficiently bounded away from $n/2$ from above.

\begin{lemma}\label{lem-int1}
There exists a constant $K=K(\ep)$ such that the number of maximal sum-free sets $M$ in $[n]$ that  satisfy~($\alpha$), ($\beta$) and $b\leq n/2-K$ is at most $\ep 2^{n/4}$.
\end{lemma}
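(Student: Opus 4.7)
My plan is to reduce the count to a sum over pairs $(m, S)$ of ${\rm MIS}(L)$ via Lemma~\ref{lem-mis}, and then bound this sum by combining Lemma~\ref{lem-mis-L} with the Green--Morris bound (Lemma~\ref{lem-G-M}). By Lemma~\ref{lem-mis}, every maximal sum-free $M$ satisfying the hypotheses corresponds to a triple $(m, S, I)$ where $m = \min M$, $S = (M \cap [n/2]) \setminus \{m\}$ with $b := \min S \leq n/2 - K$, and $I = M \cap (n/2,n]$ is a maximal independent set in $L = L_{\{m\} \cup S}[[n/2+1, n]]$. Writing $s = |S|$ and $D = n/2 - b \geq K$, the target quantity is at most $\sum_{(m,S)} {\rm MIS}(L)$, where $m$ ranges over an interval of length $350\eps n$ and $S$ is a nonempty subset of $[b, n/2]$ with $b \leq n/2 - K$ such that $\{m\} \cup S$ is sum-free.

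I would split this sum by $s$, and within the large-$s$ regime also by a dyadic decomposition of $R := |S+S|/s$. When $s \geq s_0(\eps)$ is sufficiently large, Lemma~\ref{lem-mis-L}(ii) gives ${\rm MIS}(L) \leq 2^{n/4 - (R+1)s/2}$; for each dyadic window $R \in [2^k, 2^{k+1})$, Lemma~\ref{lem-G-M} bounds the number of valid $S$ by $2^{\delta s}\binom{2^k s}{s}(350\eps n)^{2^{k+1}+\delta}$. The quantity $\binom{2^k s}{s}\cdot 2^{-(2^k+1)s/2}$ is at most $c_k^s$ with $c_k < 1$ for all $k \geq 1$, so the sum over $s \geq s_0$ is a convergent geometric series, and summing over $k$ yields a bound of the form $(\eps n)^{O(1)}\cdot 2^{n/4}$, which is absorbed into $\tfrac{\eps}{2}\, 2^{n/4}$ for $n$ large. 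When $s < s_0$, I would instead apply Lemma~\ref{lem-mis-L}(i), giving ${\rm MIS}(L) \leq 2^{n/4 - D/25}$; the number of such $S$ is at most $\binom{D}{s-1} \leq D^{s_0}$, and the tail $\sum_{D \geq K} D^{s_0}\, 2^{-D/25}$ is bounded by $C(s_0)\, K^{s_0}\, 2^{-K/25}$. Multiplying by the $350\eps n$ choices of $m$ and taking $K = K(\eps)$ large enough in terms of $s_0$ bounds this contribution by $\tfrac{\eps}{2}\, 2^{n/4}$.

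The main obstacle is tuning the two regimes to cooperate: Green--Morris introduces polynomial-in-$n$ factors $(350\eps n)^{R+\delta}$ that can only be absorbed by the geometric decay in $s$ coming from Lemma~\ref{lem-mis-L}(ii), which is why $s_0$ must be a sufficiently large constant depending on $\eps$. In the small-$s$ regime, and especially at $s = 1$ where Lemma~\ref{lem-mis-L}(ii) saves only a constant, the hypothesis $D \geq K$ is essential: the decay $2^{-D/25}$ from Lemma~\ref{lem-mis-L}(i) is the only tool available to beat both the polynomial count of $S$'s and the $\eps n$ factor from ranging over $m$, so $K$ must be taken large in terms of $s_0$ and $\eps$.
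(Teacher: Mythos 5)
Your high-level decomposition—split the count by $s=|S|$ and bound $\sum_{(m,S)}{\rm MIS}(L)$ via Lemma~\ref{lem-mis-L} and Green--Morris—is in the right spirit, but there are two genuine gaps that the paper's proof is specifically engineered to avoid and which your argument does not.

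First, the polynomial factors in $n$ cannot be absorbed. You apply Green--Morris with ambient domain $D_{GM}=350\eps n$, which produces the factor $(350\eps n)^{\lfloor R+\delta\rfloor}$. Even after the correct pairing of $\binom{\frac12 Rs}{s}$ with $2^{-(R+1)s/2}$ (your version uses mismatched endpoints of the dyadic window, so your claim that $c_k<1$ for all $k\ge 1$ is in fact false for $k=1,2$, e.g.\ $\binom{2s}{s}2^{-3s/2}\sim 2^{s/2}/\sqrt{\pi s}$), the best you get is $2^{n/4-(1/2-\delta)s}\cdot(350\eps n)^{R+1}$ per $(s,R)$-pair. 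Since $R$ ranges up to $\approx(s+1)/2$, the polynomial-in-$n$ factor is $(350\eps n)^{\Omega(s)}$, which \emph{dominates} $2^{-(1/2-\delta)s}$ for any fixed $n$; the sum does not converge geometrically in $s$, and the final bound is $\gg 2^{n/4}$, not $o(2^{n/4})$. You also multiply by $350\eps n$ choices of $m$ uniformly, which is a second unabsorbed factor of $n$. You write that $(\eps n)^{O(1)}\cdot 2^{n/4}$ is ``absorbed into $\tfrac{\eps}{2}2^{n/4}$ for $n$ large,'' but this is backwards: a polynomial in $n$ grows without bound and destroys the bound. In the small-$s$ regime the same issue appears: your bound $350\eps n\cdot C(s_0)K^{s_0}2^{-K/25}\cdot 2^{n/4}$ requires $K\gtrsim\log n$ to beat the explicit factor of $n$, contradicting that $K$ is a constant depending only on~$\eps$.

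The paper avoids both problems by first handling (via Claim~\ref{lem-t-D-s}) the regimes $t\ge 3D$ and $D\ge 10^9 s$, where ${\rm MIS}(L)$ decays exponentially in $t$ or in $D$ respectively; this simultaneously controls the number of choices of $m$ (which is $O(D)$ when $t<3D$, not $O(n)$) and forces $D\le 10^9 s$ in the residual case, so that the Green--Morris factor $D^{\lfloor R+\delta\rfloor}$ is bounded by a polynomial in $s$ rather than in $n$. The further dichotomy $R\ge 100$ (handled by the trivial $\binom{D}{s}\le\binom{10^9 s}{s}$ count with ${\rm MIS}(L)\le 2^{n/4-50s}$) versus $R\le 100$ (handled by Green--Morris, with $D^{\lfloor R+\delta\rfloor}\le 2^{400\log s}$) then makes the $s$-sum genuinely geometric. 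In short, the key missing idea in your proof is the partition by $D$ versus $s$: without restricting to $D=O(s)$ before invoking Green--Morris, the $D_{GM}^{\lfloor R+\delta\rfloor}$ factor is fatal, and without the $t\ge 3D$ preliminary you cannot control the sum over $m$.
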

\begin{proof}
Let $K$ be such that $\delta \ll 1/K \ll \ep$. 
Our first claim implies that there are not too many maximal sum-free subsets of $[n]$ with $t$ or $D$ `large'.
\begin{claim}\label{lem-t-D-s} There are at most $\ep 2^{n/4}/5$ maximal sum-free sets $M$ which satisfy~($\alpha$) and ($\beta$) and with
\begin{itemize}
\item[{\rm (a)}] $b\leq n/2-K$;
\item[{\rm(b)}] $t\ge 3D$ or $D\ge 10^9s$.
\end{itemize} 
\end{claim}
\begin{proof}
Fix any  $m,b$ such that $m\in [(1/4-175\eps)n, (1/4+175\eps)n]$ and $n/2-350\eps n \leq b \leq n/2-K$.  Define $t$ and $D$ as before. 
Let $S \subseteq [b,n/2]$ such that $b \in S$, $S \cup \{m\}$ is sum-free and set $s:=|S|\leq D$. Define the link graph $L$ as before. 

Suppose that $t\ge 3D$. If $m=n/4-t$ then for each $i$ with $D+1\le i \le 2t-D$ consider the subgraph  $H_i$ of $L$ induced by
$\{n/2+i, 3n/4+i-t, n+i-2t\}$. Ignoring loops, $H_i$ spans a $P_3$ component in $L$ and so ${\rm MIS} (H_i) \leq 2$.
Indeed, since $t,D\leq 350 \eps n$ and $\min(S)=b=n/2-D$,  the vertex $3n/4+i-t$ has no neighbour in $L$ generated by $S$. Also, since  $n/2+i+b=n+i-D>n$ and $n+i-2t-b=n/2+i-2t+D\le n/2$, neither $n/2+i$ nor $n+i-2t$ has a neighbour generated by $S$ in $L$. Recall $L$ and thus  $L':=L\setminus \cup_{i=D+1}^{2t-D} H_i$ is  triangle-free. Thus by~\eqref{htnew} we have
$${\rm{MIS}}(L)\le {\rm{MIS}}(L')\cdot \prod_i {\rm{MIS}}(H_i)\le 2^{[n/2-3(2t-2D)]/2}\cdot 2^{2t-2D}\le 2^{n/4-(t-D)}\le 2^{n/4-2t/3}.$$

Otherwise $m=n/4+t$ and then there are $2t$ isolated vertices $\{3n/4-t+1,\ldots, 3n/4+t\}$ in $L$. Then by~\eqref{htnew}, ${\rm{MIS}}(L)\le 2^{n/4-t}$.

Given fixed $t$, there are $2$ choices for $m$. There are at most $ 2^{t/3}$ choices for $S$ so that $D\leq t/3$. Further, fixing $S$ determines $b$ and $D$. 
Altogether, this implies that the number of maximal sum-free subsets $M$ of $[n]$ that satisfy ($\alpha$), ($\beta$), (a) and $t\ge 3D$ is at most
\begin{eqnarray}\label{eq-first}
2 \cdot \sum_{t\ge 3D\ge 3K}2^{t/3}\cdot 2^{n/4-2t/3}\le 2 \cdot \sum_{t\ge 3K}2^{n/4-t/3}\le\frac{\ep}{10}\cdot 2^{n/4},
\end{eqnarray}
where the last inequality follows since $1/K \ll \ep$ and $n$ is sufficiently large.

Suppose now that $t\le 3D$ and $D/s\ge 10^9$. For fixed $D \geq K$ there are $3D$ choices for $t$ and so at most $6D\le 2^{2\log D}$ choices for $m$. Given fixed $D$, there are $D=2^{\log D}$ choices for $s$.
For fixed $D,s$ there are ${D\choose s}\le \left(\frac{eD}{s}\right)^s\le 2^{s\log(eD/s)}$ choices for $S$.
 Note that when $D/s\ge 10^9$, $3\log D+s\log(eD/s)\le D/50$. 
Together, with  Lemma~\ref{lem-mis-L}(i), this implies that the number of maximal sum-free subsets $M$ of $[n]$ that satisfy ($\alpha$), ($\beta$), (a) and with $t\le 3D$ and $D/s\ge 10^9$ is at most
\begin{eqnarray}\label{eq-second}
\sum_{D\ge K} 2^{2\log D}\cdot 2^{\log D}\cdot 2^{s\log(eD/s)}\cdot 2^{n/4-D/25}\le \sum_{D\ge K} 2^{n/4-D/50}\le\frac{\ep}{10}\cdot 2^{n/4}.
\end{eqnarray}
\end{proof}

By Claim~\ref{lem-t-D-s}, to complete the proof of the lemma it suffices to count   the number of maximal sum-free subsets $M$ of $[n]$ that satisfy ($\alpha$), ($\beta$) and 

($\gamma_1$) $b\leq n/2-K$;

($\gamma_2$)  $s \geq D/10^9 \geq  K/10^9 $; 

($\gamma_3$)  $t <3D$.

Fix any  $m,b$ such that $m\in [(1/4-175\eps)n, (1/4+175\eps)n]$ and $n/2-350\eps n \leq b \leq n/2-K$.  
Let $S \subseteq [b,n/2]$ such that $b \in S$, $S \cup \{m\}$ is sum-free and set $s:=|S|\leq D$. Define the link graph $L$ as before. 

Choose $s$ and $D$ such that $s \geq D/10^9$. For each fixed $s$ there are at most $10^9s$ choices for $D$.
For a fixed $s \geq D/10^9$, there are at most 
$ 6D\le 10^{10}s\le 2^{2\log s}$ choices for $m$ so that $t <3D$ and at most 
$\binom{10^9s}{s}$ choices for $S$.
So there are at most
\begin{eqnarray}\label{eq-m-S}
 10^9s \cdot 2^{2\log s}\cdot {10^9s\choose s}\le 10^9s \cdot 2^{2\log s} \cdot 2^{s\log(e\cdot 10^9)}\le2^{49s}
\end{eqnarray}
choices for the pair $S,m$ given fixed $s$.
Let $R$ be defined so that $|S+S|=Rs$. We now distinguish two cases depending on the size of $S+S$. 

The number of maximal sum-free subsets $M$ in $[n]$ that satisfy ($\alpha$), ($\beta$), ($\gamma_1$)--($\gamma_3$)  and $R \geq 100$ is at most
\begin{eqnarray}\label{eq-third}
\sum_{s\ge K/10^9} 2^{49s}\cdot 2^{n/4-50s}\le\sum_{s\ge K/10^9} 2^{n/4-s}\le\frac{\ep}{10}\cdot 2^{n/4}.
\end{eqnarray}
(Here we have applied \eqref{eq-m-S} and Lemma~\ref{lem-mis-L} (ii).)

Let $s_0(1/9, 100)$ be the constant returned from Lemma~\ref{lem-G-M}. Since we chose $K$ sufficiently large, we have that  $s\ge K/10^9\ge s_0(1/9, 100)$.

Now suppose $R\leq 100$.
 Then by Lemma~\ref{lem-G-M} the number of choices for $S$ is at most
\begin{eqnarray}\label{eq-S}
 2^{ s/9}{\frac{1}{2}Rs\choose s}D^{\lfloor R+1/9\rfloor}\le 2^{ s/9}\cdot 2^{Rs/2}\cdot 2^{4R\log s}\le 2^{Rs/2+2 s/9}.
\end{eqnarray}
Recall that for a fixed $s$, the number of choices for $m$ is at most $ 2^{2\log s}$. Together with Lemma~\ref{lem-mis-L}(ii) and~\eqref{eq-S}, we have that the number of maximal sum-free subsets $M$ in $[n]$ that satisfy ($\alpha$), ($\beta$), ($\gamma_1$)--($\gamma_3$)  and $R \leq 100$ is at most
\begin{eqnarray}\label{eq-fourth}
& &\sum_{s\ge K/10^9} 2^{2\log s}\cdot 2^{Rs/2+2 s/9}\cdot 2^{n/4-(R+1)s/2}\le \sum_{s\ge K/10^9} 2^{n/4-s/2+ s/3}\nonumber\\
&\le& \sum_{s\ge K/10^9} 2^{n/4-s/6}\le\frac{\ep}{10}\cdot 2^{n/4}.
\end{eqnarray}
Thus by~Claim~\ref{lem-t-D-s},~\eqref{eq-third} and~\eqref{eq-fourth}, we have that the number of maximal sum-free sets that satisfy ($\alpha$), ($\beta$) and $b \leq n/2-K$ is at most $\ep\cdot 2^{n/4}$.
\end{proof}

The following lemma bounds the number of maximal sum-free sets when $t$ is large.

\begin{lemma}\label{lem-tlarge}
There are at most $\ep 2^{n/4}$ maximal sum-free sets in $[n]$ that satisfy ($\alpha$) and ($\beta$) and 
with $|m-n/4|=t$ and $b=n/2-D$ such that $D\le K$ and $t\ge 50K$.
\end{lemma}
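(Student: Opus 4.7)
The plan is to mimic the $t \geq 3D$ branch of the proof of Claim~\ref{lem-t-D-s}. The hypothesis $D \leq K$ together with $t \geq 50K$ immediately gives $t \geq 50D \geq 3D$, so the link graph $L = L(n,m,S)$ admits exactly the same dissection as there: if $m = n/4 - t$ then $L$ contains $2t - 2D$ vertex-disjoint $P_3$-components on $\{n/2+i,\, 3n/4+i-t,\, n+i-2t\}$ for $i \in [D+1, 2t-D]$, whereas if $m = n/4 + t$ then $L$ has $2t$ (essentially) isolated vertices in $[3n/4-t+1, 3n/4+t]$. Applying~\eqref{htnew} to the remaining triangle-free part then yields
\[
\mathrm{MIS}(L) \leq 2^{n/4 - (t-D)} \leq 2^{n/4 - t/2},
\]
where the second inequality uses $D \leq K \leq t/50$.

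Next I would count. For fixed $t$ and $D$, there are $2$ choices for $m$ (one of $n/4 \pm t$), and since $|[b+1, n/2]| = D$, there are at most $2^D \leq 2^K$ choices for $S \subseteq [b, n/2]$ with $b \in S$; both $S$ and $S \cup \{m\}$ are automatically sum-free because any two elements of $S$ sum to more than $n/2$ while $m < n/2$. By Lemma~\ref{lem-mis}, each triple $(m,b,S)$ completes to at most $\mathrm{MIS}(L)$ maximal sum-free subsets of $[n]$, so the total count is bounded by
\[
\sum_{D=0}^{K}\ \sum_{t \geq 50K} 2 \cdot 2^K \cdot 2^{n/4 - t/2} \leq 8(K+1) \cdot 2^{-24K} \cdot 2^{n/4} \leq \ep \cdot 2^{n/4},
\]
where the last inequality uses $1/K \ll \ep$ to absorb the $(K+1)\cdot 2^{-24K}$ factor.

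Since this argument is essentially a recycling of the $t \geq 3D$ case of Claim~\ref{lem-t-D-s} specialized to the regime where $D$ is uniformly bounded, I do not anticipate any real obstacle; the only subtle point is to verify that the $P_3$/isolated-middle-block dissection of $L$ depends only on $m$ and $b$, so that the bound on $\mathrm{MIS}(L)$ applies uniformly over all allowed $S$. This is clear because any additional $S$-edges or $S$-loops can only further decrease $\mathrm{MIS}(L)$ beyond the dissection bound.
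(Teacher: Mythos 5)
Your proof is correct and, at the level of the overall strategy --- split into $m=n/4\pm t$, extract the structural features of the link graph $L(n,m,S)$, bound $\mathrm{MIS}(L)$, and sum --- is essentially the same as the paper's. The only substantive difference is in the $m=n/4-t$ subcase: the paper takes the $2t$ vertex-disjoint $P_3$s $\{n/2+i,\,3n/4-t+i,\,n-2t+i\}$, $1\le i\le 2t$, and applies Lemma~\ref{lem-mis-p3} directly, which requires only that these be vertex-disjoint $P_3$s \emph{as subgraphs} and gives $\mathrm{MIS}(L)\le 2^{n/4-2t/25}$. You instead port the dissection from Claim~\ref{lem-t-D-s}, removing $P_3$ \emph{components} $H_i$ and applying~\eqref{htnew} to what remains, which gives the sharper bound $2^{n/4-(t-D)}$ but requires re-verifying (as in that claim, via $\min(S)=b=n/2-D$) that the $H_i$s really are components. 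Both routes comfortably give $\le 2^{n/4-t/2}$, so this is a matter of taste; the paper's choice avoids one verification step.

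One detail you should make explicit: in this lemma $D=n/2-b$ can be zero or negative (i.e.\ $b\ge n/2$, in which case $S=\emptyset$). Your chain $t\ge 50K\ge 50D\ge 3D$ and your range $i\in[D+1,2t-D]$ implicitly take $D\ge 0$, and your outer sum $\sum_{D=0}^{K}$ omits the $D<0$ case. None of this is a real obstacle --- when $D\le 0$ there is exactly one choice of $S$, the full set of $2t$ $P_3$s is available, and $\mathrm{MIS}(L)\le 2^{n/4-t}\le 2^{n/4-t/2}$ still --- so you just need one extra sentence to cover it (the paper handles this by explicitly setting $S=\emptyset$ when $b>n/2$ and then not relying on the $[D+1,2t-D]$ index range). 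Likewise your remark that $S\cup\{m\}$ is ``automatically sum-free'' isn't quite right (e.g.\ $2m$ could lie in $[b,n/2]$), but this only over-counts and does not affect the upper bound.
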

\begin{proof}
Let us first assume that $m=n/4+t$. 
If $b \leq n/2$ then let $S \subseteq [b,n/2]$ where $b\in S$. Otherwise let $S=\emptyset$.
Then  in the link graph $L:=L(n,m,S)$, every vertex in $\{3n/4-t+1,3n/4+t\}=:N$ is either isolated or adjacent only to itself. Since $D\le K$, the number of choices for $S$ is at most $2^{K}$. Let $L':=L\setminus N$, then by~\eqref{htnew} the number of maximal sum-free sets in this case is at most
\begin{eqnarray*}
\sum_{t\ge 50K}2^{K}\cdot \mis(L')\le \sum_{t\ge 50K}2^K\cdot 2^{n/4-t}\le \ep 2^{n/4}/2.
\end{eqnarray*}
Otherwise, suppose  $m=n/4-t$. If $b \leq n/2$ then let $S \subseteq [b,n/2]$ where $b\in S$. Otherwise let $S=\emptyset$.
 The link graph $L:=L(n,m,S)$ contains $2t$ vertex-disjoint $P_3$s on the vertex set $\{n/2+i, 3n/4-t+i, n-2t+i\}$ where $1\le i\le 2t$. Then by Lemma~\ref{lem-mis-p3}, the number of maximal sum-free sets in this case is at most
\begin{eqnarray*}
\sum_{t\ge 50K}2^{K}\cdot \mis(L)\le \sum_{t\ge 50K}2^K\cdot 2^{n/4-2t/25}\le \ep 2^{n/4}/2.
\end{eqnarray*}
\end{proof}

By Lemmas~\ref{lem-int1} and~\ref{lem-tlarge}, we now need only focus on maximal sum-free sets with 
\begin{eqnarray}\label{eq-m-S1}
t,D\le 50K, \quad i.e. \quad S\subseteq [n/2-50K, n/2] \quad\mbox{  and } \quad m\in[n/4-50K, n/4+50K],
\end{eqnarray} 
where here $D$ may be negative and $S=\emptyset$.
Given any $m,S$ satisfying~\eqref{eq-m-S1} so that $2m \not \in S$, define $C(n,m,S):=\frac{|{\rm{MIS}}(L(n,m,S))|}{2^{n/4}}$. Notice that not every maximal independent set in $L(n,m,S)$ necessarily gives a maximal sum-free set in $[n]$. This happens exactly when a set $I$ is a maximal independent set in both $L(n,m,S)$ and $L(n,m,S^*)$ for some sum-free $S^*\supset S$ such that $S^* \subseteq [n/2]\setminus \{m,2m\}$. Let $\cI(n,m,S)$ be the set of all maximal independent sets in $L(n,m,S)$ that do not correspond to maximal sum-free sets in $[n]$. For each $I\in\cI(n,m,S)$, define $S^*(I)$ to be a largest sum-free set such that $S \subseteq S^*(I)\subseteq [n/2]\setminus \{m,2m\}$ and $I$ is also a maximal independent set in $L(n,m,S^*(I))$. Further partition $\cI(n,m,S):=\cI_1(n,m,S)\cup \cI_2(n,m,S)$, in which $\cI_1(n,m,S)$ consists of all those $I\in \cI(n,m,S)$ with $S^*(I)\subseteq [n/2-50K,n/2]$. Let $\msf(n,m,S)$ be the number of maximal sum-free sets $M$ in $[n]$ that satisfy ($\alpha$) and ($\beta$) with $\min(M)=m$ and $(M\cap[n/2])\setminus\{m\}=S$. For $i=1,2$, further define $C_i(n,m,S):=\frac{|\cI_i(n,m,S)|}{2^{n/4}}$. Then clearly by the definition we have 
$$\msf(n,m,S)=[C(n,m,S)-C_1(n,m,S)-C_2(n,m,S)]2^{n/4}.$$
Notice that every set $I\in \cI_2(n,m,S)$ is a maximal independent set in $L(n,m,S^*(I))$ with $\min(S^*(I))\le n/2-50K$, it then follows from Lemma~\ref{lem-int1} that $\sum_{m,S:~t,D\le 50K} C_2(n,m,S)\le \ep$. 

Thus, the number of maximal sum-free sets $M$ in $[n]$ that satisfy ($\alpha$) and ($\beta$) is at least 
\begin{eqnarray*}
\sum_{m,S:~t,D\le 50K}\msf(n,m,S)
&=&\sum_{m,S:~t,D\le 50K}[C(n,m,S)-C_1(n,m,S)-C_2(n,m,S)]2^{n/4}\\
&\ge&\sum_{m,S:~t,D\le 50K}[C(n,m,S)-C_1(n,m,S)]2^{n/4}-\ep 2^{n/4}.
\end{eqnarray*}
On the other hand, by Lemmas~\ref{lem-int1} and~\ref{lem-tlarge}, the number of maximal sum-free sets $M$ in $[n]$ that satisfy ($\alpha$) and ($\beta$) is at most
\begin{eqnarray*}
\sum_{m,S}\msf(n,m,S)&=&\sum_{m,S:~t,D\le 50K}\msf(n,m,S)+\sum_{m,S:~\max\{t,D\}>50K}\msf(n,m,S)\\
&\le&\sum_{m,S:~t,D\le 50K}[C(n,m,S)-C_1(n,m,S)]2^{n/4}+2\ep 2^{n/4}.
\end{eqnarray*}
By defining $C(n):=\sum_{m,S:~t,D\le 50K}[C(n,m,S)-C_1(n,m,S)]$, together with Lemmas~\ref{lem-CE},~\ref{lem-CE*} and~\ref{noovercount}, we have that the number of maximal sum-free sets of $[n]$ contained in type (b) containers  is $(C(n)\pm 4\ep)2^{n/4}$.

We now proceed to prove that for any $n'\equiv n \mod 4$, $C(n')=C(n)$. We need the following lemma, which roughly states that for any ``fixed'' choice of $m$ and $S$, the link graphs on $[n/2+1, n]$ and $[n'/2+1, n']$ differ by a component consisting of an induced matching of size $(n'-n)/4$. To be formal, fix $t\in [-50K, 50K]$, $S_0\subseteq [50K]$ and $\ell \in \mathbb N$. Define
\begin{eqnarray}\label{eq-iso}
n':=n+4\ell, \quad m:=n/4-t,\quad m':=n'/4-t,\quad S:=n/2-S_0,\quad S':=n'/2-S_0.
\end{eqnarray}
The proof of the following lemma for the case $m=n/4+t$ and $m'=n'/4+t$ is almost identical except only simpler, we omit it here.
\begin{lemma}\label{lem-iso}
Let $n', m, m', S, S'$ be given as in~\eqref{eq-iso}. Then $L(n', m', S')$ is isomorphic to the disjoint union of $L(n,m,S)$ and a matching of size $\ell$.
\end{lemma}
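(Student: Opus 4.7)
The approach is to construct an explicit isomorphism from $L(n,m,S)$ together with a disjoint matching of size $\ell$ onto $L(n',m',S')$. The structural basis is that both vertex sets decompose into $m$-shift (resp.\ $m'$-shift) orbits of length $2$ or $3$, since $3m > n/2$ for large $n$. Each $v \in V(L(n,m,S))$ can be uniquely written as $v = v_0(v) + k(v)m$ with orbit start $v_0(v) \in [n/2+1, 3n/4-t]$ and level $k(v) \in \{0,1,2\}$; the orbit has length $3$ iff $v_0 \in [n/2+1, n/2+2t]$. A direct count shows that both graphs have exactly $2t$ length-$3$ orbits, while $L(n',m',S')$ has exactly $\ell$ more length-$2$ orbits than $L(n,m,S)$.

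Set $\Delta := \max(2t, D)$ with $D := n/2 - \min S$ (or $D := 0$ if $S = \emptyset$), and define
\[
\phi(v) := v + (c(v) + k(v))\,\ell, \qquad c(v) := \begin{cases} 2 & \text{if } v_0(v) \leq n/2 + \Delta,\\ 3 & \text{if } v_0(v) > n/2 + \Delta. \end{cases}
\]
The claim is that $\phi$ injects $V(L(n,m,S))$ into $V(L(n',m',S'))$, preserves all edges and loops, and its image complement $U$ consists of $2\ell$ vertices forming $\ell$ isolated $m'$-edges.

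I would verify the three edge/loop types separately. For an $m$-edge $(v, v+m)$: both endpoints lie in the same orbit, so $c$ is unchanged while $k$ increases by $1$, giving image difference $m+\ell = m'$. For an $S$-edge $(v, v+s)$ with $s = n/2 - s_0$: the small endpoint $v$ lies at level $0$ with $v_0(v) = v \leq n/2+s_0 \leq n/2+\Delta$, so its shift is $2\ell$. The large endpoint $v+s$ is either at level $2$ with orbit start $v+(2t-s_0) \in [n/2+1, n/2+\Delta]$ (giving shift $(2+2)\ell = 4\ell$), or, only when $s_0 > 2t$ and $v \leq n/2+s_0-2t$, at level $1$ with orbit start $v + n/4 + t - s_0 > n/2+\Delta$ (giving shift $(3+1)\ell = 4\ell$); either way $\phi(v+s) - \phi(v) = s + 2\ell = s' \in S'$. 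Loops at $2m$, $m+s$, and $s_1+s_2$ are checked by the same bookkeeping, with shifts $2\ell, 3\ell, 4\ell$ matching $2m'-2m$, $(m'+s')-(m+s)$, and $(s_1'+s_2')-(s_1+s_2)$, respectively.

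Finally, $U$ consists of the $\ell$ orbit starts in $[n'/2+\Delta+2\ell+1, n'/2+\Delta+3\ell]$ together with their $m'$-translates. Since these vertices lie strictly above the small-endpoint region $[n'/2+1, n'/2+D]$ of $S'$-edges and strictly below the large-endpoint region $[n'-D+1, n']$, and are disjoint from all loop locations (which lie near $n'/4$ and near $n'/2$), $U$ induces a matching of size $\ell$ with no edges to the image of $\phi$, completing the isomorphism $L(n',m',S') \cong L(n,m,S) \sqcup M_\ell$. The main obstacle is the $S$-edge verification: the level of $v+s$ depends on the relative sizes of $s_0$ and $2t$, and the subcase $s_0 > 2t$ requires one to check the arithmetic inequality $v+n/4+t-s_0 > n/2+\Delta$ (which holds for $n$ large relative to $K$) so that the level-$1$ endpoint's orbit start falls in the $c=3$ region, producing the required total shift of $4\ell$.
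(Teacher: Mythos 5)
Your map $\phi(v) = v + \bigl(c(v)+k(v)\bigr)\ell$ is, once unwound, exactly the paper's piecewise-affine $f$ (shift by $2\ell$, $3\ell$, $4\ell$ on three consecutive intervals); the orbit decomposition is a clean way to \emph{derive} those shifts and to turn the edge/loop preservation check --- which the paper dismisses as ``straightforward but tedious'' --- into near-mechanical bookkeeping. So this is essentially the paper's route with added motivation, and for $t \geq 0$ (i.e.\ $m \leq n/4$) it is correct.

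One caveat worth flagging: your structural claim (orbits of length $2$ or $3$ only) and your threshold $\Delta := \max(2t, D)$ both implicitly assume $t \geq 0$. When $m > n/4$ there are orbits of length $1$, and, more importantly, $2m = n/2 + 2|t|$ really is in the vertex set and carries a loop, but it is a level-$0$ vertex with orbit start $2m$ and $2m - n/2 = 2|t|$, which exceeds your $\Delta = D$ whenever $2|t| > D$. Then $c(2m)=3$ and $\phi(2m) = 2m + 3\ell \neq 2m' = 2m + 2\ell$, so the loop is \emph{not} preserved. You should take $\Delta := \max(2|t|, D)$, or a fixed absolute threshold like the paper's $200K \geq 2|t|$, to make the $2m$ bookkeeping you list go through. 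The paper deflects this by relegating $m > n/4$ to a ``simpler, omitted'' case, so this does not undermine your argument on the case the paper actually proves; but since you explicitly include the loop at $2m$ among what must be checked, the threshold as written would fail precisely there. (Minor coordinate slip at the end: the missing orbit starts are $[n'/2 + \Delta + 1,\, n'/2 + \Delta + \ell]$ in $n'$-coordinates, not $[n'/2 + \Delta + 2\ell + 1,\, n'/2 + \Delta + 3\ell]$.)
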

\begin{proof}
Let $I_1:=[n'/2+200K+1, 3n'/4-200K+t]$ and $I_2:=[3n'/4+200K+1-t, n'-200K]$. Notice first that the induced subgraph of $L':=L(n',m',S')$ on $I_1\cup I_2$ is a matching: $\{n'/2+200K+1, 3n'/4+200K+1-t\}, \ldots, \{3n'/4-200K+t, n'-200K\}$. Let $\cM$ be the first $\ell$ matching edges in $L'[I_1\cup I_2]$, i.e.~$\{n'/2+200K+1, 3n'/4+200K+1-t\}, \ldots, \{n'/2+200K+\ell, 3n'/4+200K+\ell-t\}$. Define $L'':=L'\setminus\cM$. It is a straightforward but tedious task to see that $L''$ is isomorphic to $L:=L(n,m,S)$. We give here only the mapping $f:V(L)\rightarrow V(L'')$ that defines an isomorphism:
\begin{itemize}
\item $[n/2+1, n/2+200K]\quad \rightarrow\quad [n'/2+1, n'/2+200K]$;
\item $[n/2+200K+1, 3n/4+200K-t]\quad \rightarrow\quad [n'/2+200K+\ell+1, 3n'/4+200K-t]$;
\item $[3n/4+200K-t+1, n-200K]\quad \rightarrow\quad [3n'/4+200K+\ell-t+1, n'-200K]$;
\item $[n-200K+1, n]\quad \rightarrow\quad [n'-200K+1, n']$.
\end{itemize}
\end{proof}

Fix $n',m,m',S, S'$ satisfying~\eqref{eq-m-S1} and~\eqref{eq-iso}. By the definition of $C(n)$, to show that $C(n)=C(n')$, it suffices to show that $C(n,m,S)=C(n',m',S')$ and $C_1(n,m,S)=C_1(n,m,S)$. Let $\cM$ and $f$ be the matching of size $\ell$ and the mapping from Lemma~\ref{lem-iso}. As an immediate consequence of Lemma~\ref{lem-iso}, we have
$$C(n',m',S')=\frac{|{\rm{MIS}}(L(n',m',S'))|}{2^{n'/4}}=\frac{|{\rm{MIS}}(L(n,m,S))|\cdot \mis(\cM)}{2^{n/4}\cdot 2^{\ell}}=C(n,m,S).$$
As for $C_1(n,m,S)$, it suffices to show that every $I\in\cI_1(n,m,S)$ corresponds to precisely $2^{\ell}$ sets in $\cI_1(n',m',S')$. Fix an arbitrary $I\in\cI_1(n,m,S)$ and recall that $S\subseteq S^*(I)\subseteq [n/2-50K,n/2]$. Let $S^{**}$ be the ``counterpart'' (as in $S'$ to $S$ in~\eqref{eq-iso}) of $S^*(I)$ in $[n']$, i.e.~$S^{**}:=n'/2-(n/2-S^*(I))\subseteq [n'/2-50K, n'/2]$. By the definition of $\cM$, edges generated by  $S', S^{**}\subseteq [n'/2-50K,n'/2]$ on $[n'/2, n']$ are not incident to any vertex in $\cM$. Hence by adding any maximal independent set of $\cM$ to $f(I)$, we obtain $|\mis(\cM)|=2^{\ell}$ many maximal independent sets $I'$ in $\cI_1(n',m',S')$ with $S^*(I')=S^{**}$ as required. We have concluded the following main result of this subsection.

\begin{lemma}\label{lem-intmain} For each $1\leq i \leq 4$, there is a constant $D_i$ such that,  if $n\equiv i \mod 4$ then
the number of maximal sum-free subsets of $[n]$ in type (b) containers is $(D_i\pm 4\eps ) 2^{n/4}$.
\end{lemma}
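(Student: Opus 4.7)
The plan is to assemble the preceding subsection's lemmas into a single count and isolate a quantity that manifestly depends only on $n \bmod 4$. First, I would dispose of the ``bad'' type~(b) containers. By Lemma~\ref{lem-CE}, any type~(b) container $F$ with $|F\cap[n/2]|\geq \ep n$ houses at most $2^{(1/4-\de)n}$ maximal sum-free subsets of $[n]$; multiplying by $|\cF|\leq 2^{\al n}$ with $\al\ll\de$, their joint contribution is exponentially smaller than $2^{n/4}$. So I may restrict to type~(b) containers with $|F\cap[n/2]|\leq \ep n$. Within such $F$, Lemma~\ref{lem-CE*} shows that maximal sum-free subsets violating either ($\al$) or ($\be$) contribute only $o(2^{n/4})$ in total, and Lemma~\ref{noovercount} rules out the possibility that such an $M$ is also charged to a type~(a) or type~(c) container. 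Up to an additive $O(\ep)\cdot 2^{n/4}$ error, the problem therefore reduces to counting maximal sum-free subsets $M$ of $[n]$ that satisfy ($\al$) and ($\be$).

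Next I would cut down to the bounded regime. Lemma~\ref{lem-int1} handles every $M$ with $b\leq n/2-K$, and Lemma~\ref{lem-tlarge} handles every $M$ with $D\leq K$ but $t\geq 50K$, each group contributing at most $\ep\cdot 2^{n/4}$. What remains is the window $t,D\leq 50K$, i.e.~$m\in[n/4-50K,\,n/4+50K]$ and $S:=(M\cap[n/2])\setminus\{m\}\subseteq[n/2-50K,n/2]$. For each admissible pair $(m,S)$ I would normalise by $2^{n/4}$, setting $C(n,m,S):=|{\rm MIS}(L(n,m,S))|/2^{n/4}$, and defining $C_1(n,m,S)$ (resp.\ $C_2$) to account for the ``spurious'' maximal independent sets of $L(n,m,S)$ whose witnessing enlargement $S^*(I)$ lives inside (resp.\ outside) $[n/2-50K,n/2]$. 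Lemma~\ref{lem-int1}, applied to these witnesses, yields $\sum C_2(n,m,S)\leq \ep$, so defining
\[
C(n):=\sum_{m,S:~t,D\leq 50K}\bigl[C(n,m,S)-C_1(n,m,S)\bigr],
\]
the type~(b) count equals $(C(n)\pm 4\ep)\cdot 2^{n/4}$.

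The main step, and what I expect to be the principal obstacle, is proving the invariance $C(n)=C(n')$ whenever $n\equiv n'\pmod{4}$. I would pair each parameter $(m,S)$ for $[n]$ with its shift $(m',S')$ for $[n']=[n+4\ell]$ as in~\eqref{eq-iso}. Lemma~\ref{lem-iso} then supplies an explicit isomorphism showing that $L(n',m',S')$ is the disjoint union of $L(n,m,S)$ with an induced matching $\cM$ on $\ell$ edges; since an $\ell$-edge matching has $2^{\ell}$ maximal independent sets, the normalisation by $2^{n/4}$ vs.~$2^{n'/4}=2^{\ell}\cdot 2^{n/4}$ absorbs the extra factor and gives $C(n',m',S')=C(n,m,S)$. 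For the correction $C_1$ I would build a $2^{\ell}$-to-$1$ correspondence $\cI_1(n',m',S')\to\cI_1(n,m,S)$: given $I\in\cI_1(n,m,S)$ with witness $S^*(I)\subseteq[n/2-50K,n/2]$, translate to $S^{**}\subseteq[n'/2-50K,n'/2]$; since elements of $[n'/2-50K,n'/2]$ generate no edges incident to the vertices of $\cM$, each of the $2^{\ell}$ maximal independent sets of $\cM$ combines with the image of $I$ to give a distinct member of $\cI_1(n',m',S')$ sharing the witness $S^{**}$. The delicate point is verifying that $\cM$ really is isolated from every edge generated by a translated witness, and that every element of $\cI_1(n',m',S')$ arises from this construction; both facts hinge on the uniform support bound $t,D\leq 50K$ imposed earlier.

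Combining, the count of maximal sum-free subsets of $[n]$ in type~(b) containers is $(C(n)\pm 4\ep)\cdot 2^{n/4}$ with $C(n)$ depending only on $n\bmod 4$. Setting $D_i:=C(n_i)$ for any fixed $n_i\equiv i\pmod{4}$ then delivers the lemma.
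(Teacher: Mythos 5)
Your proposal reproduces the paper's argument essentially verbatim: you dispose of bad type~(b) containers via Lemma~\ref{lem-CE}, cut the remaining count to maximal sum-free sets satisfying ($\alpha$) and ($\beta$) via Lemmas~\ref{lem-CE*} and~\ref{noovercount}, restrict to $t,D\le 50K$ via Lemmas~\ref{lem-int1} and~\ref{lem-tlarge}, introduce the same normalised quantities $C(n,m,S)$, $C_1$, $C_2$ and $C(n)$, and then derive the mod-4 invariance from Lemma~\ref{lem-iso} by the same $2^\ell$-to-$1$ matching correspondence. The approach and all key steps match the paper's own proof.
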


We remark that the constants $D_i$ can be efficiently computed. Indeed, from the above argument, we get that $D_i=C(n_0)$ for sufficiently large $n_0$ with $n_0\equiv i$ mod 4. Note that $C(n_0)$ is determined by $O(1)$ many link graphs (the number of such graphs is at most the number of choices for $(m,S)$, which is at most $100K\cdot 2^{50K}$ due to~\eqref{eq-m-S1}). Fix one such graph, say $H_S$, notice crucially that $H_S$ is the disjoint union of some constant-order ($O_K(1)$ vertices) graph $F_S$ and a matching $M$ of size $|M|=n/4+O_K(1)$. Then by definition, $C(n_0)$ is determined solely by $\{F_S\}_{S\subseteq [n/2-50K,n/2]}$. 
We explain the consequences of this regarding computing the constants $C_i$ in Section~\ref{explain}.

%%%%%%%%%%%%%%%%%%%%%%%%%%%%%%

\subsection{Type (c) containers}\label{subsec-odd}
The next result implies that the number of maximal sum-free subsets of $[n]$ that contain at least two even numbers and that lie in type (c) containers is `small'.

\begin{lemma}\label{lem-odd} 
Let $F\in \cF$ be  a container of type (c). Then $F$ contains at most $2^{(1/4-\ep /2) n}$ maximal sum-free subsets of $[n]$ that contain at least two even numbers.
\end{lemma}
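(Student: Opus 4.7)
Since $B \subseteq O$ contains no even numbers, every even element of a maximal sum-free $M \subseteq F$ lies in $A$; writing $M = S \cup I$ with $S := M \cap A$ and $I := M \cap B$, the hypothesis $|M \cap E| \geq 2$ becomes $|S \cap E| \geq 2$. By Lemma~\ref{lem-mis}, $I$ is a maximal independent set of $L := L_S[B]$, and there are at most $2^{|A|} \leq 2^{\alpha n}$ sum-free choices for $S$. Hence, as $\alpha \ll \ep$, it suffices to show that $\mis(L_S[B]) \leq 2^{(1/4 - \ep)n}$ uniformly over such $S$.

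Fix such an $S$ and let $e_1 < e_2$ be its two smallest even elements. Two structural observations drive the argument. First, since $x + y$ is even whenever $x, y \in B \subseteq O$, every edge of $L$ arises from an even element of $S$: either a \emph{translation edge} $\{x, x + e\}$ or a \emph{sum edge} $\{x, e - x\}$ for some $e \in S \cap E$. Second, sum-freeness of $S$ rules out any triangle of $L$ whose three edges are all of translation type (this would yield a Schur triple inside $S \cap E$); a short case analysis on the remaining triangle types shows that each triangle of $L$ is determined by at most three elements of $S \cap E$. Consequently $L$ contains at most $O(|S \cap E|^3)$ triangles with each vertex lying in at most $O(|S \cap E|^2)$ of them, and a fractional hitting-set argument yields $T \subseteq V(L)$ of size $|T| = O(\alpha n)$ such that $L \setminus T$ is triangle-free (loops only decrease MIS counts, by Fact~\ref{newfact}).

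To apply Lemma~\ref{lem-mis-p3} I produce $\geq 30 \ep n$ vertex-disjoint copies of $P_3$ in $L$, splitting on the size of $e_1$. If $e_1 \leq (1/2 - \ep)n$, the translation edges from $e_1$ decompose a dense portion of $B$ into chains $x, x + e_1, x + 2e_1, \ldots$, of which enough have length at least $3$ to yield $\Omega(n)$ vertex-disjoint $P_3$s by collecting $\lfloor \ell/3 \rfloor$ of them from each path of length $\ell$. Otherwise $e_1 > (1/2 - \ep)n$ and both $e_1, e_2$ are close to or exceed $n/2$; for each odd $x \in B$ with $x < e_1/2$ and $e_1 - x, e_2 - x \in B$ (of which there are $\Omega(n)$ by density of $B$ in $O$), the two sum edges $\{x, e_1 - x\}$ and $\{x, e_2 - x\}$ form a $P_3$ centered at $x$, and a greedy spacing argument extracts $\Omega(n)$ vertex-disjoint copies (handling the few arithmetic collisions controlled by $e_2 - e_1$). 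Since $|T| \ll \ep n$, at least $30 \ep n$ of these $P_3$s survive in $L \setminus T$.

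Combining the standard inequality $\mis(L) \leq 2^{|T|} \cdot \mis(L \setminus T)$ (which follows from induced-subgraph monotonicity of MIS counts applied to each $I \cap T \subseteq T$) with Lemma~\ref{lem-mis-p3} applied to $L \setminus T$ gives
\[
\mis(L) \leq 2^{|T|} \cdot 2^{(|B| - |T|)/2 - 30 \ep n / 25} \leq 2^{|B|/2 + |T|/2 - (6/5)\ep n} \leq 2^{n/4 + O(\alpha n) - (6/5)\ep n} \leq 2^{(1/4 - \ep)n},
\]
and multiplying by the $2^{\alpha n}$ choices for $S$ yields the required $2^{(1/4 - \ep/2)n}$. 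The main obstacle is the triangle-handling step, since bounding the vertex cover of the triangle hypergraph by $O(\alpha n)$ when $|S \cap E|$ is close to $\alpha n$ requires a careful fractional/LP argument rather than naive greedy deletion; additionally, the large-$e_1$ sub-case of the $P_3$ hunt needs a delicate greedy spacing argument to keep the sum-edge $P_3$s vertex-disjoint despite arithmetic collisions governed by $e_2 - e_1$.
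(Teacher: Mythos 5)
Your approach diverges substantively from the paper's: where you run a single $P_3$-based argument (via Lemma~\ref{lem-mis-p3}) after a triangle-hitting-set cleanup, the paper splits on the size of the sum-free seed $S \subseteq F \cap E$. For $|S| \geq 2K^2$ (with $K$ a large constant) it shows $L_S[O]$ is \emph{almost regular} with $\delta \geq |S|/2$, $\Delta \leq 2|S|+2$, and appeals to the refined Moon--Moser bound of Lemma~\ref{lem-ik}; only for the bounded case $2 \leq |S| \leq 2K^2$ does it delete a small triangle-hitting set $T$ (of genuinely constant size, $\leq 24|S|^3 = O(1)$) and then count edges rather than $P_3$s, invoking Corollary~\ref{use}. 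This case split is not cosmetic, and your one-case plan has two genuine gaps.

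First, the claimed hitting-set bound $|T| = O(\alpha n)$ is unsupported precisely in the regime where it matters. You can indeed have $|S \cap E| = \Theta(\alpha n)$, and your own triangle count $O(|S \cap E|^3)$ then gives $\Theta(\alpha^3 n^3)$ triangles, which exceeds $n$ by two orders of magnitude. Knowing each vertex lies in $O(|S \cap E|^2)$ triangles does not yield an $O(\alpha n)$ hitting set: the LP lower bound on the fractional cover is $\geq (\#\text{triangles})/\Delta_{\text{hyp}}$, not an upper bound on the integral one, and I see no greedy or probabilistic argument that gets below $\Theta(\#\text{triangles})$ without additional structure. The paper simply never faces this problem because when $|S|$ is large it switches to Lemma~\ref{lem-ik}, which exploits near-regularity and bypasses triangle-freeness entirely; and when $|S|$ is small, $24|S|^3$ is constant, so $2^{|T|}$ is negligible.

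Second, the $P_3$-finding is fragile in the intermediate-$e_1$ regime. If $e_1$ is close to $(1/2 - \ep)n$, the translation chains in a residue class mod $e_1$ have length only $2$ or $3$, with the number of length-$3$ chains roughly $(n - 2e_1)/2 \approx \ep n$; after discounting the up to $(0.03 + \alpha)n$ odd numbers missing from $B$, you cannot guarantee $30\ep n$ (or even $\Omega(n)$) vertex-disjoint $P_3$s, and with $\ep \ll 1$ this can drop to zero. The paper avoids this by not hunting for $P_3$s at all in the type-(c) analysis: in Case~2 it counts edges directly (each $s \in S$ contributes $\geq (|O|-1)/2$ edges with at most one collision per pair), giving $e(G_1) \geq |O| - 2 \geq 3|O|/4$ and hence a surplus $k = \Theta(n)$ in Corollary~\ref{use}; the $P_3$-extraction is then hidden inside the proof of Lemma~\ref{dense} where it is done generically. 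You would be better served either adopting the paper's $|S|$-based dichotomy, or proving a version of your $P_3$ claim that handles all $e_1$ (e.g.\ by mixing translation and sum edges) and then still addressing the large-$|S|$ hitting-set issue separately.
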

\proof
Let $F\in\cF$ be as in the statement of the lemma. Let $K$ be a sufficiently large constant so that 
\begin{align}\label{choosek}
 \sum_{0 \le i\le n/K}{n\choose i}3^{{\frac{5n}{36}} + \frac{n}{3K}}  \leq 2^{0.249n}.
\end{align}
Since $1/n \ll \ep \ll 1$, we have that $ \ep \ll 1/K^2$.
By (\ref{nondec}), we may assume that $F=O\cup C$ with $C\subseteq E$ and $|C|\leq \alpha n$. Similarly as before, every maximal sum-free subset of $[n]$ in $F$ can be built from choosing a sum-free set $S\subseteq C$ (at most $2^{|C|}\le 2^{\alpha n}$ choices) and extending $S$ in $O$ to a maximal one. Fix an arbitrary sum-free set $S$ in $C$ where $|S|\geq 2$ and let $G:=L_S[O]$ be the link graph of $S$ on vertex set $O$. Since $O$ is sum-free and $\alpha \ll \eps$, Lemma~\ref{lem-mis} implies that, to prove the lemma,
it suffices to show that ${\rm{MIS}}(G)\leq 2^{(1/4-\ep)n}$. We will achieve this in two cases depending on the size of $S$.

\noindent\textbf{Case 1:} $|S|\geq 2 K^2$.

In this case, we will show that $G$ is `not too sparse and almost regular'. Then we apply Lemma~\ref{lem-ik}.

We first show that $\de(G)\ge |S|/2$ and $\De(G)\le 2|S|+2$, thus $\De(G)\le 5\de(G)$. Let $x$ be any vertex in $O$. 
If $s \in S$ such that $s<\max\{x,n-x\}$ then at least one of $x-s$ and $x+s$ is adjacent to $x$ in $G$. If $s \in S$ 
such that $s\geq \max\{x,n-x\}$ then $s-x$ is adjacent to $x$ in $G$. By considering all $s \in S$ this implies that $\deg_G (x) \geq |S|/2$ (we divide by $2$ here as an edge $xy$ may arise from two different elements of $S$).
For the upper bound consider $x \in O$. If $xy \in E(G)$ then $y=x+s$, $x-s$ or $s-x$ for some $s \in S$ and only two of these terms are positive. Further, there may be a loop at $x$ in $G$ (contributing $2$ to the degree of $x$ in $G$). Thus, $\deg_G (x) \leq 2|S|+2$, as desired.

Note that $\de(G)^{1/2}\ge K$.  Thus, applying Lemma~\ref{lem-ik} to $G$ with $k=5$ we obtain that
$${\rm{MIS}}(G)\le \sum_{0 \le i\le n/K}{n\choose i}3^{{\frac{5n}{36}} + \frac{n}{3K}}  \stackrel{(\ref{choosek})}{\leq} 2^{0.249n}.$$

\noindent\textbf{Case 2:} $2\leq |S|\leq 2 K^2$.

As in Case~1 we have that $\Delta (G) \leq 2|S|+2\leq 5K^2$. Additionally, we need to count triangles in $G$.
\begin{claim}
$G$ contains at most $24|S|^3$ triangles.
\end{claim}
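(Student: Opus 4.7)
The plan is to enumerate triangles in $G := L_S[O]$ by classifying them according to how each of the three edges is realized as a Schur triple with an element of $S$. For $u, v \in O$ with $u < v$, the edge $\{u,v\}$ of $G$ arises either because $v - u \in S$ (a ``difference edge'') or because $u + v \in S$ (a ``sum edge''), and possibly both. Hence each triangle with vertices $x < y < z$ admits at least one realization pattern drawn from $2^3 = 8$ patterns, indexed by an assignment of ``difference'' or ``sum'' to each of the edges $\{x,y\}, \{y,z\}, \{x,z\}$.

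The crucial observation is that the pattern in which all three edges are difference edges is impossible. Indeed, if $y-x, z-y, z-x$ all lie in $S$, then setting $s_1 := y-x$, $s_2 := z-y$, $s_3 := z-x$, one has $s_1 + s_2 = s_3$ with $s_1,s_2,s_3 \in S$, which is a Schur triple inside $S$, contradicting the sum-freeness of $S$. So this pattern contributes zero triangles.

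For each of the remaining seven patterns, I will argue that the triangle $\{x, y, z\}$ is uniquely determined by the ordered triple $(s_1, s_2, s_3) \in S^3$ of realizing elements. The defining relations for each pattern form a linear system in $x, y, z$; for instance, in the pattern $s_1 = y-x$, $s_2 = z-y$, $s_3 = x+z$, summing gives $2x = s_3 - s_1 - s_2$, which determines $x$, and then $y = x + s_1$ and $z = y + s_2$. The remaining six patterns are handled by analogous elementary linear algebra. This yields at most $|S|^3$ triangles per pattern, so summing over the seven surviving patterns gives at most $7|S|^3 \leq 24|S|^3$ triangles in $G$, as claimed.

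The key point is the use of sum-freeness to kill the all-difference pattern; without it, that pattern alone would give roughly $n \cdot |S|^2$ triangles (take $x$ arbitrary and $s_1, s_2 \in S$ determining $y$ and $z$), which is far too many. The rest of the argument is routine bookkeeping through the seven remaining sign patterns, each of which yields a uniquely solvable linear system.
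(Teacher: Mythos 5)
Your core approach is sound and is exactly what one expects here: classify triangles by assigning to each of the three edges either a ``difference'' realization ($s = v-u$) or a ``sum'' realization ($s = u+v$), kill the all-difference pattern via sum-freeness of $S$, and then bound the rest via linear algebra. The paper itself does not give a proof of this claim (it cites the proof of Lemma~3.4 of~\cite{BLST}), so there is no in-text argument to compare against, but your strategy is the natural one.

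However, there is a genuine slip in the final step. You assert that each of the seven remaining patterns yields a ``uniquely solvable linear system'' and so contributes at most $|S|^3$ triangles. That is false for three of them. Writing the pattern as an ordered string over $\{D,S\}$ for the edges $\{x,y\}, \{y,z\}, \{x,z\}$ (with $x<y<z$), the coefficient matrix is singular not only for $DDD$ but also for $DSS$, $SDS$, and $SSD$. For example, in $SSD$ the equations $x+y=s_1$, $y+z=s_2$, $z-x=s_3$ have a rank-$2$ system, and a solution exists if and only if $s_1+s_3=s_2$; if that consistency condition held there would be a one-parameter family of solutions and the $|S|^3$ bound would fail for that pattern. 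The argument is saved by the same mechanism that kills $DDD$: the consistency condition for each of the three singular patterns ($s_1+s_3=s_2$ for $SSD$, $s_1+s_2=s_3$ for $SDS$, $s_1+s_3=s_2$ for $DSS$) is itself a Schur triple in $S$, contradicting sum-freeness. So those patterns contribute zero triangles as well, and the remaining four patterns $DDS$, $DSD$, $SDD$, $SSS$ all have invertible (determinant $\pm 2$) systems. This yields the even sharper bound of $4|S|^3 \le 24|S|^3$. So the conclusion is correct, but the justification as written (``each of which yields a uniquely solvable linear system'') is wrong for three of the seven patterns and needs to be replaced by the sum-freeness argument you already used for $DDD$.
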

The claim is shown in the proof of Lemma 3.4 in~\cite{BLST}, so we omit the proof here. Let $T\subseteq V(G)$ such that $|T| \leq 24|S|^3$ and $G\setminus T$ is triangle-free.

Let $G_1$ denote the graph obtained from $G$ by removing all loops. Given any $x\in O$ and $s \in S$, one of $x-s, s-x$ is adjacent to $x$ in $G$.  In particular, if $2x\not = s$, then  one of $x-s, s-x$ is adjacent to $x$ in $G_1$. Therefore each $s \in S$ gives arise to at least $(|O|-1)/2$ edges in $G_1$. Given distinct $s,s' \in S$, there is at most one pair $x,y \in O$ such that $s,x,y$ and $s',x,y$ are both Schur triples. Thus, 
since $|S|\geq 2$,  this implies that $e(G_1)\geq |O|-2$.
Set $G':= G_1 \setminus T$. Note that $\Delta (G_1) \leq 5K^2$, $|G'| \leq |O|$ and $e(G')\geq |O|-2 -|T|5K^2 \geq 3|O|/4$.
Thus Corollary~\ref{use} implies that ${\rm MIS} (G_1) \leq 2^{(1/4-\ep) n}.$ Fact~\ref{newfact} therefore implies that
${\rm MIS}(G) \leq 2^{(1/4-\ep) n}$, as desired.
\endproof

Note that the argument in Case~2 of Lemma~\ref{lem-odd} immediately implies the following result.
\begin{lemma}\label{2bound}
 Given any distinct $x,x' \in E$, 
$${\rm MIS}(L_{ \{x,x'\} }[O]) \leq 2^{(1/4-\ep) n}.$$ 
\end{lemma}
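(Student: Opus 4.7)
The plan is to apply the argument of Case~2 in the proof of Lemma~\ref{lem-odd} directly, taking $S := \{x, x'\}$. This is legitimate: $|S| = 2$ falls squarely in the range handled by Case~2, and (crucially) the sum-freeness of $S$ is never actually used in that case---only the structural estimates on the link graph $L_S[O]$ come into play.

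Specialising the Case~2 numerics to $|S| = 2$, with $G := L_{\{x,x'\}}[O]$ I would record the three ingredients: $\Delta(G) \le 2|S|+2 = 6$, at most $24|S|^3 = 192$ triangles in $G$, and $e(G_1) \ge |O| - 2$ where $G_1$ is obtained from $G$ by deleting all loops. Choosing $T \subseteq V(G_1)$ of constant size (at most $192$) so that $G_1 \setminus T$ is triangle-free, one has $e(G_1 \setminus T) \ge |O| - 2 - 6 \cdot 192 \ge 3|O|/4$ for large $n$, i.e.\ a surplus of $\Theta(n)$ edges over $|G_1 \setminus T|/2$.

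Corollary~\ref{use} then applies to $G_1$ with $D = 6$, constant $|T|$, and $k = \Theta(n)$, yielding ${\rm MIS}(G_1) \le 2^{(1/4 - \ep)n}$ provided $\ep$ is sufficiently small, as it is by the hierarchy~\eqref{hier}. Fact~\ref{newfact} then transfers the bound from $G_1$ to $G$, giving the desired conclusion. There is no real obstacle, since the entire argument is a direct specialisation of Case~2; the only step of substance is to \emph{verify} that sum-freeness of $S$ was indeed never invoked in Case~2, which is precisely the sense in which the preceding remark ``immediately implies'' the lemma.
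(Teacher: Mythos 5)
Your approach is exactly the paper's (the paper disposes of this lemma with the one-line remark that Case~2 of Lemma~\ref{lem-odd} ``immediately implies'' it), but your central assertion --- that ``the sum-freeness of $S$ is never actually used in that case'' --- is false, and this is precisely the step that needs checking. The triangle-count claim that $L_S[O]$ has at most $24|S|^3$ triangles is proved in~\cite{BLST} under the standing assumption that $S$ is sum-free, and it genuinely needs that assumption: if $\{x,x'\}$ is not sum-free, say $x'=2x$, then $x+x=x'$ is a Schur triple inside $S=\{x,2x\}$, and for every odd $a$ with $a-x,a,a+x \in O\cap[n]$ the triple $\{a-x,a,a+x\}$ is a triangle in $L_{\{x,2x\}}[O]$ (the three pairwise differences are $x$, $x$, and $2x$, all in $S$). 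That is $\Theta(n)$ triangles, so no set $T$ of bounded size makes $G\setminus T$ triangle-free; the whole Corollary~\ref{use} step collapses because the $101|T|/100$ correction is then $\Theta(n)$ and swallows the gain. So as written your argument (and the paper's terse remark, if read as covering all distinct pairs) does not establish the lemma for pairs of the form $\{x,2x\}$.

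The good news is that the gap is harmless. In the only place Lemma~\ref{2bound} is invoked --- the lower bound of (\ref{1bound2}) inside Lemma~\ref{boundingf'} --- the pairs $\{y,y'\}$ that actually arise satisfy ``$S\cup\{y,y'\}$ is sum-free'', so in particular $\{y,y'\}$ is sum-free; the non-sum-free pairs contribute nothing. The clean fix is to either (a) add the hypothesis that $\{x,x'\}$ is sum-free to Lemma~\ref{2bound}, which rules out $x'=2x$ and $x=2x'$ and is all that is needed, after which your specialisation of Case~2 is correct verbatim, or (b) keep the lemma as stated but handle the two exceptional pairs separately by a direct analysis of $L_{\{x,2x\}}[O]$ (it is roughly the square of a path and one can check ${\rm MIS}$ is still $2^{(c+o(1))n}$ for some $c<1/4$), which is more work for no gain. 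Either way, you should not claim that sum-freeness plays no role in Case~2; it is exactly what keeps the triangle count bounded.
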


Given $n \in \mathbb N$, let $f' _{\max} (n)$ denote the number of maximal sum-free subsets of $[n]$ that contain precisely one even number. The next result implies that  $f' _{\max} (n)$ is approximately equal to the  number of maximal independent sets in the link graphs $L_x [O]$ where $x \in E$.

\begin{lemma}\label{boundingf'}
\begin{align}\label{1bound}
\sum _{x\in E} {\rm MIS} (L_x [O]) - 2 \cdot \sum _{x \not = x'\in E} {\rm MIS} (L_{ \{x,x'\} }[O]) \leq f' _{\max} (n) \leq \sum _{x\in E} {\rm MIS} (L_x [O]) .
\end{align}
In particular, 
\begin{align}\label{1bound2}
\sum _{x\in E} {\rm MIS} (L_x [O]) - 2^{(1/4-\ep /2)n} \leq f' _{\max} (n) \leq \sum _{x\in E} {\rm MIS} (L_x [O]) .
\end{align} 
\end{lemma}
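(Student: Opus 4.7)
My plan is to prove~\eqref{1bound} by a direct counting argument via Lemma~\ref{lem-mis}, and then to deduce~\eqref{1bound2} from Lemma~\ref{2bound} by absorbing the pair count into the exponent. The upper bound in~\eqref{1bound} is essentially immediate: any maximal sum-free subset $M$ of $[n]$ with exactly one even element decomposes uniquely as $M=\{x\}\cup I$ with $x\in E$ and $I\subseteq O$, and Lemma~\ref{lem-mis} applied with $S=\{x\}$ and $B=O$ forces $I$ to be a maximal independent set of $L_x[O]$. Summing ${\rm MIS}(L_x[O])$ over $x\in E$ yields the bound.

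The lower bound in~\eqref{1bound} carries the real content. The subtlety is that a pair $(x,I)$ with $I\in{\rm MIS}(L_x[O])$ need not produce a maximal sum-free subset of $[n]$: the set $\{x\}\cup I$ is sum-free and maximal within $\{x\}\cup O$, but some other even element $x'\ne x$ could still be addable to it without creating a Schur triple. For each ordered pair $x\ne x'$ in $E$, I would let $B_{x,x'}$ denote the set of those $I\in{\rm MIS}(L_x[O])$ for which $\{x,x'\}\cup I$ is still sum-free. A union bound then gives
$$f'_{\max}(n) \geq \sum_{x\in E}{\rm MIS}(L_x[O]) - \sum_{x\in E}\sum_{x'\in E\setminus\{x\}}|B_{x,x'}|.$$
The crucial observation is that $|B_{x,x'}|\le{\rm MIS}(L_{\{x,x'\}}[O])$: sum-freeness of $\{x,x'\}\cup I$ makes $I$ independent in $L_{\{x,x'\}}[O]$, and since $L_x[O]$ is a subgraph of $L_{\{x,x'\}}[O]$, maximality of $I$ in the smaller link graph automatically lifts to maximality in the larger one. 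Collapsing the ordered double sum to unordered pairs then produces the factor of $2$ appearing in~\eqref{1bound}.

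Finally, to upgrade~\eqref{1bound} to~\eqref{1bound2} I would invoke Lemma~\ref{2bound} to bound ${\rm MIS}(L_{\{x,x'\}}[O])\le 2^{(1/4-\ep)n}$ for every pair, while the number of unordered pairs $\{x,x'\}\subseteq E$ is at most $n^2$. Since $n^2\cdot 2^{(1/4-\ep)n}\le 2^{(1/4-\ep/2)n}$ for $n$ sufficiently large, the correction term is swallowed. I do not foresee any real obstacle; the only step that might look delicate is the lift of maximality from $L_x[O]$ to $L_{\{x,x'\}}[O]$, but this is immediate from the inclusion of edge sets of the two link graphs.
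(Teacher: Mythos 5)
Your proof is correct and follows essentially the same route as the paper: the paper isolates the observation that any extension of $\{x\}\cup I$ must be even as Claim~\ref{onlyeven}, and then bounds the bad pairs by $2\cdot\sum_{x\neq x'}{\rm MIS}(L_{\{x,x'\}}[O])$ exactly as you do. Your explicit justification that maximality lifts from $L_x[O]$ to $L_{\{x,x'\}}[O]$ (via the subgraph relation) is a step the paper leaves implicit, so if anything your write-up is slightly more careful there.
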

\proof
Given any maximal sum-free subset $M$ of $[n]$ that contains precisely one even number $x$, $M\setminus \{x\}$ is a maximal independent set in $L_x [O]$. So the upper bound in (\ref{1bound}) follows.

\begin{claim}\label{onlyeven}
Suppose $x \in E$ and $S$ is a maximal independent set in $L_x[O]$. Let $M$ denote the maximal sum-free subset of $[n]$ that contains $S \cup \{x\}$. Then $M \setminus S \subseteq E$.
\end{claim}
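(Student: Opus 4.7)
\medskip

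\noindent\textit{Proof plan.} The plan is to argue by contradiction: suppose there exists $y \in (M\setminus S)$ that is odd, and then show that $S\cup\{y\}$ is still an independent set of $L_x[O]$, violating the maximality of $S$. Since $S\subseteq O$ by construction (as $S$ is an independent set in the graph $L_x[O]$, whose vertex set is $O$) and $x\in E$, showing $M\setminus S\subseteq E$ amounts exactly to ruling out any odd element of $M$ outside $S$.

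So fix any such $y\in (M\setminus S)\cap O$. I need to check that $y$ is non-adjacent in $L_x[O]$ to every vertex of $S$ and that $y$ does not carry a loop. By the definition of the link graph, an edge between $y$ and some $y'\in S$ arises only from a Schur triple of the form $\{y,y',x\}$, i.e.\ one of $y+y'=x$, $y+x=y'$, $y'+x=y$ holds. In each case all three of $y,y',x$ lie in $M$ (here $y'\in S\subseteq M$ and $y,x\in M$ by assumption, and $y\neq y'$ since $y\notin S$), which exhibits a Schur triple inside $M$ and contradicts that $M$ is sum-free. Similarly, a loop at $y$ in $L_x[O]$ would come from $\{y,y,x\}$ or $\{y,x,x\}$ being a Schur triple; since $y$ is odd and $x$ is even, the only possibility is $2y=x$, which again yields a Schur triple inside $M$.

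Therefore, $S\cup\{y\}$ is an independent set of $L_x[O]$ strictly larger than $S$, contradicting the maximality of $S$ in $L_x[O]$. Hence no such odd $y$ exists, and every element of $M\setminus S$ must be even, as claimed.

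The argument is essentially a direct unwinding of the definition of the link graph and sum-freeness; the only subtlety to take care of is the loop case, where one must use the parity discrepancy between $x\in E$ and $y\in O$ to rule out $y=2x$ and reduce to $2y=x$. Once this is noted, the rest is a clean contradiction with the sum-freeness of $M$.
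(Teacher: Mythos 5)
Your proof is correct and follows essentially the same route as the paper: the paper passes to $S' := M \cap O \supsetneq S$ at once and observes that sum-freeness of $M$ makes $S'$ independent in $L_x[O]$, while you argue element-by-element via $S\cup\{y\}$, but the core idea (an odd element of $M$ outside $S$ would contradict maximality of $S$) is identical.
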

\proof
Suppose not. Then there exists $S' \subseteq M$ such that $S \subset S' \subseteq O$. But as $M$ is sum-free, $S'$ is an independent set in $L_x [O]$, a contradiction to the maximality of $S$.
\endproof
Suppose $y \in E$ and $S$ is a maximal independent set in $L_y[O]$. If $S \cup \{y\}$ is not a maximal sum-free subset of $[n]$ then Claim~\ref{onlyeven} implies that there exists $y' \in E \setminus \{y\}$ such that $S\cup \{y,y'\}$ is sum-free. In particular, $S$ is a maximal independent set in $L_{\{y,y'\}} [O]$. In total there are at most
$$2\cdot \sum _{x \not = x'\in E} {\rm MIS} (L_{ \{x,x'\} }[O])$$
such pairs $S,y$. Thus, the lower bound in (\ref{1bound}) follows.

The lower bound in (\ref{1bound2}) follows since, by Lemma~\ref{2bound},
$$2\cdot \sum _{x \not = x'\in E} {\rm MIS} (L_{ \{x,x'\} }[O]) \leq  2n^2 \cdot 2^{(1/4-\ep) n}\leq  2^{(1/4-\ep/2) n},$$ where the last inequality follows since $n$ is sufficiently large.
\endproof

The next result determines $\sum _{x\in E} {\rm MIS} (L_x [O])$ asymptotically and thus, together with Lemma~\ref{boundingf'} determines, asymptotically, $f'_{\max} (n)$.

\begin{lemma}\label{lem:count}
Given $1 \leq i \leq 4$, there exists a constant $D'_i $ such that, if $n\equiv i \mod 4$,
$$(D'_i - \eps ) 2^{n/4} \leq \sum _{x\in E} {\rm MIS} (L_x [O])\leq  (D'_i + \eps ) 2^{n/4}.$$
\end{lemma}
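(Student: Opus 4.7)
I start by observing the structure of $L_x[O]$ for even $x$: its edges are either sum edges $\{y,x-y\}$ (for odd $y<x/2$) or difference edges $\{y,y+x\}$ (for odd $y$ with $y+x\le n$), with a loop at $y=x/2$ when $x/2$ is odd. A short case analysis shows $L_x[O]$ is triangle-free for every even $x$: any triangle of odd integers whose pairs sum or differ by $x$ forces two of them to coincide. Parametrising $x=n^*-2j$ where $n^*$ is the largest even integer in $[n]$ and $j\ge 0$, the plan is to establish (A) that for each fixed $j\in[0,K]$ (with $K=K(\eps)$ a large constant) and each residue $i=n\bmod 4$, there is a constant $g(j,i)$, independent of $n$, with ${\rm MIS}(L_x[O])=g(j,i)\cdot 2^{n/4}$ for $n$ sufficiently large; and (B) that $\sum_{j>K}{\rm MIS}(L_{n^*-2j}[O])\le \eps\cdot 2^{n/4}$.

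For (A), I show that when $n$ is much larger than $j$, the graph $L_x[O]$ decomposes into simple pieces. Each odd $y$ with $y+x\le n$ is adjacent both to its sum partner $x-y$ and its difference partner $y+x$, placing $y$ at the centre of a $P_3$ on $\{x-y,y,y+x\}$. For $j$ bounded, these $P_3$s are pairwise vertex-disjoint once $n$ is sufficiently large (the three blocks $\{y\}$, $\{x-y\}$, $\{y+x\}$ occupy disjoint intervals of $[n]$). The remaining sum edges $\{y,x-y\}$ for larger odd $y<x/2$ form a matching that is also disjoint from the $P_3$ structure, and a loop may be present at $y=x/2$, determined by the parities of $j$ and $i$. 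Since MIS factorises over disjoint components --- each $P_3$ contributing factor $2$, each matching edge factor $2$, each loop factor $1$ (forcing its vertex out), and each isolated vertex factor $1$ (forced in) --- I can compute ${\rm MIS}(L_x[O])$ explicitly and verify it has the form $g(j,i)\cdot 2^{n/4}$.

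For (B), I apply Lemma~\ref{lem-mis-p3} to the vertex-disjoint $P_3$s constructed above: whenever $K\le j\le n/6$, there are at least $j$ such $P_3$s in $L_x[O]$, giving ${\rm MIS}(L_x[O])\le 2^{n/4-j/25}$, and the geometric tail $\sum_{j\ge K}2^{n/4-j/25}$ contributes at most $\eps\cdot 2^{n/4}/2$ provided $K$ is large enough. For $j>n/6$ (i.e.\ $x<2n/3$), $L_x[O]$ has $\Om(n)$ difference edges and $\De(L_x[O])\le 5$; applying Lemma~\ref{dense} to the underlying simple graph, after using Fact~\ref{newfact} to drop any loops, gives ${\rm MIS}(L_x[O])\le 2^{n/4-\Om(n)}$, whose total over such $j$ is $o(2^{n/4})$. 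Finally, $D'_i:=\sum_{j\ge 0}g(j,i)$ converges, since $g(j,i)\le 2^{-j/25}$ for large $j$, and collecting the estimates yields $\sum_{x\in E}{\rm MIS}(L_x[O])=(D'_i\pm\eps)\cdot 2^{n/4}$, as required.

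The main obstacle is the careful bookkeeping needed for (A). For each of the four residues $i=n\bmod 4$ and each $j\in [0,K]$, one must enumerate exactly the $P_3$s, matching edges, loops, and any isolated vertex near the top of $[n]$; this requires tracking the parities of $j$, $n$, and $x/2$ simultaneously, and handling boundary effects near $y=x/2$ (where the sum pairing becomes a loop) and near the largest odd vertex (which can be isolated when $n$ is odd). Obtaining a clean expression for $g(j,i)$ --- and hence an explicit constant $D'_i$ --- is where the bulk of the technical work lies.
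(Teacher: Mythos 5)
Your approach is essentially the same as the paper's: decompose $L_x[O]$ (for even $x$ near $n$) into vertex-disjoint $P_3$s, matching edges, and a possible loop at $x/2$, compute ${\rm MIS}$ exactly to obtain a geometric series over $x$, and bound the small-$x$ tail via Lemma~\ref{lem-mis-p3}. The only superficial difference is that the paper observes the exact component decomposition holds for \emph{all} even $m>2n/3$ (so that ${\rm MIS}(L_m[O])=2^{m/4}$ or $2^{(m-2)/4}$ there) and uses a single cutoff at $2n/3$, whereas you compute exactly only for bounded $j$ and use a three-way split with an intermediate $P_3$ range and a density argument via Lemma~\ref{dense} for $x<2n/3$.
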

\proof
Suppose that $n\equiv 0 \mod 4$. The proofs for the other cases are essentially identical, so we omit them. Let $2n/3 < m \leq n$ be even.
Consider $G:=L_m [O]$. The edge set of $G$ consists of precisely the following edges:
\begin{itemize}
\item An edge between $i$ and $m-i$ for every odd $i< m/2$;
\item A loop at $m/2$ if $m/2$ is odd;
\item An edge between $i$ and $m+i$ for all odd $i \leq n-m <n/3$.
\end{itemize}
In particular, since $m >2n/3$, if $i <m/2$ is odd then in $G$, $m-i$ is only adjacent to $i$. Altogether this implies that if $m/2$ is even then $G$ is the disjoint union of:
\begin{itemize}
\item $(n-m)/2$ copies of $P_3$;
\item A matching containing $(3m-2n)/4$ edges.
\end{itemize}
In this case ${\rm MIS} (G) =2^{(n-m)/2} \times 2^{(3m-2n)/4} =2^{m/4}$.
If $m/2$ is odd then $G$ is the disjoint union of:
\begin{itemize}
\item $(n-m)/2$ copies of $P_3$;
\item A single loop;
\item A matching containing $(3m-2n-2)/4$ edges.
\end{itemize}
In this case ${\rm MIS} (G) =2^{(m-2)/4}$.

Thus, 
\begin{align}\label{summy}
\sum _{m\in E  \, : \, m >2n/3} {\rm MIS} (L_m [O]) & \leq \sum ^{n} _{m=4 \, : \, m \equiv 0 \, {\rm mod } \, 4} 2^{m/4} + \sum ^{n} _{m=2 \, : \, m \equiv 2 \, {\rm mod}\, 4} 2^{(m-2)/4} \nonumber \\
& = \sum _{m=1} ^{n/4} 2^m + \sum _{m=0} ^{n/4-1} 2^m \leq (3 +\ep /2)2^{n/4}.
\end{align}
Further,
\begin{align}\label{summy1}
\sum _{m\in E  \, : \, m >2n/3} {\rm MIS} (L_m [O]) \geq (3-\ep /2)2^{n/4} - \sum ^{2n/3} _{m=1} 2^{m/4} \geq (3-\eps)2^{n/4}.
\end{align}

Consider $m \in E$ where $m \leq 2n/3$ and set $G:=L_m [O]$. It is easy to see that $G$ is the disjoint union of paths that contain at least $3$ vertices and in the case when $m/2$ is odd, an additional path of length at least $2$ which contains a vertex (namely $m/2$) with a loop. Every such graph on $ n/2$ vertices contains at least $n/10-1$ vertex-disjoint copies of $P_3$.
%\COMMENT{extremal case when $G$ consists of copies of $P_5$} 
Therefore, by Lemma~\ref{lem-mis-p3} we have that
\begin{align}\label{summy2}
\sum _{m\in E  \, : \, m \leq 2n/3} {\rm MIS} (L_m [O]) \leq n 2^{n/4- n/250+1}.
\end{align}
Overall, we have that
$$(3-\eps )2^{n/4} \stackrel{(\ref{summy1})}{\leq} \sum _{x\in E} {\rm MIS} (L_x [O]) \stackrel{(\ref{summy}),(\ref{summy2})}{\leq} (3 +\eps /2)2^{n/4} + n 2^{n/4- n/250+1} \leq (3 +\eps )2^{n/4},$$
as desired.
\endproof
We showed that the constant $D'_4$ in Lemma~\ref{lem:count} is equal to $3$. By following the argument given in the proof, it is easy to see that 
\begin{equation}\label{eq-Di-prime}
   D'_1= 3\cdot 2^{-1/4},\quad  D'_2= 2^{3/2}, \quad D'_3 =2^{5/4}, \quad \mbox{ and } D'_4=3.
\end{equation}

The next lemma shows that almost all of the maximal sum-free subsets of $[n]$ that contain precisely one even number lie in type (c) containers only.
\begin{lemma}\label{noovercount1}
There are at most $\eps 2^{n/4}$ maximal sum-free subsets of $[n]$ that contain precisely one even number and that lie in type (a) or (b) containers.
\end{lemma}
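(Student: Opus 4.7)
The plan is to bound the contributions from type (a) and type (b) containers separately.

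For type (a) containers, Lemma~\ref{lem-non-ext} gives $f_{\max}(F) \leq 2^{0.249 n}$ for each such $F$. Since $|\cF| \leq 2^{\alpha n}$ and $\alpha \ll \eps$, the total contribution from these containers is at most $2^{(\alpha + 0.249)n} \leq \eps 2^{n/4}/2$.

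For a type (b) container $F = A \cup B$ with $|A| \leq \alpha n$ and $B \subseteq [(1/2-\gamma)n, n]$, the inequality $\gamma \leq 0.03$ yields $B \subseteq [0.47n, n]$, so any $M \subseteq F$ satisfies $|M \cap [1, 0.47n]| \leq |A| \leq \alpha n$. I will therefore bound the number of maximal sum-free subsets $M$ of $[n]$ containing exactly one even number $x$ and satisfying $|M \cap [1, 0.47n]| \leq \alpha n$, which majorizes the type (b) contribution and does not depend on the particular container. Setting $I_1 := (M \cap [1, 0.47n]) \setminus \{x\}$ and $I_2 := (M \cap (0.47n, n]) \setminus \{x\}$, I enumerate: first $x$ (at most $n/2$ choices); then $I_1$, a subset of the odd integers in $[1, 0.47n]$ of size at most $\alpha n$, giving at most $\sum_{k=0}^{\alpha n}\binom{n/4}{k} \leq 2^{\eps n/100}$ options once $\alpha$ is chosen small enough in the hierarchy; and finally $I_2$, which by the maximality of $M$ must be a maximal independent set in the link graph $G := L_{I_1 \cup \{x\}}[O \cap (0.47n, n]]$. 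Applying the Moon--Moser bound (together with Fact~\ref{newfact} to discard loops) to $G$, whose vertex set has size at most $0.27n$, yields at most $3^{0.09n} \leq 2^{0.15n}$ maximal independent sets. Multiplying the three factors gives at most $n \cdot 2^{\eps n/100} \cdot 2^{0.15n} \leq \eps 2^{n/4}/2$ for sufficiently large $n$, completing the type (b) bound.

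The main subtlety is verifying that $I_2$ is a maximal (and not merely an) independent set in $G$, since a priori the maximality of $M$ is only with respect to adding elements of $[n]$, not with respect to the link graph on one specific subset. This relies on the parity observation that every Schur triple contains at least one even element: for any $y \in O \cap (0.47n, n] \setminus I_2$, the maximality of $M$ forces a Schur triple in $M \cup \{y\}$, and by parity this triple must involve the unique even number $x \in M$. A short case analysis then shows that this Schur triple is registered either as an edge from $y$ to some vertex of $I_2$ in $G$ (when the triple has the form $\{y, r, x\}$ with $r \in I_2$) or as a loop at $y$ (when the third vertex of the triple also lies in $I_1 \cup \{x\}$, or when the triple has the form $\{y, y, x\}$). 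Together with the type (a) bound, the total count is at most $\eps 2^{n/4}$, as required.
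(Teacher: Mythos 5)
Your argument is correct and arrives at the same bound, but it organizes the type (b) count somewhat differently from the paper. The paper fixes a type (b) container $F$ (contributing a $2^{\alpha n}$ factor), sets $F' := F \cap O$ (so $|F'| \leq 0.27n$), and applies Moon--Moser to $L_m[F']$ for each even $m$. You instead extract the uniform consequence that any $M$ in a type (b) container satisfies $|M \cap [1, 0.47n]| \leq \alpha n$, enumerate the small odd part $I_1$ directly, and apply Moon--Moser to the container-independent link graph $L_{I_1 \cup \{x\}}[O \cap (0.47n, n]]$ (using Fact~\ref{newfact} to discard loops). The trade is the sum over containers for a sum over $I_1$; both cost $2^{o(n)}$ and both leave a Moon--Moser factor of $3^{0.09n}$ on a graph of $\approx 0.27n$ vertices, so the two routes are about equally efficient. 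One small economy: the ``main subtlety'' paragraph justifying that $I_2$ is a \emph{maximal} independent set in $G$ is exactly the content of Lemma~\ref{lem-mis} applied with $S = I_1 \cup \{x\}$ and $B = O \cap (0.47n, n]$ (both sum-free, since $S \subseteq M$ and $B \subseteq O$, and $S \cup I_2 = M$ is maximal), so you can cite that lemma directly rather than re-derive the parity case analysis.
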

\proof
By Lemma~\ref{lem-non-ext}, at most $2^{0.249n} \leq \eps 2^{n/4} /2$ such maximal sum-free subsets of $[n]$ lie in type (a) containers.

Suppose that $M$ is a maximal sum-free subset of $[n]$ that lies in a type  (b) container $F$ and only contains one even number. Define $F':=F \cap O$. Since $F$ is of type (b),
$|F'|\leq (0.53n)/2+\alpha n \leq 0.27n$. By Lemma~\ref{lem-mis}, $M=I \cup \{m\}$ where $m$ is even and $I$ is a maximal independent set in $G:=L_m [F']$. By the Moon--Moser bound,
$${\rm MIS} (G) \leq 3^{0.09n} \leq 2^{(1/4-\eps)n}.$$
In total, there are at most $2^{\alpha n}$ choices for $F$ and at most $ n/2$ choices for $m$.
Thus, there are at most
$$2^{\alpha n} \times \frac{ n}{2} \times 2^{n/4-\eps n} \leq \eps 2^{n/4} /2$$
 maximal sum-free subsets of $[n]$ that that lie in  type  (b) containers  and only contain one even number, as desired.
\endproof

Notice that this completes the proof of Theorem~\ref{thm-main}. Indeed, for each $1 \leq i \leq 4$, set $C_i:=D_i+D'_i$. Lemmas~\ref{lem-non-ext},~\ref{lem-intmain},~\ref{lem-odd},~\ref{boundingf'},~\ref{lem:count} and~\ref{noovercount1} together imply that if $n \equiv i \mod 4$, then 
$$(C_i - \eta )2^{n/4} \leq f_{\max} (n) \leq (C_i + \eta )2^{n/4},$$ as desired.

\subsection{Bounds on the constants $C_i$ in Theorem~\ref{thm-main}}\label{explain}
In the proof of Theorem~~\ref{thm-main} we hid one slight subtlety:
indeed, in equation (\ref{ultimateaim}) the constant $C_i$ actually depends on $\eta$ as well as $i$.
So in the proof of Theorem~\ref{thm-main} what we have  shown is given any $\eta>0$, 
 there is a constant $C_{i,\eta}$ (i.e.~dependent on $i$ and $\eta$) such that if $n$ is sufficiently large and $n \equiv i \mod 4$ then
\begin{align*}
(C_{i,\eta} - \eta )2^{n/4} \leq f_{\max} (n) \leq (C_{i,\eta} + \eta )2^{n/4}.
\end{align*}
This immediately implies the existence of the desired $C_i$ in the statement of the theorem (i.e.~$C_i$ is the limit of the $C_{i,\eta}$ as $\eta \rightarrow 0$).

In the proof we have that  $C_{i,\eta}=D_{i,\eta}+D'_{i,\eta}$ where now $D_{i,\eta}$ is playing the role of what was $D_i$ and $D'_{i,\eta}$ plays the role of $D'_{i}$.
Equation (\ref{eq-Di-prime}) gives the precise values of the $D'_{i,\eta}$ (these only depend on $i$ not $\eta$).
As mentioned after Lemma~\ref{lem-intmain}, one can efficiently determine the value of $D_{i,\eta}$. The time taken depends on $K$, which itself depends on $\eps$ and thus $\eta$ (recall the definition of $\eps$ depends only on $\eta$).

Altogether this implies one can determine $C_{i,\eta}$ in constant time (i.e.~only depending on $\eta$). Since $C_i$ is the limit of the $C_{i,\eta}$ as $\eta \rightarrow 0$, this 
implies
$C_i$ can also be computed up to any additive error (say $\eta '$) in constant time (i.e.~depending only on $\eta '$).

%%%%%%%%%%%%%%%%%%%%%%%%%%%%%%%%

\section{Maximal sum-free sets in abelian groups}\label{group}
Throughout this section, unless otherwise specified, $G$ will be an abelian group of order $n$ and we denote by $\mu(G)$ the size of the largest sum-free subset of $G$. Denote by $f(G)$ the number of sum-free subsets of $G$ and by $\fm(G)$ the number of maximal sum-free subsets of $G$. Given a set $F \subseteq G$, we write 
$\fm (F)$ for the number of maximal sum-free subsets of $G$ that lie
in $F$.

The study of sum-free sets in abelian groups dates back to the 1960s. Although Diananda and Yap~\cite{yan} determined $\mu (G)$ for a large class of abelian groups $G$, it was not until 2005 that Green and Ruzsa~\cite{GR-g} determined $\mu(G)$ for all such $G$. In particular, for every finite abelian group $G$, $2n/7 \leq \mu (G) \leq n/2$.
Further, Green and Ruzsa~\cite{GR-g} determined $f(G)$ up to an error term in the exponent for all $G$, showing that $f(G)=2^{(1+o(1))\mu(G)}$. 

Given $G$, what can we say about $\fm(G)$? Is it also the case that $\fm(G)$ is exponentially smaller than $f(G)$? 
Wolfovitz~\cite{wolf}  proved that $\fm(G)\le 2^{0.406n+o(n)}$ for every finite  group $G$.
For even order abelian groups $G$ this answers the second question in the affirmative since $\mu (G)=n/2$ for such groups.

Our next result strengthens the result of Wolfovitz for abelian groups, and implies that indeed $\fm(G)$ is exponentially smaller than $f(G)$ for all finite abelian groups $G$. Let $G$ be fixed.
By a container lemma~\cite[Proposition~2.1]{GR-g} and a removal lemma~\cite[Theorem~1.4]{G-R} for abelian groups, there exists a collection of
containers $\cF$ such that:

(i) $|\cF|=2^{o(n)}$ and $F \subseteq G$ for all $F \in \cF$;

(ii) Given any $F \in \cF$, $F=B\cup C$ where $B$ is sum-free with size $|B|\leq \mu (G)$ and $|C|=o(n)$;

(iii) Given any sum-free subset $S$ of $G$, there is an $F \in \mathcal F$ such that $S \subseteq F$.

\noindent 
Given sets $S,T \subseteq G$, we can define the link graph $L_S[T]$ analogously to the integer case. In particular, it is easy to check that an analogue of Lemma~\ref{lem-mis} holds for such link graphs.

Let $F \in \cF$ be fixed.
Every maximal sum-free subset of $G$ contained in $F$ can be chosen by picking a sum-free set $S$ in $C$ (at most $2^{o(n)}$ choices by (ii)), and extending it in $B$ (at most $\mis(L_S[B])\le 3^{|B|/3}\leq 3^{\mu(G)/3}$ choices by Lemma~\ref{lem-mis} for abelian groups and the Moon-Moser theorem). Therefore, together this implies the following result.
\begin{prop}\label{easyb} Let $G$ be an abelian group of order $n$.
Then
\begin{align}\label{eq-g}
\fm(G)\le 3^{\mu(G)/3+o(n)}.
\end{align}
\end{prop}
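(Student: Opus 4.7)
The plan is to combine the container machinery with a link-graph argument, almost exactly as in the integer case, but now working in the abelian group $G$. The setup is already essentially laid out: the container lemma produces a family $\mathcal F$ of size $2^{o(n)}$ such that every sum-free subset of $G$ lies in some $F \in \mathcal F$, and each such $F$ decomposes as $F = B \cup C$ where $B$ is sum-free with $|B|\le \mu(G)$ and $|C| = o(n)$. So it suffices to bound $\fm(F)$ for a single container $F$ and then multiply by $|\mathcal F| = 2^{o(n)}$.

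To bound $\fm(F)$, I will encode each maximal sum-free $M \subseteq F$ by the pair $(S, I)$ where $S := M \cap C$ and $I := M \cap B$. First I would observe that the number of choices for $S$ is at most $2^{|C|} = 2^{o(n)}$, since $S$ is an arbitrary sum-free subset of $C$ (and in particular an arbitrary subset). Once $S$ is fixed, the analogue of Lemma~\ref{lem-mis} for abelian groups (whose proof transfers verbatim, since the only ingredients are that $S \cup I$ is sum-free and that no element of $B$ can be added to $S \cup I$ without creating a Schur triple) tells us that $I$ must be a maximal independent set in the link graph $L_S[B]$. Hence the number of choices for $I$ given $S$ is at most $\mis(L_S[B])$.

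The final step is to bound $\mis(L_S[B])$. Here I would simply invoke the Moon--Moser theorem, which gives $\mis(H) \le 3^{|H|/3}$ for any graph $H$ (graphs with loops only reduce the count, by the analogue of Fact~\ref{newfact}). Since $|L_S[B]| = |B| \le \mu(G)$, this yields $\mis(L_S[B]) \le 3^{\mu(G)/3}$. Multiplying the three estimates gives
\[
\fm(G) \;\le\; |\mathcal F| \cdot 2^{o(n)} \cdot 3^{\mu(G)/3} \;=\; 2^{o(n)} \cdot 3^{\mu(G)/3} \;=\; 3^{\mu(G)/3 + o(n)},
\]
which is \eqref{eq-g}.

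The argument is essentially routine given the ingredients already collected in the excerpt, so I do not expect a genuine obstacle. The only point that needs a word of justification is that Lemma~\ref{lem-mis} (stated for $[n]$) transfers to the abelian group setting; but its proof only uses the definitions of sum-freeness and of the link graph, both of which make sense verbatim in $G$. Everything else is a clean two-step count (choose $S$ in $C$, then extend inside $B$) followed by an application of the Moon--Moser bound.
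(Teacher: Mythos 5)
Your proposal is correct and follows essentially the same route as the paper: use the container family, decompose each container $F$ as $B\cup C$, count choices of $S\subseteq C$ trivially by $2^{o(n)}$, and then bound the number of extensions in $B$ by $\mis(L_S[B])\le 3^{|B|/3}\le 3^{\mu(G)/3}$ via the abelian-group analogue of Lemma~\ref{lem-mis} and Moon--Moser. There is nothing to add; the paper's argument is exactly this two-step count.
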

We do not know how far from tight the bound in Proposition~\ref{easyb} is.  In particular, it would be interesting to establish whether the following bound holds.
\begin{ques}\label{q2}
Given an abelian group $G$ of order $n$, is it true that $f_{\max}(G)\le 2^{\mu(G)/2+o(n)}$?
\end{ques}

Let $Z_p^k:=Z_p\otimes Z_p\otimes\cdots\otimes Z_p$. For the group $Z_2^k$, the answer to the above question is affirmative and the upper bound is essentially tight.
\begin{prop}\label{thm-z2}
The number of maximal sum-free subsets of $Z_2^k$ is $2^{(1+o(1))\mu(Z_2^k)/2}$.
\end{prop}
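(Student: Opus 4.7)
The target is $\fm(Z_2^k) = 2^{(1+o(1))n/4}$, since $\mu(Z_2^k) = n/2$ (any nontrivial coset of an index-$2$ subgroup is sum-free of size $n/2$). I would prove this via matching lower and upper bounds.

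For the lower bound, fix an index-$2$ subgroup $H$ of $G := Z_2^k$, denote its nontrivial coset by $gH$, and pick any $c \in H \setminus \{0\}$. Since every element of $Z_2^k$ is an involution, $x \mapsto x + c$ is a fixed-point-free involution of $gH$, partitioning $gH$ into $n/4$ pairs $\{x, x+c\}$. For each of the $2^{n/4}$ transversals $T \subseteq gH$, the set $T \cup \{c\}$ is sum-free: any sum $x + y$ with $x, y \in T$ lies in $H$ and cannot equal $c$ by the transversal property; $t + c \in gH \setminus T$ for every $t \in T$; and $c + c = 0 \notin T \cup \{c\}$. Extend $T \cup \{c\}$ greedily to a maximal sum-free set $M(T)$. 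For any $y \in gH \setminus T$, exactly one of $\{y, y+c\}$ lies in $T$, namely $x := y + c$, and then the Schur triple $x + c = y$ prevents $y$ from joining $M(T) \supseteq \{x,c\}$; hence $M(T) \cap gH = T$, and distinct transversals produce distinct maximal sum-free sets. Therefore $\fm(G) \geq 2^{n/4}$.

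For the upper bound, invoke the abelian-group container result (properties~(i)--(iii) preceding Proposition~\ref{easyb}) to obtain a family $\cF$ with $|\cF| = 2^{o(n)}$, each $F \in \cF$ of the form $F = B \cup C$ where $B$ is sum-free with $|B| \leq n/2$ and $|C| = o(n)$. Fix $F \in \cF$; a maximal sum-free $M \subseteq F$ decomposes uniquely as $M = M_B \cup M_C$ with $M_B \subseteq B$ and $M_C \subseteq C$ sum-free, so there are at most $2^{|C|} = 2^{o(n)}$ choices for $M_C$, and the abelian analogue of Lemma~\ref{lem-mis} forces $M_B$ to be a maximal independent set of the link graph $L := L_{M_C}[B]$.

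The crucial observation, special to $Z_2^k$, is that $L$ is triangle-free. Indeed, since every element is self-inverse, every Schur triple in $Z_2^k$ has the form $\{x, y, x+y\}$; hence $u, v \in B$ are adjacent in $L$ precisely when $u + v \in M_C$. A triangle $\{u, v, w\}$ in $L$ would place $u+v$, $u+w$ and $v+w$ all in $M_C$, and since $(u+v) + (u+w) = v+w$ this is a Schur triple inside $M_C$, contradicting its sum-freeness. Deleting loops via Fact~\ref{newfact} and applying~\eqref{htnew} give $\mis(L) \leq 2^{|B|/2} \leq 2^{n/4}$; summing over $F \in \cF$ and sum-free $M_C \subseteq C$ then yields
$$\fm(Z_2^k) \leq |\cF| \cdot 2^{|C|} \cdot 2^{n/4} = 2^{(1+o(1))n/4},$$
matching the lower bound. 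I anticipate no serious obstacle: the heart of the argument is the triangle-free claim for $L$, which is immediate from the identity $x = -x$ in $Z_2^k$, and everything else is a routine combination of the container lemma, the Lemma~\ref{lem-mis} reduction, and the Hujter--Tuza bound.
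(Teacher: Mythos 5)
Your proof is correct and follows essentially the same route as the paper: your lower bound via a transversal of the $c$-pairing of $gH$ is the paper's perfect-matching link graph $L_x[U]$ in slightly different language, and your upper bound uses the identical key observation that $x=-x$ in $Z_2^k$ forces the link graph $L_{M_C}[B]$ to be triangle-free, followed by the Hujter--Tuza bound.
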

\begin{proof}
Let $n:=|Z_2^k|$. It is known that $\mu(Z_2^k)=n/2$. We first give a lower bound $\fm(Z_2^k)\ge 2^{n/4}$. Write $Z_2^k=Z_2\otimes Z_2\otimes H$,  where $H:=Z_2^{k-2}$. Let $x:=(0,1,0_H)$ and $U:=\{1\}\otimes Z_2\otimes H$. Notice that the link graph $L_x[U]$ is a perfect matching. Indeed, for any vertex $y=(1,a,h)\in U$, all of its possible neighbours in $U$ are $x+y=(1,1+a,h),$ $x-y=(1,1-a,-h)$ and $y-x=(1,a-1,h)$ and these elements of $Z_2^k$ are identical. To build a collection of sum-free subsets, we first pick $x$ and then pick exactly one of the endpoints of each edge in $L_x[U]$. Since $|U|=n/2$, we obtain $2^{n/4}$ sum-free subsets $S$ in this way. These sets might not be maximal, but no further elements from $U$ can be added into any of these sets. Hence distinct $S$ lie in distinct maximal sum-free subsets. Therefore we have 
$$\fm(Z_2^k)\ge 2^{n/4}.$$

We now proceed with the proof of the upper bound. Let $\cF$ be the family of $2^{o(n)}$ containers defined before Proposition~\ref{easyb}.
It suffices to show that $f_{\rm max}(F) \leq 2^{(1/4+o(1))n}$ for every container $F\in\cF$. Fix a container $F\in\cF$. We have $F=B\cup C$ with $B$ sum-free, $|B|\leq \mu (Z_2^k) =n/2$ and $|C|=o(n)$. Every maximal sum-free subset of $Z_2^k$ in $F$ can be built by choosing a sum-free set $S$ in $C$ and extending $S$ in $B$ to a maximal one. The number of choices for $S$ is at most $2^{|C|}=2^{o(n)}$. For a fixed $S$, let ${ \Gamma}:=L_S[B]$ be the link graph of $S$ on $B$. Then Lemma~\ref{lem-mis} (for abelian groups) implies that the number of extensions is at most $\mis(\Gamma)$. 
%Thus we have
%$f_{\rm max}(F)\le 2^{|C|}\cdot {\rm MIS}(\Gamma)$, and it suffices to show that $\mis(\Gamma)\le 2^{n/4}$. 
Observe that $\Ga$ is triangle-free. 
Indeed, suppose to the contrary that there exists  a triangle on vertices $a,b,c\in B\subseteq Z_2^k$. Since for any $x\in Z_2^k$, $x=-x$, we may assume that $a+b=s_1$, $b+c=s_2$ and $a+c=s_3$ for some $s_1,s_2,s_3\in S$. Furthermore, $s_1,s_2,s_3$ are distinct elements in $S$ since $a,b,c$ are distinct in $B$. Then we have $s_1+s_2=a+2b+c=a+c=s_3$,
contradicting  $S$ being sum-free. Thus by~\eqref{htnew}, we have
$$\mis(\Ga)\le 2^{|B|/2}\le 2^{n/4}$$ 
and so
$$\fm (F) \leq 2^{|C|} \cdot 2^{n/4}=2^{(1/4+o(1))n},$$ as desired.
\end{proof}

 The following construction gives a lower bound $\fm(Z_n)\ge 6^{(1/18-o(1))n}$. Let $n=9k+i$ for some $0\le i\le 8$ and $M:=[3k+1, 6k]$.  Set $\Gamma :=L_{\{k,-2k\}}[M]$. Then $|M|/6-o(n)$ components of $\Gamma$ are  copies of $K_3\square K_2$ as there are at most a constant number of components of $\Gamma$ that are not copies of $K_3\square K_2$.  Observe that $K_3\square K_2$ contains  $6$ maximal independent sets. Thus, ${\rm MIS}(\Gamma)\ge 6^{(1/18-o(1))n} $, yielding the desired lower bound on $\fm (Z_n)$. It is known that $\mu(Z_p)=(1/3+o(1))p$, if $p$ is \emph{prime}, so together with~\eqref{eq-g}, we obtain the following result.
 \begin{prop}\label{zp} If $p$ is prime then
$$1.1^{p-o(p)}\le 6^{(1/18-o(1))p}\le \fm(Z_p)\le 3^{(1/9+o(1))p}\le 1.13^{p+o(p)}.$$
\end{prop}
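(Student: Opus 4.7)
The plan is to establish the two middle inequalities; the outer ones are immediate numerical facts, since $6^{1/18} > 1.1$ (because $(\log_2 6)/18 \approx 0.144 > \log_2 1.1 \approx 0.138$) and $3^{1/9} < 1.13$ (because $(\log_2 3)/9 \approx 0.1761 < \log_2 1.13 \approx 0.1764$).

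The upper bound $\fm(Z_p) \leq 3^{(1/9+o(1))p}$ is immediate from Proposition~\ref{easyb} together with the stated equality $\mu(Z_p) = (1/3+o(1))p$ for prime $p$.

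For the lower bound, I would formalise the construction sketched immediately above the proposition. Write $p = 9k+i$ with $0 \leq i \leq 8$, set $M := [3k+1, 6k] \subseteq Z_p$, and let $\Gamma := L_{\{k,-2k\}}[M]$. The argument has two steps. First, each maximal independent set $I$ of $\Gamma$ extends to a distinct maximal sum-free subset $S_I \supseteq \{k,-2k\} \cup I$ of $Z_p$: maximality of $I$ in $\Gamma$ forces $S_I \cap M = I$, because any $x \in (S_I \cap M) \setminus I$ would carry a $\Gamma$-edge or loop back to $I \cup \{k,-2k\}$, producing a Schur triple inside $S_I$; hence $I \mapsto S_I$ is injective and $\fm(Z_p) \geq \mis(\Gamma)$. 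Second, I would show that, apart from $O(1)$ exceptional vertices, $\Gamma$ decomposes into about $k/2 = (1/18 - o(1))p$ vertex-disjoint copies of $K_3 \square K_2$. Concretely, for each $x$ in a suitable subrange of $[3k+1, 6k]$, pair $x$ with $y := -2k-x \equiv 7k+i-x \pmod{p}$ and define
$$V_x := \{x, \, x+k, \, x+2k, \, y, \, y+k, \, y+2k\}.$$
The triples $\{x, x+k, x+2k\}$ and $\{y, y+k, y+2k\}$ form triangles via the $\pm k$ and $\pm 2k$ shift adjacencies, while the three matching edges $\{x, y\}$, $\{x+k, y+2k\}$, $\{x+2k, y+k\}$ arise from the sum relations $x+y \equiv -2k$ and $(x+k)+(y+2k) = (x+2k)+(y+k) = x+y+3k \equiv k \pmod{p}$. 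Since $K_3 \square K_2$ has exactly $6$ maximal independent sets (one vertex from each triangle, subject to the matching constraint: $3 \cdot 2$), multiplying across the $\sim k/2$ blocks yields $\mis(\Gamma) \geq 6^{(1/18-o(1))p}$.

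The main obstacle is the finite but tedious verification that each $V_x$ really is a component of $\Gamma$ — i.e.\ that no $\Gamma$-edge leaks from $V_x$ to $M \setminus V_x$ — and that only $O(1)$ vertices of $M$ fall outside the union of these blocks. For each $a \in V_x$, this amounts to enumerating the at most six candidate neighbours $a \pm k$, $a \pm 2k$, $k-a$, $-2k-a$ in $Z_p$, checking which lie in $M$, and confirming each belongs to $V_x$. The residual adjustments from $i \neq 0$ in $p = 9k+i$ (a few vertices near the midpoint of $M$ and near its endpoints, plus possible loops coming from solutions of $2x \equiv k$ or $2x \equiv -2k$) affect only $O(1)$ components and are absorbed into the $o(p)$ term.
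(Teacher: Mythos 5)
Your proposal is correct and takes essentially the same approach as the paper: the upper bound is Proposition~\ref{easyb} combined with $\mu(Z_p)=(1/3+o(1))p$, and the lower bound is the link-graph construction $\Gamma = L_{\{k,-2k\}}[M]$ on $M=[3k+1,6k]$ with its $K_3\square K_2$ components, stated just above the proposition. Your explicit block decomposition $V_x=\{x,\,x+k,\,x+2k,\,y,\,y+k,\,y+2k\}$ with $y=-2k-x$ is the correct, complete verification of the component structure that the paper merely asserts.
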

It would be interesting to close the gap in Proposition~\ref{zp}.

We end this section with two more constructions that would match the upper bound in Question~\ref{q2} if it is true. 
For this, we need the following simple fact.
\begin{fact}\label{basicgroup}
Suppose $G$ is an abelian group of odd order. Then given a fixed $x \in G$, there
is a unique solution in $G$ to the equation $2y=x$.
\end{fact}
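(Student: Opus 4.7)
The plan is to exploit the fact that the doubling map on a finite abelian group of odd order is a bijection. Define $\varphi: G \to G$ by $\varphi(y) = 2y$. Since $G$ is abelian, $\varphi$ is a group homomorphism, so it suffices to show $\varphi$ is injective (as $G$ is finite, injectivity will then force surjectivity, which is exactly the existence and uniqueness claim).

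To show $\varphi$ is injective, I would argue that its kernel is trivial: if $2y = 0$ for some $y \in G$, then the order of $y$ divides $2$. But by Lagrange's theorem, the order of $y$ also divides $|G|$, which is odd, so the order of $y$ must be $1$, i.e., $y = 0$. Hence $\ker \varphi = \{0\}$, so $\varphi$ is injective, hence bijective, so for every fixed $x \in G$ there is a unique $y \in G$ with $2y = x$.

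There is really no obstacle here; the statement is essentially Lagrange's theorem applied to the order-$2$ elements. An equally short alternative is to write $|G| = 2k+1$, so that $2(-k x) = -(2k)x = -(|G|-1)x = x$ gives an explicit solution $y = -kx$, and uniqueness follows from the kernel argument above (or directly: if $2y_1 = 2y_2$, then $2(y_1 - y_2) = 0$, forcing $y_1 = y_2$ by the same odd-order reasoning). I would present the homomorphism version since it is the cleanest and matches the abstract setting of the surrounding section.
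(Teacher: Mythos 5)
Your proof is correct. The paper states Fact~\ref{basicgroup} without proof, and your argument --- that the doubling map $y \mapsto 2y$ is a homomorphism with trivial kernel, since any $y$ with $2y=0$ has order dividing both $2$ and the odd number $|G|$ and hence equals $0$, so that injectivity and then bijectivity on the finite group $G$ follow --- is the standard reasoning the authors have in mind.
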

Notice that Fact~\ref{basicgroup} is false for abelian groups of even order.

\begin{prop}
 Suppose that $3|n$ where $n$ is not divisible by a prime $p$ with $p \equiv 2 \mod \, 3$. Then $f_{\max}(G)\ge 2^{(n-9)/6}=2^{(\mu(G)-3)/2}$.
\end{prop}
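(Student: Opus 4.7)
The plan is to adapt the Cameron--Erd\H{o}s pairing construction to the abelian setting. First note that since $2 \equiv 2 \pmod{3}$ cannot divide $n$, the order $n$ is odd, so Fact~\ref{basicgroup} applies and the doubling map $y \mapsto 2y$ is a bijection on $G$. Since $3 \mid n$, there is a surjection $G \twoheadrightarrow Z_3$ (the $3$-primary component of $G$ is non-trivial by the structure theorem); write $K$ for its kernel and $G = K \sqcup (g+K) \sqcup (2g+K)$, where $g+K$ generates $G/K$. The coset $A := g+K$ is sum-free of size $n/3$, since its pairwise sums all lie in $2g+K$, which is disjoint from $A$. Under the hypothesis, $\mu(G)=n/3$ by Green--Ruzsa, so the two displayed expressions in the proposition agree.

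Next I fix an arbitrary $x \in 2g+K$. Fact~\ref{basicgroup} produces a unique $y_0 \in G$ with $2y_0 = x$, and reducing modulo $K$ (where $2 \cdot 2 \equiv 1 \pmod{3}$) forces $y_0 \in A$. The involution $\sigma : A \to A$ defined by $\sigma(y) = x - y$ is well-defined because $(2g+K) - (g+K) = g+K$, has $y_0$ as its unique fixed point, and so partitions $A \setminus \{y_0\}$ into $(n/3-1)/2 = (n-3)/6$ pairs $\{y, x-y\}$ (an integer because $n$ is odd). Using $4g \in g+K$ and $-2g \in g+K$, both $2x$ and $-x$ lie in $A$, and either $3x = 0$ (in which case $2x = -x = y_0$) or $\{2x, -x\}$ is one of the $\sigma$-pairs distinct from $\{y_0\}$.

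I then form candidate sets $T = S \cup \{x\}$ by requiring $y_0 \notin S$ and $2x \notin S$ (which, when $3x \neq 0$, forces $-x \in S$ in that pair), and by choosing exactly one element from each remaining $\sigma$-pair. In either case at least $(n-9)/6$ pairs are freely choosable, giving at least $2^{(n-9)/6}$ candidate sets $S$. Sum-freeness of $T$ is a routine check: sums within $S \subseteq A$ land in $2g+K$, so they cannot equal elements of $S$, and they equal $x$ only when $y_1, y_2$ form a $\sigma$-pair, which is excluded by construction; an equation $x + y_1 = y_2$ would force $y_2 - y_1 = x$ to lie in both $K$ and $2g+K$, a contradiction; and $x+x = y \in S$ is ruled out by $2x \notin S$.

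The only step that genuinely requires attention is verifying that distinct $S$ yield distinct maximal sum-free extensions. I would show that for every maximal sum-free $M \supseteq T$, one has $M \cap A = S$: any $z \in A \setminus S$ creates a Schur triple with $T$, since $z = y_0$ gives $z + z = 2y_0 = x \in T$; $z = 2x$ (when $\neq y_0$) gives $x + x = z$; and if $z$ is the $\sigma$-partner of the chosen $y \in S$ in its free pair then $y + z = x \in T$. Consequently the map $S \mapsto M \cap A$ is injective on the family, and I conclude $\fm(G) \geq 2^{(n-9)/6}$. The edge case $3x = 0$ is absorbed harmlessly: the two forbidden elements $y_0$ and $2x$ coincide, and all $(n-3)/6$ pairs remain free, comfortably beating the required bound.
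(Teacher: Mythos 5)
Your proof is correct and takes essentially the same approach as the paper. The paper works through the link graph $L_x[1+H]$ with $x\in 2+H$ and observes that it decomposes into a matching of size $(n-3)/6$ plus an isolated looped vertex (with at most one further loop on the matching), then reads off ${\rm MIS}\geq 2^{(n-9)/6}$; your involution $\sigma(y)=x-y$ is exactly that matching, your forbidden elements $y_0$ and $2x$ are exactly the looped vertices, and your distinctness argument is the content of Lemma~\ref{lem-mis} spelled out by hand.
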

\proof
First note that $\mu(G)=n/3$ for such groups (see~\cite{GR-g}).
Let $H\le G$ be a subgroup of index 3. Then there are three cosets $0+H, 1+H, 2+H$. Pick some $x\in 2+H$. Then consider the link graph $\Gamma :=L_x[1+H]$ on $n/3$ vertices.
There is a loop at $2x \in V(\Gamma)$. 
For every $y\in 1+H$, $x+y\in 0+H$, $y-x\in 2+H$ and $x-y \in 1+H$. So $y$ has only one neighbour $x-y$ in $1+H$ (unless $y=2x$, which has a loop). By Fact~\ref{basicgroup}, there is a unique $y \in 1+H$ such that $x-y=y$. Overall this implies that $\Gamma$ consists  of the disjoint union of a matching $M$ of size $(n-3)/6$, with a loop at at most one of the vertices in $M$, together with an additional vertex with a loop.
%\COMMENT{The two loops described could actually be at the same vertex. e.g. when $G=Z_3 ^k$} 
Clearly ${\rm MIS} (\Gamma) \geq 2^{(n-9)/6}$ and so
$f_{\max}(G)\ge 2^{(n-9)/6}$.
\endproof

%\COMMENT{This example is not correct in general. It could be the case that the neighbour $x-y$ of $y$ is a actually itself i.e. if $2x=y$. For example, if group is $Z_2 \times Z_2 \times H$, $x=(2,0,2h)$, $y=(1,0,h)$, $y'=(1,1,h)$, then get two loops. Also always a loop at $2x$.}

\begin{prop} Let $G=Z_7^k$. Then $f_{\max}(G)\ge 2^{n/7-1}=2^{\mu(G)/2-1}$. 
\end{prop}
\proof
First note that $\mu(G)=2n/7$ for such groups (see~\cite{GR-g}).
Let $H\le G$ be a subgroup of index 7. Then pick some $x\in 1+H$. Consider the link graph $\Gamma :=L_x[(2+H)\cup ( 3+H)]$ on $2n/7$ vertices. There is a loop at $2x \in 2+H$ in $\Gamma$.
The remaining edges of $\Gamma$ form a 
 perfect matching between $2+H$ and $3+H$. Therefore ${\rm MIS} (\Gamma) = 2^{n/7-1}$ and so
$f_{\max}(G)\ge 2^{n/7-1}$.
\endproof

We conclude the section with two conjectures.
\begin{conjecture}\label{conjnew1}
For every abelian group $G$ of order $n$,
$$2^{n/7} \leq f_{\max} (G) \leq 2^{n/4+o(n)},$$
where the bounds, if true, are best possible.
\end{conjecture}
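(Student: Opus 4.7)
My proposal would attack the upper and lower halves of Conjecture~\ref{conjnew1} by separate strategies; the upper bound is the ambitious half and the lower bound is essentially a casework exercise guided by Green--Ruzsa.

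For the \textbf{lower bound} $f_{\max}(G)\geq 2^{n/7}$, I would split on the Green--Ruzsa classification of $\mu(G)$. When $G$ has a prime divisor $p\equiv 2\pmod 3$, one has $\mu(G)\geq n/3$, and the coset/matching trick behind the proposition for $3\mid n$ should yield $2^{(n-O(1))/6}\gg 2^{n/7}$ after tracking how a subgroup of index $3$ sits inside $G$ (if $3\nmid n$ one can still use a coset of a suitable subgroup with $2(x+H)\cap(x+H)=\emptyset$ and obtain a matching of size $\Theta(n)$). When $G$ has only primes $\equiv 1\pmod 3$, $\mu(G)$ is controlled by the exponent; in the extremal case of exponent $7$ the proposition already gives $2^{n/7-1}$, and for larger exponents the matching coset construction again beats $2^{n/7}$. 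The plan is thus to unify these into one lemma: for every abelian $G$ of order $n$, there exist a subgroup $H$ and a coset $x+H$ such that the link graph $L_x[x+H]$ contains an induced matching of size at least $n/7+O(1)$, from which the lower bound is immediate (every choice of one endpoint per edge extends to a distinct maximal sum-free set).

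For the \textbf{upper bound} $f_{\max}(G)\leq 2^{n/4+o(n)}$, I would follow the container framework already set up before Proposition~\ref{easyb}. Fix a container $F=B\cup C$ with $B$ sum-free of size at most $\mu(G)\leq n/2$ and $|C|=o(n)$; by Lemma~\ref{lem-mis} (and its trivial extension to groups), it suffices to show
\[
\mathrm{MIS}\bigl(L_S[B]\bigr)\leq 2^{|B|/2+o(n)}\leq 2^{n/4+o(n)}
\]
for every sum-free $S\subseteq C$. The Hujter--Tuza bound~\eqref{htnew} would finish the job if $L_S[B]$ were triangle-free, as in the $Z_2^k$ case treated in Proposition~\ref{thm-z2}. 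So the core of the plan is a structural dichotomy: either (i) $L_S[B]$ has few triangles, in which case a small deletion plus Corollary~\ref{use} (adapted to groups) reduces to the triangle-free bound with negligible loss; or (ii) $L_S[B]$ has many triangles, which I would argue forces $B$ to have algebraic structure (each triangle on $a,b,c\in B$ gives $s_1+s_2+s_3=2(a+b+c)$ for some $s_i\in S$, and iterating this should force $B$ to live inside a proper coset structure of a subgroup). In case (ii), one invokes the Green--Ruzsa structural theorem for almost-extremal sum-free sets in abelian groups to replace $B$ with an arithmetically structured set (a union of cosets, roughly) and counts maximal independent sets directly using that structure, mirroring how we handled the odd-number container and the interval container in the integer proof.

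The \textbf{main obstacle}, which I expect to be genuinely hard, is making case (ii) quantitative. In the integers we had three explicit near-extremal container shapes from Deshouillers--Freiman--S\'os--Temkin, and for each one could reason combinatorially about cycles, paths and matchings in the link graph. In an arbitrary abelian group the Green--Ruzsa classification produces many different kinds of extremal sum-free sets (depending on the divisors of $n$ mod $3$ and on the exponent), and each kind will demand its own link-graph analysis. In particular, for odd-order groups without elements of order $2$ or $7$, the natural analogue of the ``triangle-free link graph'' argument can fail badly because $L_S[B]$ is typically not triangle-free, and I expect one will need an analogue of the Green--Morris bound on sets with small sumset in arbitrary abelian groups in order to prevent over-counting of the choices for $S$ inside $C$. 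This last step --- bounding the number of small-doubling sets in a general abelian group with the right dependence on $\mu(G)$ --- is where I expect the proof to stall and where new input (beyond the techniques used for the integers) is most likely required.
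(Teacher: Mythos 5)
Be aware that the statement you are attacking is a \emph{conjecture}: the paper does not prove it, and indeed writes ``the bounds, if true, are best possible.'' There is therefore no proof in the paper to compare against; what can be said is whether your plan has a gap, and it does. On the lower-bound side, you propose to unify everything through a lemma asserting that every abelian group $G$ of order $n$ admits a subgroup $H$ and a coset $x+H$ for which $L_x[x+H]$ contains an induced matching of size $n/7+O(1)$. This cannot hold when $G=Z_p$ with $p$ prime, since $Z_p$ has no proper nontrivial subgroup. The paper's own Proposition~\ref{zp} handles $Z_p$ by an entirely different construction: the link of a \emph{two}-element set $\{k,-2k\}$ on an interval $[3k+1,6k]$, whose components are copies of $K_3\square K_2$, each contributing $6$ (not $2$) maximal independent sets. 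The resulting bound $6^{(1/18-o(1))p}$ only barely exceeds $2^{p/7}$; there is no room for the matching-in-a-coset strategy to deliver $2^{n/7}$ here, and the unified lemma you want is simply false for prime cyclic groups.

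On the upper-bound side, note that the target is narrower than your write-up suggests. Since $3^{\mu(G)/3}\le 2^{n/4}$ whenever $\mu(G)\le\frac{3\ln 2}{4\ln 3}n\approx 0.473n$, and by Diananda--Yap/Green--Ruzsa $\mu(G)>0.473n$ only when $|G|$ is even (where $\mu(G)=n/2$), Proposition~\ref{easyb} already proves the conjectured upper bound for all odd-order groups. So the whole difficulty is the even-order case, where $|B|$ may be close to $n/2$ and you really need $\mathrm{MIS}(L_S[B])\le 2^{|B|/2+o(n)}$, i.e.~a positive answer to Question~\ref{q2}. Your triangle dichotomy is the right instinct, but the identity you cite, $s_1+s_2+s_3=2(a+b+c)$, corresponds to only one of several possible edge labelings of a triangle $\{a,b,c\}$ in $L_S[B]$; the triangle-freeness used in Proposition~\ref{thm-z2} hinges on $2b=0$ in $Z_2^k$, and for a general even-order (let alone odd-order) group there is no forced algebraic consequence. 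Nothing in the sketch shows how ``many triangles'' yields coset structure, and that step is exactly the open problem the paper poses rather than resolves.
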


We also suspect that there is an infinite class of finite abelian groups for which the upper bounds in Conjecture~\ref{conjnew1} and Question~\ref{q2} are far from tight.
\begin{conjecture}\label{conjnew2}
There is a sequence of finite abelian groups $\{G_i\}$ of increasing order such that for all $i$,
$$f_{\max} (G_i) \leq 2^{\mu (G_i) /2.01}.$$
\end{conjecture}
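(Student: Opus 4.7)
The plan is to prove the conjecture by taking $G_i := Z_{p_i}$ where $p_i$ is the $i$-th prime. Recall that $\mu(Z_p) = (1/3+o(1))p$ and Proposition~\ref{zp} already yields $\fm(Z_p) \geq 6^{(1/18-o(1))p}$ from link graphs whose components are copies of $K_3 \square K_2$ (each contributing exactly $6$ maximal independent sets). Since $(\log_2 6)/18 \approx 0.144 < 1/6.03 = 1/(3 \cdot 2.01)$, a matching upper bound $\fm(Z_p) \leq 6^{(1/18+o(1))p}$ would immediately give $\fm(Z_p) \leq 2^{\mu(Z_p)/2.01}$ for all sufficiently large $p$, establishing Conjecture~\ref{conjnew2}.

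First I would apply the container and removal lemmas for abelian groups (as used in the proof of Proposition~\ref{easyb}) to reduce to bounding $\fm(F)$ for each of $2^{o(p)}$ containers $F = B \cup C$, where $B$ is sum-free with $|B| \leq \mu(Z_p)$ and $|C| = o(p)$. Invoking a stability theorem for sum-free sets in $Z_p$ (of Freiman/Lev/Deshouillers--Lev type), one may further assume $|B| = (1/3-o(1))p$ and that, after a dilation by some $\lambda \in Z_p^\times$, $B$ is a subset of the middle third $(p/3, 2p/3) \bmod p$ missing only $o(p)$ elements; the remaining ``non-extremal'' containers have $|B| \leq \mu(Z_p)(1 - \Omega(1))$ and contribute at most $3^{|B|/3} \ll 6^{p/18}$ maximal sum-free sets each, via Lemma~\ref{lem-mis} and Moon--Moser.

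For each such extremal container $F$, and each sum-free $S \subseteq C$ (of which there are at most $2^{|C|} = 2^{o(p)}$), the aim is to show the structural claim that $\Gamma := L_S[B]$ decomposes, up to $o(p)$ vertices and edges, as a disjoint union of copies of $K_3 \square K_2$ together with a few isolated edges and vertices. Since $\mis(K_3 \square K_2) = 6$, a component-wise bound then gives $\mis(\Gamma) \leq 6^{|B|/6 + o(p)}$. Summing over containers and choices of $S$ would yield $\fm(Z_p) \leq 6^{p/18 + o(p)}$, as required.

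The main obstacle is the structural claim on $\Gamma$. Each $s \in S$ contributes partial matchings to $\Gamma$ coming from the equations $x+y = s$ and $x - y = \pm s$ with $x, y \in B$, and $\Gamma$ is the union of these matchings over $s \in S$. Forcing the $K_3 \square K_2$ decomposition requires the arithmetic rigidity of $S$ (as a sum-free subset of $Z_p$) combined with the middle-third structure of $B$. A plausible route is to develop a sumset-theoretic stability result for small sum-free $S \subseteq Z_p$ showing that $S$ is essentially an arithmetic progression of the form $\{k, -2k, \ldots\}$ --- exactly the configuration producing the $K_3 \square K_2$ components in Proposition~\ref{zp} --- and then to perform a case analysis on how such a progression interacts with the dilated middle-third $B$. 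If the clean decomposition fails for some exceptional $S$, the contribution of these $S$ should be controlled by direct combinatorial counting, exploiting the $2^{o(p)}$ slack coming from $|C| = o(p)$ and $|\cF| = 2^{o(p)}$.
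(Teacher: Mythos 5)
This statement is labelled a \emph{conjecture} in the paper; there is no proof of it given, so there is nothing to compare your attempt against line by line. What the paper does offer is Proposition~\ref{zp}, which establishes $6^{(1/18-o(1))p}\le \fm(Z_p)\le 3^{(1/9+o(1))p}$ and explicitly states that closing this gap is open. Your strategy is precisely to close that gap at the lower end by proving $\fm(Z_p)\le 6^{(1/18+o(1))p}$, and your numerics are right: $\log_2(6)/18\approx 0.1436 < 1/6.03$, so such a bound would indeed imply $\fm(Z_p)\le 2^{\mu(Z_p)/2.01}$ for large primes $p$. But this means your "proof" of Conjecture~\ref{conjnew2} is logically stronger than what the authors were prepared to claim, and it rests on exactly the piece they flagged as unknown.

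The concrete gap is the structural claim on $\Gamma=L_S[B]$, which you yourself call "the main obstacle." You need $\mis(\Gamma)\le 6^{|B|/6+o(p)}$ for \emph{every} sum-free $S\subseteq C$, not just for the special $S=\{k,-2k\}$ used in the lower-bound construction. Several things have to be ruled out and none is routine. First, unlike the integer case (Lemma~\ref{free}), $L_S[B]$ in $Z_p$ can contain triangles even when $B$ sits in a dilated middle third, since both sum-edges ($x+y\in S$) and difference-edges ($x-y\in\pm S$) occur there; a triangle-dense link graph could push $\mis$ up toward the Moon--Moser rate $3^{1/3}\approx 1.442$, well above $6^{1/6}\approx 1.348$. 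Second, the reduction to $S$ "essentially an arithmetic progression of the form $\{k,-2k,\ldots\}$" is not a consequence of any stability theorem you cite; $\{k,-2k\}$ is a two-element set, and a general sum-free $S$ of size $o(p)$ in $Z_p$ has no such rigid shape. Third, the fallback plan of handling exceptional $S$ by "direct combinatorial counting, exploiting the $2^{o(p)}$ slack" is not an argument: the slack from containers only absorbs subexponential factors, whereas a failure of the $K_3\square K_2$ structure could cost an exponential factor in $p$. Until the upper bound in Proposition~\ref{zp} is actually improved below $2^{\mu(Z_p)/2.01}$, your proposal is a plausible plan of attack but not a proof.
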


%%%%%%%%%%%%%%%%%%%%%%%%%%%%%%%%%%%%%%%%%%%%%%%%%%%%%%%%%%%%%%%%
%%%%%%%%%%%%%%%%%%%%%%%%%%%%%%%%

\section*{Acknowledgements}
The authors are grateful to the BRIDGE strategic alliance between the University of Birmingham and the University of Illinois at Urbana-Champaign. This research was conducted as part of the `Building Bridges in Mathematics' BRIDGE Seed Fund project.

The authors are also grateful to the referees for their careful reviews.

  \begin{appendix}
 \section{Appendix}

   \setcounter{theorem}{0}
    \renewcommand{\thelemma}{\Alph{section}\arabic{theorem}}
		
Here we give the proofs of Lemma~\ref{dense} and Corollary~\ref{use}. 
The following simple facts will be used in the proof of Lemma~\ref{dense}.

\begin{fact}\label{fact1} Suppose that $G$ is a graph. For any maximal independent set $I$ in $G$ that contains $x$, $I\setminus \{x\}$ is a maximal independent set in $G\setminus (N_G(x)\cup \{x\})$.
\end{fact}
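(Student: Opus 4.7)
The plan is a direct two-part verification: show that $I\setminus\{x\}$ is independent in $H:=G\setminus(N_G(x)\cup\{x\})$, and then show it is maximal in $H$.

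For independence, I would first observe that $I\setminus\{x\}$ is a subset of $V(H)$. Indeed, since $I$ is independent in $G$ and contains $x$, no vertex of $I$ lies in $N_G(x)$, so $I\setminus\{x\} \subseteq V(G)\setminus(N_G(x)\cup\{x\}) = V(H)$. Independence of $I\setminus\{x\}$ in $H$ is then inherited from independence of $I$ in $G$, since $H$ is an induced subgraph of $G$.

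For maximality, I would take an arbitrary vertex $y \in V(H)\setminus(I\setminus\{x\})$ and produce a neighbor of $y$ inside $I\setminus\{x\}$. Since $y \in V(H)$ we have $y \ne x$, so $y \notin I$. By maximality of $I$ in $G$, there exists $z \in I$ with $yz \in E(G)$. Since $y \notin N_G(x)$, the vertex $z$ cannot equal $x$, so $z \in I\setminus\{x\} \subseteq V(H)$, and the edge $yz$ survives in the induced subgraph $H$. This exhibits the required neighbor and completes the verification.

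There is no real obstacle here; the statement is a routine consequence of the definitions, and the only thing to be careful about is ensuring that any vertex witnessing non-extendability in $G$ actually lies in $V(H)$, which is automatic because $I\setminus\{x\}$ already avoids $N_G(x)\cup\{x\}$.
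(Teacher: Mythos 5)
Your proof is correct and is the natural direct verification; the paper states Fact~A.1 without proof, treating it as routine, and your two-part argument (independence inherited from the induced subgraph, maximality via the observation that the witnessing vertex $z$ cannot be $x$ because $y\notin N_G(x)$) is exactly the argument one would supply. The point you flag as the only subtlety --- that $I\setminus\{x\}$ really does live in $V(G)\setminus(N_G(x)\cup\{x\})$ because $I$ is independent and contains $x$ --- is indeed the one place where something needs to be said, and you handle it correctly.
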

Given $x\in V(G)$, let ${\rm MIS}_G (x)$ denote the number of maximal independent sets in $G$ that contain $x$. 

\begin{fact}\label{fact2} Suppose that $G$ is a graph. Given any $x \in V(G)$,
$${\rm MIS}(G) \leq {\rm MIS}_G (x) + \sum _{v \in N_G(x)} {\rm MIS} _G (v).$$
\end{fact}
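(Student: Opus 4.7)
The plan is to partition the collection of maximal independent sets in $G$ according to whether they contain the fixed vertex $x$ or not, and then bound each part separately. Those that contain $x$ contribute exactly ${\rm MIS}_G(x)$ to the count, so they match the first term on the right-hand side.

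For the remaining maximal independent sets, the key observation is that if $I$ is a maximal independent set of $G$ with $x \notin I$, then by maximality $I \cup \{x\}$ cannot be independent, so $I$ must contain at least one neighbor of $x$. Thus every such $I$ is counted in the family $\{I : v \in I\}$ for some $v \in N_G(x)$, and hence $I$ contributes to ${\rm MIS}_G(v)$ for at least one $v \in N_G(x)$. Summing over all $v \in N_G(x)$ therefore gives an upper bound (not necessarily tight, since $I$ may contain several neighbors of $x$ and be counted multiple times) on the number of maximal independent sets of $G$ that avoid $x$.

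Adding the two contributions yields the claimed inequality. There is essentially no obstacle here; the statement is an immediate double-counting argument using the defining property of maximality (the inability to extend $I$ by $x$ forces $I$ to meet $N_G(x)$). The only subtlety worth mentioning is that the inequality is generally strict because neighbors of $x$ lying in the same maximal independent set cause overcounting in the sum, but this is irrelevant for the stated upper bound.
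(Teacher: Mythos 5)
Your argument is correct and is the natural double-counting proof: split maximal independent sets by whether they contain $x$, and use maximality to force any $I$ omitting $x$ to meet $N_G(x)$. The paper states this as a Fact without proof (only remarking that it can fail in the presence of loops), so there is nothing to compare; your reasoning is exactly what is intended.
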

Notice that Fact~\ref{fact2} is not true in general if $G$ is a graph with loops.

\begin{lemma}[F\"uredi~\cite{fur}]\label{fact3} For $m \geq 6$,
${\rm MIS} (C_m)={\rm MIS}(C_{m-2}) +{\rm MIS} (C_{m-3})$.
\end{lemma}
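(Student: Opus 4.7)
My plan is to reduce the cycle identity to the analogous recursion for paths and then finish with a short algebraic manipulation. Throughout, I will write $p_k:={\rm MIS}(P_k)$ and $c_k:={\rm MIS}(C_k)$ (with the convention $p_0:=1$).

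First, I would establish the path recursion $p_k=p_{k-2}+p_{k-3}$ (for $k\ge 4$) by a two-case split on a leaf $u$ of $P_k$. Either $u\in I$, in which case its unique neighbour is excluded from $I$ and already dominated by $u$, so $I\setminus\{u\}$ is an MIS of the induced $P_{k-2}$; or $u\notin I$, in which case $u$'s unique neighbour must lie in $I$ (to dominate $u$), which in turn excludes and dominates its other neighbour, so what remains of $I$ is an MIS of the induced $P_{k-3}$.

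Next I would derive the closed form $c_m=2p_{m-3}+p_{m-6}$ for $m\ge 6$. Fix a vertex $v_1\in V(C_m)$ and split into three disjoint, exhaustive cases based on whether $v_1$ and its cyclic neighbour $v_m$ lie in $I$ (the case $v_1,v_m\in I$ is forbidden by independence):
\begin{itemize}
\item[(i)] $v_1\in I$: then $v_2,v_m\notin I$ and both are dominated by $v_1$, so $I\setminus\{v_1\}$ is an MIS of the induced path $v_3\cdots v_{m-1}\cong P_{m-3}$; this gives $p_{m-3}$ options.
\item[(ii)] $v_1\notin I$ and $v_m\in I$: then $v_{m-1}\notin I$ and $v_1$ is already dominated by $v_m$, and $I\setminus\{v_m\}$ is an MIS of the induced path $v_2\cdots v_{m-2}\cong P_{m-3}$; the boundary vertices $v_2,v_{m-2}$ inherit exactly the path MIS condition because their cyclic-only neighbours $v_1,v_{m-1}$ are not in $I$. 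This gives $p_{m-3}$ options.
\item[(iii)] $v_1,v_m\notin I$: domination of $v_1$ and $v_m$ forces $v_2,v_{m-1}\in I$, which in turn excludes $v_3,v_{m-2}$ from $I$, and both of these are already dominated. A similar boundary check shows $I\cap\{v_4,\dots,v_{m-3}\}$ is an MIS of $P_{m-6}$, giving $p_{m-6}$ options.
\end{itemize}
Summing, $c_m=2p_{m-3}+p_{m-6}$.

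Finally, two applications of the path recursion yield
\[
c_{m-2}+c_{m-3}=(2p_{m-5}+p_{m-8})+(2p_{m-6}+p_{m-9})=2(p_{m-5}+p_{m-6})+(p_{m-8}+p_{m-9})=2p_{m-3}+p_{m-6}=c_m,
\]
which is the desired identity; the finitely many small values of $m$ for which some index falls below the range of validity of the path recursion are dealt with by direct computation. The main obstacle I anticipate is the bookkeeping in the cyclic case split: in each case one must carefully verify that the surviving induced subpath inherits exactly the MIS conditions of a standalone path, i.e.\ that the cyclic boundary vertices outside the subpath are either in $I$ (providing domination across the ``cut'') or are themselves dominated by in-$I$ vertices, so that no extra constraint is imposed on the endpoints of the subpath. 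Once the closed form is in hand, the final recursion drops out in two lines.
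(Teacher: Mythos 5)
The paper cites this lemma from F\"uredi~\cite{fur} without giving a proof, so there is no in-paper argument to compare against; I will assess your proof on its own terms. It is correct. The path recursion $p_k=p_{k-2}+p_{k-3}$ (via the leaf-in/leaf-out split) is standard and sound, and the three-way case split on $(v_1,v_m)$ for the cycle is exhaustive and disjoint, with the boundary checks you flag indeed working out: in each case the surviving induced subpath inherits exactly the standalone-path maximality condition because the deleted cyclic neighbours of its endpoints are either in $I$ or already dominated. The closed form $c_m=2p_{m-3}+p_{m-6}$ therefore holds for all $m\ge 6$ with $p_0=1$, and the final algebra, which uses $p_{m-3}=p_{m-5}+p_{m-6}$ and $p_{m-6}=p_{m-8}+p_{m-9}$, is valid once $m\ge 10$.

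The one thing you should make explicit rather than wave at is the base case verification: the algebraic step requires $m\ge 10$, so $m=6,7,8,9$ must be checked by hand. These are easy ($c_3,\dots,c_9=3,2,5,5,7,10,12$, the Perrin numbers), and indeed $c_6=c_4+c_3$, $c_7=c_5+c_4$, $c_8=c_6+c_5$, $c_9=c_7+c_6$; but stating the numbers closes the argument. As a side remark, one can also prove the identity without passing through paths by doing the same three-way split directly on both $C_m$ and $C_{m-2},C_{m-3}$ and matching cases, or by noting the sequence is Perrin's; your route via the closed form is a clean and entirely valid alternative.
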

Lemma~\ref{fact3} implies the following simple result.
\begin{lemma}\label{easylem}
For all $m \geq 4$, ${\rm MIS} (C_m) < 2^{0.49m}$.
\end{lemma}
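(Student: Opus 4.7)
The plan is to prove the bound by strong induction on $m$, using the recurrence from Lemma~\ref{fact3} together with a short numerical check.

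First I would settle the base cases $m \in \{4,5,6\}$ by direct computation. One checks easily that ${\rm MIS}(C_4)=2$, ${\rm MIS}(C_5)=5$ and (using Lemma~\ref{fact3} together with ${\rm MIS}(C_3)=3$) that ${\rm MIS}(C_6)=5$. Since $2^{0.49\cdot 4}>3.8$, $2^{0.49\cdot 5}>5.4$ and $2^{0.49\cdot 6}>7.6$, all three base cases satisfy the required inequality.

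For the inductive step, fix $m\geq 7$ and assume that ${\rm MIS}(C_k)<2^{0.49k}$ for every $4\leq k<m$. Since $m\geq 7$, both $m-2\geq 5\geq 4$ and $m-3\geq 4$, so the inductive hypothesis applies to $C_{m-2}$ and $C_{m-3}$. Applying Lemma~\ref{fact3}, we get
\[
{\rm MIS}(C_m)\;=\;{\rm MIS}(C_{m-2})+{\rm MIS}(C_{m-3})\;<\;2^{0.49(m-2)}+2^{0.49(m-3)}\;=\;2^{0.49m}\bigl(2^{-0.98}+2^{-1.47}\bigr).
\]
The claim then reduces to the numerical inequality $2^{-0.98}+2^{-1.47}<1$, which holds since $2^{-0.98}<0.508$ and $2^{-1.47}<0.362$, and $0.508+0.362=0.870<1$.

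There is no real obstacle here; the only thing to be careful about is that the recurrence in Lemma~\ref{fact3} reaches down to $C_{m-3}$, so one must include $m=6$ as a base case (because $C_3$ falls outside the range $m\geq 4$ of the statement), and then induction can start at $m=7$. Once the base cases and the inequality $2^{-0.98}+2^{-1.47}<1$ are in place, the result follows immediately.
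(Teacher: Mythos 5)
Your proof is correct and follows essentially the same approach as the paper's: verify the base cases $m\in\{4,5,6\}$ directly, then apply Lemma~\ref{fact3} inductively for $m\geq 7$ and reduce to the numerical check $2^{-0.98}+2^{-1.47}<1$. The only difference is that you spell out the base case values and the reason $m=6$ must be included, which the paper leaves implicit.
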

\proof
It is easy to check that the lemma holds for $m=4,5,6$.
For $m \geq 7$, by induction, Lemma~\ref{fact3} implies that 
$${\rm MIS}(C_m)={\rm MIS}(C_{m-2}) +{\rm MIS} (C_{m-3}) < 2^{0.49m} (2^{-0.98}+2^{-1.47}) <2^{0.49m}.$$
\endproof

\begin{cor}\label{cycle}
If $G$ is the vertex-disjoint union of cycles of length at least $4$ then ${\rm MIS}(G) < 2^{0.49|G|}$.
\end{cor}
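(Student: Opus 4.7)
The plan is to use the multiplicativity of $\mathrm{MIS}$ over connected components, combined with the per-cycle bound $\mathrm{MIS}(C_m)<2^{0.49m}$ established in Lemma~\ref{easylem}.

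First I would recall the elementary fact that a maximal independent set in a disjoint union of graphs is exactly the disjoint union of maximal independent sets in each component, one per component. Consequently, if $G$ is the vertex-disjoint union of graphs $H_1,\dots,H_k$, then
\[
\mathrm{MIS}(G)=\prod_{i=1}^{k}\mathrm{MIS}(H_i).
\]

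Now, by assumption each $H_i$ is a cycle $C_{m_i}$ with $m_i\geq 4$, so Lemma~\ref{easylem} gives $\mathrm{MIS}(C_{m_i})<2^{0.49 m_i}$ for every $i$. Since the $C_{m_i}$'s partition $V(G)$, we have $\sum_{i=1}^{k} m_i=|G|$, and hence
\[
\mathrm{MIS}(G)=\prod_{i=1}^{k}\mathrm{MIS}(C_{m_i})<\prod_{i=1}^{k}2^{0.49 m_i}=2^{0.49\sum_{i} m_i}=2^{0.49|G|},
\]
which is exactly the desired bound. (Strictness follows because there is at least one cycle component, and for that component the inequality from Lemma~\ref{easylem} is already strict.)

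There is no real obstacle here; the only subtlety is to note that $\mathrm{MIS}$ factors multiplicatively across connected components, which is immediate since independence and maximality are both determined component-wise when there are no edges between components. The corollary therefore reduces entirely to Lemma~\ref{easylem} applied to each cycle.
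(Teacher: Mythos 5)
Your proof is correct and follows exactly the argument the paper intends (the corollary is stated without proof, as an immediate consequence of Lemma~\ref{easylem}): multiplicativity of $\mathrm{MIS}$ over connected components plus the per-cycle bound. Nothing to add.
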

We now combine the previous results to prove Lemma~\ref{dense}.

\noindent
{\bf Proof of Lemma~\ref{dense}.}
We proceed by induction on $n$. The case when $n \leq 4$ is an easy calculation. We split the argument into several cases.

\noindent
\textbf{Case 1:} There is a vertex $x\in V(G)$ of degree $0$.\\
By induction $G':=G\setminus \{x\}$ is such that ${\rm MIS}(G')\leq 2^{(n-1)/2-k/(100D^2)}$ and clearly ${\rm MIS}(G)={\rm MIS}(G')$.

\smallskip

\noindent 
\textbf{Case 2:} There is a vertex $x\in V(G)$ of degree $1$.\\
First suppose that $x$ is adjacent to a vertex $y$ of degree $1$. Then consider $G':=G\setminus\{x,y\}$. Note that ${\rm MIS}(G)=2 \cdot {\rm MIS} (G')$. Further,
$|G'|=n-2$, $e(G') \geq (n-2)/2+k$ and $\Delta (G') \leq D$. Thus,  by induction we have that
$${\rm MIS} (G)=2 \cdot {\rm MIS}(G') \leq 2 \times 2^{(n-2)/2-k/(100D^2)} = 2^{n/2-k/(100D^2)},$$
as desired.

Otherwise $x$ is adjacent to a vertex $y$ of degree $d\geq 2$. Consider $G':=G\setminus \{x,y\}$. So $|G'|=n-2$, $e(G') \geq (n-2)/2+k-d+1$ and $\Delta (G') \leq D$. Therefore by induction and Fact~\ref{fact1},
\begin{align}\label{eq1}
{\rm MIS} _G (x) \leq {\rm MIS} (G') \leq 2^{(n-2)/2 -(k-d+1)/(100D^2)} \leq 2^{n/2-k/(100D^2)} (2^{-1+d/(100D^2)}).
\end{align}
Consider $G'':=G\setminus (N_G(y) \cup \{y\})$. So $|G''|=n-d-1$, $e(G'') \geq n/2+k-(d-1)D-1 \geq (n-d-1)/2+(k-(d-1)D)$ and $\Delta (G'') \leq D$. 
Thus, by induction and Fact~\ref{fact1},
\begin{align}
{\rm MIS} _G (y) \leq {\rm MIS} (G'') & \leq 2^{(n-d-1)/2 -(k-(d-1)D)/(100D^2)} \nonumber \\ & =
2^{n/2-k/(100D^2)} (2^{-(d+1)/2+(d-1)/100D}).\label{eq2}
\end{align}
Now as $2 \leq d\leq D$ we have that
$$ 2^{-1+d/(100D^2)} + 2^{-(d+1)/2+(d-1)/100D} \leq 2^{-1+1/100}+ 2^{-3/2+1/100} <1. $$
So (\ref{eq1}) and (\ref{eq2}) together with Fact~\ref{fact2} imply that
$$ {\rm MIS} (G)  \leq {\rm MIS} _G (x) + {\rm MIS} _G (y) < 2^{n/2-k/(100D^2)},$$ as desired.

\smallskip

\noindent 
\textbf{Case 3:} $\delta (G) \geq 4$.\\
Let $v \in V(G)$ be the vertex of smallest degree in $G$ and write $\deg_G(v)=i-1\geq 4$. Given any $w \in N_G(v) \cup \{v\}$ let $G':=G\setminus(N_G(w) \cup \{w\})$. So 
$|G'|=n-\deg_G(w)-1$, $e(G') \geq n/2+(k-\deg_G(w)D)\geq |G'|/2 + (k-\deg_G(w)D)$ and $\Delta (G') \leq D$. Hence by induction and Fact~\ref{fact1}
$${\rm MIS} _G (w)\leq {\rm MIS}(G') \leq 2^{(n-\deg_G(w)-1)/2-(k-\deg_G(w)D)/100D^2)} \leq 2^{(n-i)/2 -(k-iD)/(100D^2)}.$$
Thus by Fact~\ref{fact2} we have that
$${\rm MIS} (G) \leq i \times 2^{(n-i)/2 -(k-iD)/(100D^2)} \leq (i2^{-i/2+i/100}) 2^{n/2-k/(100D^2)} < 2^{n/2-k/(100D^2)},$$ as desired.
(Here we used that for $i\geq 5$, $i2^{-i/2+i/100}<1$.)

\smallskip

\noindent 
\textbf{Case 4:} $\delta (G) = 2$ and there exist  $v,w\in V(G)$ such that $\deg_G(v)=2$, $\deg _G (w) \geq 3$ and  $vw \in E(G)$.\\
By arguing as before (using induction and Facts~\ref{fact1} and~\ref{fact2}) we have that
\begin{align*}
{\rm MIS} (G) & \leq {\rm MIS}_G (v)+\sum _{u \in N_G(v)} {\rm MIS}_G (u) \leq 2 \times 2^{(n-3)/2-(k-2D)/(100D^2)} + 2^{(n-4)/2-(k-3D)/(100D^2)}
\\ & < 2^{n/2-k/(100D^2)},
\end{align*} as desired. (Here we have used that $2 \cdot 2^{-3/2+1/50} +2^{-2+3/100} <1$.) 

\smallskip

Cases~1--4 imply that we may now assume that $G$ consists precisely of $2$-regular components and components of minimum degree at least $3$.

\noindent 
\textbf{Case 5:} There exist  $v,w\in V(G)$ such that $\deg_G(v)=3$, $\deg _G (w) \geq 4$ and  $vw \in E(G)$.\\
By arguing similarly to  before (using induction and Facts~\ref{fact1} and~\ref{fact2}) we have that
\begin{align*}
{\rm MIS} (G) & \leq {\rm MIS}_G (v)+\sum _{u \in N_G(v)} {\rm MIS}_G (u) \leq 3 \times 2^{(n-4)/2-(k-3D)/(100D^2)} + 2^{(n-5)/2-(k-4D)/(100D^2)}
\\ & < 2^{n/2-k/(100D^2)},
\end{align*}
 as desired. (Here we have used that $3 \cdot 2^{-2+3/100} +2^{-5/2+1/25} <1$.) 

\smallskip

We may now assume that $G$ consists only of $2$- and $3$-regular components and components of minimum degree at least $4$.
However, if there is a component of minimum degree at least $4$ then by arguing precisely as in  Case~3, we obtain that ${\rm MIS}(G) \leq 2^{n/2-k/(100D^2)}$. So we may now assume $G$ consists of $2$- and $3$-regular components only.

\smallskip

\noindent 
\textbf{Case 6:} $G$ contains a $3$-regular component.\\
Here we  use the fact that ${\rm MIS}(G) \leq {\rm MIS}(G\setminus \{v\})+{\rm MIS}(G\setminus (N_G (v) \cup \{v\}))$ for any $v \in V(G)$.
Indeed, by induction we have 
$${\rm MIS}(G) \leq 2^{(n-1)/2-(k-5/2)/(100D^2)}+2^{(n-4)/2-(k-7)/(100D^2)} < 2^{n/2-k/(100D^2)},$$
as desired. (Here we have used that $2^{-1/2+1/40}+2^{-2+7/100}<1$.)

\smallskip

\noindent 
\textbf{Case 7:} $G$ is $2$-regular.\\
Since $G$ is triangle-free, Corollary~\ref{cycle} implies that ${\rm MIS}(G) \leq 2^{0.49n} \leq 2^{n/2-k/(100D^2)}$, as desired.
\qed

Finally, we show that Corollary~\ref{use} follows from Lemma~\ref{dense}.

\noindent
{\bf Proof of Corollary~\ref{use}.}
Every maximal independent set in $G$ can be obtained in the following two steps: 

(1) Choose an independent set $S\subseteq T$.

(2) Extend $S$ in $V(G)\setminus T=V(G')$, i.e.~choose a set $R\subseteq V(G')$ such that $R\cup S$ is a maximal independent set in $G$.

Note that although every maximal independent set in $G$ can be obtained in this way, it is not necessarily the case that given an arbitrary independent set $S \subseteq T$, there exists a set $R \subseteq V(G')$ such that $R \cup S$ is  a maximal independent set in $G$.
Notice that if $R\cup S$ is maximal,  $R$ is also a maximal independent set in $G'':=G\setminus (T\cup N_G(S))$. The number of choices for $S$ in (1) is at most $2^{|T|}$. 
Note that $G''$ is triangle-free, $\Delta (G'') \leq D$ and $e(G'') \geq e(G') - |T|D^2 \geq |G''|/2+ (k-|T|D^2)$. Thus, Lemma~\ref{dense} implies that 
 the number of extensions in (2) is at most $2^{n/2-(k-|T|D^2)/(100D^2)}$. Therefore, we have ${\rm{MIS}}(G)\le 2^{|T|}\cdot 2^{n/2-(k-|T|D^2)/(100D^2)}$, as desired.
\qed

 \end{appendix}


\begin{thebibliography}{99}
%\bibitem{alon} N. Alon,
%\newblock Independent sets in regular graphs and sum-free subsets of finite groups,
%\newblock {\em  Israel J. Math.}, 73, (1991), 247--256.

%\bibitem{abms}
% N.~Alon, J.~Balogh, R.~Morris and W.~Samotij,
% \newblock A refinement of the {C}ameron-{E}rd{\H o}s conjecture,
%\newblock  {\em  Proc. London Math. Soc.}, 108, (2014), 44--72.


\bibitem{BLPS}
 J.~Balogh, H.~Liu, S. Pet\v{r}\'{i}\v{c}kov\'{a} and M.~Sharifzadeh,
 \newblock The typical structure of maximal triangle-free graphs,
\newblock {\em Forum of Mathematics, Sigma}, 3, (2015), e20.


\bibitem{BLST}
 J.~Balogh, H.~Liu, M.~Sharifzadeh and A.~Treglown,
 \newblock The number of maximal sum-free subsets of integers,
\newblock  {\em  Proc. Amer. Math. Soc.}, 143, (2015), 4713--4721.

\bibitem{BMS}
 J.~Balogh, R.~Morris and W.~Samotij,
 \newblock Independent sets in hypergraphs,
\newblock  {\em  J. Amer. Math. Soc.}, 28, (2015),  669--709.

\bibitem{sar}
 J.~Balogh and S.~Pet\v{r}\'i\v{c}kov\'a,
 \newblock The number of the maximal triangle-free graphs,
\newblock {\em Bull. London Math. Soc.}, 46, (2014),  1003--1006.
%\bibitem{calk}
%N. J. Calkin, 
%\newblock On the number of sum-free sets,
%\newblock {\em Bull. London Math. Soc.}, 22
%(1990), 141--144.


\bibitem{cam1}
P.~Cameron and P.~Erd{\H{o}}s,
\newblock On the number of sets of integers
with various properties,
\newblock in Number
Theory (R.A. Mollin, ed.), 61--79, Walter de Gruyter, Berlin, 1990.

\bibitem{CE}
P.~Cameron and P.~Erd{\H{o}}s,
\newblock Notes on sum-free and related sets,
\newblock {\em Combin. Probab. Comput.}, 8, (1999), 95--107.


       

\bibitem{DFST}
J.~Deshouillers, G.~Freiman, V.~S{\'o}s and M.~Temkin,
\newblock On the structure of sum-free sets {II},
\newblock {\em Ast\'erisque}, 258, (1999), 149--161.     

\bibitem{yan} P.H. Diananda and H.P. Yap, Maximal sum-free sets of elements of finite groups, \emph{Proc. Japan Acad.}, {\bf 45}, (1969), 1--5.

\bibitem{ekr} P. Erd\H{o}s, D.J. Kleitman and B.L. Rothschild, Asymptotic enumeration of $K_n$-free graphs, Colloquio
Internazionale sulle Teorie Combinatorie (Rome, 1973), Tomo II, 19--27. Atti dei Convegni Lincei, No.
17, Accad. Naz. Lincei, Rome, 1976.


\bibitem{fur} Z. F\"uredi,
The number of maximal independent sets in connected graphs, \emph{J. Graph Theory}, {\bf 11}, (1987), 463--470.



      
\bibitem{G-CE}
B.~Green,
\newblock The {C}ameron-{E}rd{\H o}s conjecture,
\newblock {\em Bull. London Math. Soc.}, 36, (2004), 769--778.   

 



\bibitem{G-R}
B.~Green,
\newblock A {S}zemer\'edi-type regularity lemma in abelian groups, with
              applications,
\newblock {\em Geom. Funct. Anal.}, 15, (2005), 340--376. 


\bibitem{GM}
B.~Green, R.~Morris,
\newblock Counting sets with small sumset and applications,
\newblock {\em Combinatorica}, 36, (2016), 129--159.  

%\bibitem{new}
%B. Green and I. Ruzsa,
%\newblock
%Counting sumsets and sum-free sets modulo a prime,
%\newblock {\em Studia Sci. Math. Hungarica}, 41, (2004), 285--293. 

\bibitem{GR-g}
B. Green and I. Ruzsa,
\newblock
Sum-free sets in abelian groups,
\newblock {\em Israel J. Math.},  147, (2005), 157--189.   


\bibitem{HT}
M.~Hujter and Z.~Tuza,
\newblock The number of maximal independent sets in triangle-free
              graphs,
\newblock {\em SIAM J. Discrete Math.}, 6, (1993), 284--288.

%\bibitem{IK}
%L.~Ilinca and J.~Kahn,
%\newblock Counting maximal antichains and independent sets,
%\newblock {\em Order}, 30, (2013), 427--435.



%\bibitem{ksv} D. Kr\'al', O. Serra and L. Vena, 
%\newblock A combinatorial proof of the Removal Lemma for groups, 
%\newblock {\em J.  Combin. Theory A}, 116, (2009), 971--978.

\bibitem{ls} T. \L{u}czak and T. Schoen, 
\newblock On the number of maximal sum-free sets,
\newblock {\em Proc. Amer. Math. Soc.}, 129, (2001), 2205--2207.

\bibitem{MM}
J.W.~Moon and L.~Moser,
\newblock On cliques in graphs,
\newblock {\em Israel J. Math.}, 3, (1965), 23--28.

\bibitem{sap} A.A.~Sapozhenko,
\newblock The {C}ameron-{E}rd{\H o}s conjecture, (Russian)
\newblock {\em Dokl. Akad. Nauk.}, 
393,
(2003), 
749--752.


%\bibitem{S}
%A. A.~Sapozhenko,
%\newblock The number of independent sets in graphs,
%\newblock {\em Moscow Univ. Math. Bull.}, 62, (2004), 116--118.




\bibitem{ST}
D.~Saxton and A.~Thomason,
\newblock Hypergraph containers,
\newblock  {\em Invent. math.}, 201, (2015), 925--992.

%\bibitem{schur}  I. Schur, 
%\newblock Uber die Kongruenz $x^m +y^m \equiv z^m$ (mod $p$),
%\newblock {\em ber. Deutsch. Mat. Verein.}, 25,
%(1916), 114--117.

\bibitem{wolf} G. Wolfovitz,
\newblock Bounds on the number of maximal sum-free sets,
\newblock {\em European J. Combin.}, 30, (2009), 1718--1723.
\end{thebibliography}
\end{document}